\def\E{{\mathbb E }}
\def\goin{\to\infty}
\def\Bbb E{\mathbb{E}}
\def\Bbb R{\mathbb{R}}
\newtheorem{proposition}{Proposition}
\newtheorem{lemma}{Lemma}
\newtheorem{theorem}{Theorem}
\newtheorem{corollary}{Corollary}
\newtheorem{remark}{Remark}
\makeatletter \@addtoreset{equation}{section}
\font\tencmmib=cmmib10 \skewchar\tencmmib '60
\font\tenmsb=msbm10 
\def\Bbb#1{\hbox{\tenmsb#1}}
\def\bbox{\quad\hbox{\vrule \vbox{\hrule \vskip2pt \hbox{\hskip2pt
\vbox{\hsize=1pt}\hskip2pt} \vskip2pt\hrule}\vrule}}
\def\lessim{\ \lower4pt\hbox{$
\buildrel{\displaystyle <}\over\sim$}\ }
\def\gessim{\ \lower4pt\hbox{$\buildrel{\displaystyle >}
\over\sim$}\ }
\def\goin{\to \infty}
\def\go0{\to 0}
\def\leftitem#1{\item{\hbox to\parindent{\enspace#1\hfill}}}
\def\qed{{$\hfill \bbox$}}
\def\sg{\sigma}
\def\sg2{\sigma^2}
\def\__{_{\infty}}
\numberwithin{equation}{section} \theoremstyle{plain}
\newcommand{\1}{{\rm 1}\kern-0.24em{\rm I}}
\def\E{\mathbb E}
\def\R{\mathbb R}
\newtheorem{assumption}{Assumption}
\begin{document}

\begin{frontmatter}
\title{New asymptotic results in principal component analysis}\runtitle{Asymptotics in principal component analysis}

\begin{aug}
\author{\fnms{Vladimir} \snm{Koltchinskii}\thanksref{t1}\ead[label=e1]{vlad@math.gatech.edu}} and 
\author{\fnms{Karim} \snm{Lounici}\thanksref{m1}\ead[label=e2]{klounici@math.gatech.edu}}
\thankstext{t1}{Supported in part by NSF Grants DMS-1509739, DMS-1207808, CCF-1523768 and CCF-1415498}
\thankstext{m1}{Supported in part by Simons Grant 315477 and NSF Career Grant, DMS-1454515}
\runauthor{V. Koltchinskii and K. Lounici}

\affiliation{Georgia Institute of Technology\thanksmark{m1}}

\address{School of Mathematics\\
Georgia Institute of Technology\\
Atlanta, GA 30332-0160\\
\printead{e1}\\
\printead*{e2}
}
\end{aug}

\begin{abstract}
Let $X$ be a mean zero Gaussian random vector in a separable Hilbert space 
${\mathbb H}$ with covariance operator $\Sigma:={\mathbb E}(X\otimes X).$
Let $\Sigma=\sum_{r\geq 1}\mu_r P_r$ be the spectral decomposition 
of $\Sigma$ with distinct eigenvalues $\mu_1>\mu_2> \dots$ and 
the corresponding spectral projectors $P_1, P_2, \dots.$ 
Given a sample $X_1,\dots, X_n$ of size $n$ of i.i.d. copies of $X,$ the sample 
covariance operator is defined as $\hat \Sigma_n := n^{-1}\sum_{j=1}^n X_j\otimes X_j.$
The main goal of principal component analysis is to estimate spectral projectors $P_1, P_2, \dots$
by their empirical counterparts $\hat P_1, \hat P_2, \dots$ properly defined in terms of 
spectral decomposition of the sample covariance operator $\hat \Sigma_n.$
The aim of this paper is to study asymptotic distributions of important statistics 
related to this problem, in particular, of statistic $\|\hat P_r-P_r\|_2^2,$ where $\|\cdot\|_2^2$
is the squared Hilbert--Schmidt norm. This is done in a ``high-complexity" asymptotic 
framework in which the so called effective rank ${\bf r}(\Sigma):=\frac{{\rm tr}(\Sigma)}{\|\Sigma\|_{\infty}}$
(${\rm tr}(\cdot)$ being the trace and $\|\cdot\|_{\infty}$ being the operator norm) of the true covariance 
$\Sigma$ is becoming large simultaneously with the sample size $n,$ but ${\bf r}(\Sigma)=o(n)$
as $n\to\infty.$  In this setting, we prove that, in the case of one-dimensional spectral projector
$P_r,$ the properly centered and normalized statistic $\|\hat P_r-P_r\|_2^2$ with {\it data-dependent}
centering and normalization converges in distribution to a Cauchy type limit. 
The proofs of this and other related results rely on perturbation analysis and Gaussian concentration.    
 \end{abstract}

\begin{keyword}[class=AMS]
\kwd[Primary ]{62H12} 
\end{keyword}

\begin{keyword}
\kwd{Sample covariance} \kwd{Spectral projectors} \kwd{Effective rank} \kwd{Principal Component Analysis}
\kwd{Concentration inequalities} \kwd{Asymptotic distribution} \kwd{Perturbation theory}
\end{keyword}

\end{frontmatter}

\section{Introduction}

Let $X,X_1,\dots, X_n, \dots$ be i.i.d. random variables sampled from a Gaussian distribution in 
a separable Hilbert space ${\mathbb H}$ with zero mean and covariance operator $\Sigma:={\mathbb E}X\otimes X$
and let $\hat \Sigma=\hat \Sigma_n := n^{-1}\sum_{j=1}^n X_j\otimes X_j$ denote the sample 
covariance operator based on $(X_1,\dots, X_n).$\footnote{Given $u,v\in {\mathbb H},$
the tensor product $u\otimes v$ is a rank one linear operator defined as $(u\otimes v)x=u\langle v,x\rangle, x\in {\mathbb H}.$}  We will be interested in asymptotic properties 
of several statistics related to spectral projectors of sample covariance $\hat \Sigma$ (empirical 
spectral projectors) that could be potentially useful in principal component analysis (PCA) and its infinite dimensional versions such as functional PCA (see, e.g., \cite{Ramsay}) or kernel PCA in machine learning (see, e.g., \cite{Scholkopf}, \cite{Blanchard}). 

In the classical setting of a finite-dimensional space ${\mathbb H}={\mathbb R}^p$ of a fixed dimension $p,$ the large sample asymptotics of spectral characteristics of sample covariance were studied by 
Anderson  \cite{Anderson} who derived the joint asymptotic distribution of the sample eigenvalues and the associated sample eigenvectors (see also Theorem 13.5.1 in \cite{Andersonbook}). Later on,
similar results were established in the infinite-dimensional case (see, e.g., \cite{DPR}). Such an 
extension is rather straightforward provided that the ``complexity of the problem" characterized 
by such parameters as the trace ${\rm tr}(\Sigma)$ of the covariance operator $\Sigma$ remains 
fixed when the sample size $n$ tends to infinity. 

In the high-dimensional setting, when the dimension $p$ of the space grows simultaneously 
with the sample size $n,$ the problem has been primarily studied for so called 
{\it spiked covariance models} introduced by Johnstone and co-authors (see, e.g., \cite{Johnstone_Lu}). In this case, the covariance $\Sigma$ has a special 
structure, namely,  
$$\Sigma= \sum_{j=1}^m \lambda_j^2 (\theta_j\otimes \theta_j)+ \sigma^2 I_p,$$ 
where $m<p,$ $\theta_1, \dots, \theta_m$ are orthonormal vectors (``principal components"),
$\lambda_1^2>\dots >\lambda_m^2>0,$ $\sigma^2>0$ and $I_p$ is the $p\times p$
identity matrix. This means that the observation $X$ can be represented as
\footnote{assuming that the orthonormal vectors $\theta_1,\dots, \theta_m$ are extended to an 
orthonormal basis $\theta_1,\dots, \theta_p$ of ${\mathbb R}^p$} 
$$
X=\sum_{j=1}^m \lambda_j \xi_j \theta_j + \sigma \sum_{j=1}^p \eta_j \theta_j,
$$ 
where $\xi_j, \eta_j, j\geq 1$ are i.i.d. standard normal random variables.
Thus, $X$ can be viewed as an observation of a ``signal"  $\sum_{j=1}^m \lambda_j \xi_j \theta_j,$
consisting of $m$ ``spikes",  in an independent Gaussian white noise.
For such models, an elegant asymptotic theory has been developed 
based on the achievements of random matrix theory (see, e.g., the results of Paul \cite{Paul_2007} 
on asymptotics of eigenvectors of sample covariance in spiked covariance models and references therein).
The most interesting results were obtained in the case when $\frac{p}{n}\to c$ for 
some constant $c\in (0,+\infty).$ In this case, however, the eigenvectors of the sample covariance 
$\hat \Sigma_n$ fail to be consistent estimators of the eigenvectors of the true covariance
$\Sigma$ (see Johnstone and Lu \cite{Johnstone_Lu}) and this difficulty could not be 
overcome without further assumptions on the true eigenvectors such as, for instance,
their sparsity. This led to the development of various approaches to ``sparse PCA" 
(see, e.g., \cite{Johnstone_Lu_arxiv,Lounici2013,Ma2013,Paul_Johnstone2012,VuLei2012, Birnbaumetal} and references therein).

In this paper, we follow a somewhat different path.
It is well known that to ensure 
consistency of empirical spectral projectors as statistical estimators of spectral projectors 
of the true covariance $\Sigma$ one has to establish convergence of $\hat \Sigma$ to $\Sigma$
in the operator norm. In what follows, $\|\cdot\|_{\infty}$ will denote the operator norm (for bounded 
operators in ${\mathbb H}$), $\|\cdot\|_2$ will denote the Hilbert--Schmidt norm and $\|\cdot \|_1$
will denote the nuclear norm.  
We also 
use the notation ${\rm tr}(\Sigma)$ for the trace of $\Sigma$ and set 
$${\bf r}(\Sigma):=\frac{{\rm tr}(\Sigma)}{\|\Sigma\|_{\infty}}.$$ 
The last quantity is always dominated by the rank of operator $\Sigma$ and 
it is sometimes referred to as its {\it effective rank}. It was pointed out by Vershynin \cite{Vershynin}
that the effective rank could be used to provide non-asymptotic upper bounds on the size 
of the operator norm $\|\hat \Sigma-\Sigma\|_{\infty}$ with rather weak (logarithmic) dependence
on the dimension and this approach was later used in statistical literature (see \cite{BuneaXiao,
Lounici2012}). In our paper \cite{Koltchinskii_Lounici_arxiv}, we proved that in the Gaussian 
case the size of the operator norm $\|\hat \Sigma-\Sigma\|_{\infty}$ can be completely 
characterized in terms of the effective rank ${\bf r}(\Sigma)$ of the true covariance $\Sigma$
and its operator norm $\|\Sigma\|_{\infty}$ and that the resulting non-asymptotic bounds 
are dimension-free (see theorems \ref{th_operator} and \ref{spectrum_sharper} below). 
This shows that $\hat \Sigma$ is an operator norm consistent estimator of $\Sigma$
provided that ${\bf r}(\Sigma)=o(n),$ which makes the effective rank $\mathbf{ r}(\Sigma)$
an important complexity parameter of the covariance estimation problem. 
This also provides a dimension-free framework for 
such problems and allows one to study them in a ``high-complexity" case (that is, when the effective
rank ${\bf r}(\Sigma)$ could be large) without imposing any structural assumptions on the true covariance 
such as, for instance, spiked covariance models \cite{Johnstone_Lu}. This approach has been 
developed in some detail in our recent papers \cite{Koltchinskii_Lounici_arxiv}, \cite{Koltchinskii_Lounici_bilinear}, \cite{ Koltchinskii_Lounici_HS}. 
The current paper continues this line of work by studying the asymptotic behavior
of several important statistics under the assumptions that both $n\to\infty$ and ${\bf r}(\Sigma)\to \infty,
{\bf r}(\Sigma)=o(n).$ This includes statistical estimators of bias of spectral projectors of $\hat \Sigma$
(empirical spectral projectors) as well as their squared Hilbert-Schmidt norm errors with a goal 
to develop ``studentized versions" of these statistics that could be (in principle) used for 
statistical inference.   
Before stating our main results, we provide in the next section a review of the results of 
papers \cite{Koltchinskii_Lounici_arxiv}, \cite{Koltchinskii_Lounici_bilinear}, \cite{ Koltchinskii_Lounici_HS}
that will be extensively used in what follows.

Throughout the paper, we write $A\lesssim B$ iff $A\leq CB$ for some absolute constant $C>0$
($A,B\geq 0$). $A\gtrsim B$ is equivalent to $B\lesssim A$ and $A\asymp B$ is equivalent 
to $A\lesssim B$ and $A\gtrsim B.$ Sometimes, the signs $\lesssim, \gtrsim$ and $\asymp$
could be provided with subscripts: for instance, $A\lesssim_{\gamma} B$ means that $A\leq CB$
with a constant $C$ that could depend on $\gamma.$ 

\section{Effective rank and concentration of empirical spectral projectors: a review of recent results}\label{sec:empirical}

The following recent result (see, \cite{Koltchinskii_Lounici_arxiv}) provides a complete characterization of the quantity 
${\mathbb E}\|\hat \Sigma-\Sigma\|_{\infty}$ in terms of the operator norm $\|\Sigma\|_{\infty}$
and the effective rank ${\bf r}(\Sigma)$ in the case of i.i.d. mean zero Gaussian observations. 

\begin{theorem}
\label{th_operator}
The following bound holds:
\begin{align}
\label{E1}
\E\|\hat\Sigma - \Sigma\|_{\infty} \asymp \|\Sigma\|_\infty \biggl[\sqrt{\frac{\mathbf{r}(\Sigma)}{n}}
\bigvee \frac{\mathbf{r}(\Sigma)}{n}\biggr].
\end{align}
\end{theorem}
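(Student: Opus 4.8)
The plan is to bound $\E\|\hat\Sigma-\Sigma\|_\infty$ from above and from below separately. The common starting point is the spectral-theorem identity (valid since $\hat\Sigma-\Sigma$ is self-adjoint)
\[
\|\hat\Sigma-\Sigma\|_\infty=\sup_{\|u\|\le 1}\Bigl|\frac1n\sum_{j=1}^n\bigl(\langle X_j,u\rangle^2-\langle\Sigma u,u\rangle\bigr)\Bigr|
\]
combined with the representation $X_j=\Sigma^{1/2}Z_j$, where $Z_1,\dots,Z_n$ are i.i.d.\ standard Gaussian on the range of $\Sigma$. Setting $v=\Sigma^{1/2}u$ turns the right-hand side into the supremum of a quadratic empirical process of linear functionals of the $Z_j$ over the ellipsoid $\mathcal E:=\Sigma^{1/2}(B_{\mathbb H})$; its two relevant parameters are the radius $d(\mathcal E)=\sup_{v\in\mathcal E}\|v\|=\|\Sigma\|_\infty^{1/2}$ and the Gaussian width $w(\mathcal E)=\E\|X\|$, which satisfies $w(\mathcal E)\le({\rm tr}(\Sigma))^{1/2}$ in general and $w(\mathcal E)\asymp({\rm tr}(\Sigma))^{1/2}$ once ${\bf r}(\Sigma)$ is large.

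For the upper bound the target is the ``matrix deviation'' estimate
\[
\E\sup_{v\in\mathcal E}\Bigl|\frac1n\sum_{j=1}^n\bigl(\langle Z_j,v\rangle^2-\|v\|^2\bigr)\Bigr|\ \lesssim\ \frac{w(\mathcal E)\,d(\mathcal E)}{\sqrt n}+\frac{w(\mathcal E)^2}{n},
\]
since substituting $w(\mathcal E)\le({\rm tr}(\Sigma))^{1/2}$ and $d(\mathcal E)=\|\Sigma\|_\infty^{1/2}$ immediately produces $\E\|\hat\Sigma-\Sigma\|_\infty\lesssim\|\Sigma\|_\infty\bigl(\sqrt{{\bf r}(\Sigma)/n}\vee{\bf r}(\Sigma)/n\bigr)$. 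I would prove the displayed estimate by generic chaining on the process $v\mapsto\frac1n\sum_j(\langle Z_j,v\rangle^2-\|v\|^2)$: its increments factor as $\langle Z,v-w\rangle\,\langle Z,v+w\rangle$, a product of two $\psi_2$ linear forms, hence have mixed sub-Gaussian/sub-exponential tails with $\psi_1$-scale $\|v-w\|\,\|v+w\|$ and local $\psi_2$-scale $\asymp(\|v\|\vee\|w\|)\|v-w\|$. A mixed-tail generic chaining bound (Bednorz--Dirksen type) then yields a $\gamma_2(\mathcal E)/\sqrt n$ term controlled by $w(\mathcal E)$ via the majorizing-measures theorem, together with a diagonal term from the sub-exponential part; the point is to bound that diagonal term not by the $\gamma_1$-functional of $\mathcal E$ but by peeling off the bounded factor $\|v+w\|\le 2d(\mathcal E)$ and chaining only the remaining $\psi_2$-factor $\langle Z,v-w\rangle$, which reproduces the $w(\mathcal E)d(\mathcal E)/\sqrt n+w(\mathcal E)^2/n$ contribution.

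For the lower bound I would split into two regimes. If ${\bf r}(\Sigma)\le n$, let $\theta_1$ be the top eigenvector; discarding the component along $\theta_1$ gives
\[
\|\hat\Sigma-\Sigma\|_\infty\ \ge\ \bigl\|(\hat\Sigma-\Sigma)\theta_1\bigr\|\ \ge\ \Bigl\|\frac1n\sum_{j=1}^n\langle X_j,\theta_1\rangle\,P_{\theta_1^{\perp}}X_j\Bigr\| ,
\]
and since $\langle X_j,\theta_1\rangle\sim N(0,\mu_1)$ is independent of $P_{\theta_1^{\perp}}X_j\sim N(0,\Sigma-\mu_1\theta_1\otimes\theta_1)$, conditioning on the scalars $\langle X_j,\theta_1\rangle$ and using $\E\|G\|\asymp(\E\|G\|^2)^{1/2}$ for a centred Gaussian $G$ in $\mathbb H$ yields expectation $\asymp\bigl(\mu_1({\rm tr}(\Sigma)-\mu_1)/n\bigr)^{1/2}$, which is $\asymp\|\Sigma\|_\infty\sqrt{{\bf r}(\Sigma)/n}$ when ${\bf r}(\Sigma)\ge 2$ and otherwise is matched by the $\|\Sigma\|_\infty/\sqrt n$-sized fluctuation of $\langle(\hat\Sigma-\Sigma)\theta_1,\theta_1\rangle$. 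If ${\bf r}(\Sigma)\ge 2n$, take $e:=X_1/\|X_1\|$ and discard the nonnegative terms with $j\neq1$:
\[
\|\hat\Sigma-\Sigma\|_\infty\ \ge\ \langle(\hat\Sigma-\Sigma)e,e\rangle\ \ge\ \frac1n\|X_1\|^2-\|\Sigma\|_\infty ,
\]
whose expectation is ${\rm tr}(\Sigma)/n-\|\Sigma\|_\infty\gtrsim\|\Sigma\|_\infty{\bf r}(\Sigma)/n$; the thin strip $n<{\bf r}(\Sigma)<2n$ needs nothing new since there $\sqrt{{\bf r}(\Sigma)/n}\asymp{\bf r}(\Sigma)/n\asymp 1$.

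I expect the one genuinely delicate point to be the diagonal/sub-exponential part of the chaining in the upper bound: in an infinite-dimensional $\mathbb H$ the ellipsoid $\mathcal E$ may have infinite $\gamma_1$-functional even when ${\rm tr}(\Sigma)<\infty$, so a direct mixed-tail chaining diverges, and the whole argument hinges on exploiting the product structure of the quadratic increments to replace the $\psi_1$-chaining over $\mathcal E$ by the bounded quantity $d(\mathcal E)$ times a $\psi_2$-chaining already absorbed into $w(\mathcal E)$. The remaining ingredients — the identities $w(\mathcal E)=\E\|X\|$ and $d(\mathcal E)=\|\Sigma\|_\infty^{1/2}$, the second-moment computations in the lower bound, and the reduction via self-adjointness — are routine.
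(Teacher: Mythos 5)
Theorem \ref{th_operator} is not proved in this paper: it is imported verbatim from \cite{Koltchinskii_Lounici_arxiv}, so there is no in-paper argument to compare against. Your proposal is essentially the argument of that reference: reduce $\|\hat\Sigma-\Sigma\|_{\infty}$ by self-adjointness to a quadratic empirical process indexed by the ellipsoid $\mathcal E=\Sigma^{1/2}(B_{\mathbb H})$, invoke a Mendelson-type product/quadratic-process bound $w(\mathcal E)d(\mathcal E)/\sqrt n + w(\mathcal E)^2/n$ together with $\gamma_2(\mathcal E)\asymp w(\mathcal E)=\E\|X\|$ from the majorizing-measures theorem for the upper estimate, and direct Gaussian computations for the lower estimate. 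Your lower bound is correct and complete as sketched: the orthogonal decomposition of $(\hat\Sigma-\Sigma)\theta_1$, the independence of $\langle X_j,\theta_1\rangle$ and $P_{\theta_1^\perp}X_j$ (which uses that $\theta_1$ is an eigenvector), the two-sided Gaussian norm comparison $\E\|G\|\asymp(\E\|G\|^2)^{1/2}$ (valid with absolute constants by Gaussian concentration), and the three-regime split all check out, including the degenerate case ${\bf r}(\Sigma)<2$ handled by the diagonal $\chi^2$ fluctuation. The one place where you have a plan rather than a proof is the chaining estimate for the quadratic process; you correctly identify the genuine obstruction (the $\gamma_1$-functional of an infinite-dimensional ellipsoid with summable eigenvalues can diverge, so naive mixed-tail chaining fails) and the correct remedy (exploit the factorization of increments into a factor bounded by $2d(\mathcal E)$ times a sub-Gaussian linear form), which is exactly the content of the Mendelson-type bound used in \cite{Koltchinskii_Lounici_arxiv}. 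Carrying that chaining argument out in full is nontrivial, but it is a known-to-work route and you have flagged it honestly; I see no gap in the overall structure.
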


In paper \cite{Koltchinskii_Lounici_arxiv}, it is also complemented by a concentration inequality for  
$\|\hat\Sigma - \Sigma\|_{\infty}$ around its expectation:

\begin{theorem}
\label{spectrum_sharper} 
There exists a constant 
$C>0$ such that for all $t\geq 1$ with probability at least $1-e^{-t},$ 
\begin{align}
 \label{con_con}
\Bigl|\|\hat\Sigma - \Sigma\|_{\infty}- {\mathbb E}\|\hat\Sigma - \Sigma\|_{\infty}\Bigr| \leq C\|\Sigma\|_\infty
\biggl[\biggl(\sqrt{\frac{\mathbf{r}(\Sigma)}{n}}\bigvee 1\biggr)\sqrt{\frac{t}{n}}\bigvee \frac{t}{n} 
\biggr].
\end{align}
\end{theorem}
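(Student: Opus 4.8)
The plan is to realize $\|\hat\Sigma-\Sigma\|_\infty$ as a function of a standard Gaussian vector and apply the Gaussian concentration inequality, the one wrinkle being that this function is merely \emph{locally} Lipschitz, with a Lipschitz constant controlled by the function itself. Using that $X$ is Gaussian, write $X_j=\Sigma^{1/2}Z_j$ where $Z_1,\dots,Z_n$ are i.i.d.\ standard Gaussian in an isometric copy of $\mathbb H$, so that, by the variational formula for the operator norm of a self-adjoint operator,
\[
F(Z_1,\dots,Z_n):=\|\hat\Sigma-\Sigma\|_\infty
=\sup_{\|u\|\le 1}\Bigl|\,\frac{1}{n}\sum_{j=1}^{n}\bigl(\langle Z_j,\Sigma^{1/2}u\rangle^2-\|\Sigma^{1/2}u\|^2\bigr)\Bigr|.
\]
As a supremum of smooth functions composed with $|\cdot|$, $F$ is locally Lipschitz, hence differentiable almost everywhere.

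At a point of differentiability, letting $u^\ast$ be an (almost) maximizing direction, the envelope theorem gives $\nabla_{Z_k}F=\pm\frac{2}{n}\langle Z_k,\Sigma^{1/2}u^\ast\rangle\,\Sigma^{1/2}u^\ast$, so that
\begin{align*}
\|\nabla F(Z)\|^2=\sum_{k=1}^{n}\|\nabla_{Z_k}F\|^2
&\le\frac{4\|\Sigma\|_\infty}{n^2}\sum_{k=1}^{n}\langle Z_k,\Sigma^{1/2}u^\ast\rangle^2\\
&=\frac{4\|\Sigma\|_\infty}{n}\langle\hat\Sigma u^\ast,u^\ast\rangle
\le\frac{4\|\Sigma\|_\infty}{n}\bigl(\|\Sigma\|_\infty+F(Z)\bigr),
\end{align*}
where we used $\|\Sigma^{1/2}u^\ast\|^2\le\|\Sigma\|_\infty$ and $\langle\hat\Sigma u^\ast,u^\ast\rangle\le\|\hat\Sigma\|_\infty\le\|\Sigma\|_\infty+\|\hat\Sigma-\Sigma\|_\infty$. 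The envelope step is made rigorous by first replacing the unit ball by a finite $\tfrac12$-net, running the computation for the resulting maximum of finitely many smooth functions, and then letting the mesh tend to $0$.

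Having this ``self-bounding'' gradient estimate, I would convert it into concentration by truncation. For $L>0$ set $F_L:=F\wedge L$; by the chain rule $\|\nabla F_L\|\le\Lambda_L:=\frac{2}{\sqrt n}\sqrt{\|\Sigma\|_\infty(\|\Sigma\|_\infty+L)}$ a.e., so the Gaussian concentration inequality for Lipschitz functions gives $\mathbb P\bigl(|F_L-\mathbb E F_L|\ge s\bigr)\le 2e^{-s^2/2\Lambda_L^2}$ for all $s>0$. Since $0\le\mathbb E F-\mathbb E F_L\le\mathbb E[(F-L)_+]$, choosing $L$ of order $\mathbb EF+s$ lets one replace $F_L$ by $F$ and $\mathbb EF_L$ by $\mathbb EF$ up to absolute factors, provided one also controls $\mathbb P(F>L)$. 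This last tail estimate is produced by a short bootstrap over levels $L_k\asymp 2^k(\mathbb EF+s)$, in which the deviation bound already available at level $L_{k+1}$ controls $\mathbb P(F>L_k)$ and the geometric growth of the $L_k$ makes the accumulated exponents summable. The outcome is a two-regime tail: for every $t\ge 1$, with probability at least $1-e^{-t}$,
\[
\bigl|\,F-\mathbb E F\,\bigr|
\lesssim\sqrt{\Bigl(\frac{\|\Sigma\|_\infty^2}{n}+\frac{\|\Sigma\|_\infty}{n}\,\mathbb E F\Bigr)t}\ \bigvee\ \frac{\|\Sigma\|_\infty}{n}\,t.
\]

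It remains to substitute $\mathbb E F=\mathbb E\|\hat\Sigma-\Sigma\|_\infty\asymp\|\Sigma\|_\infty\bigl(\sqrt{\mathbf{r}(\Sigma)/n}\vee\mathbf{r}(\Sigma)/n\bigr)$ from Theorem~\ref{th_operator}. Then $\frac{\|\Sigma\|_\infty^2}{n}+\frac{\|\Sigma\|_\infty}{n}\mathbb E F\asymp\frac{\|\Sigma\|_\infty^2}{n}\bigl(1\vee\mathbf{r}(\Sigma)/n\bigr)$, so the sub-Gaussian term becomes $\|\Sigma\|_\infty\bigl(\sqrt{\mathbf{r}(\Sigma)/n}\vee 1\bigr)\sqrt{t/n}$ and the heavy-tail term is $\|\Sigma\|_\infty t/n$, which is precisely \eqref{con_con}. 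I expect the main obstacle to be the middle step: organizing the truncation and the bootstrap so that the exponent is genuinely of order $t$ \emph{uniformly for all} $t\ge 1$ and so that truncating does not corrupt the centering $\mathbb E\|\hat\Sigma-\Sigma\|_\infty$; a secondary but necessary technical point is the rigorous justification of the a.e.\ gradient identity for the supremum-plus-absolute-value functional $F$ (handled via the net approximation above together with Rademacher's theorem).
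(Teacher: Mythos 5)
This theorem is imported from \cite{Koltchinskii_Lounici_arxiv} and not proved in the present paper, but your sketch reproduces, correctly and in the right order, the argument used there: Gaussian concentration applied to a truncated version of the supremum functional, a self-bounding gradient estimate $\|\nabla F\|^2\lesssim n^{-1}\|\Sigma\|_{\infty}(\|\Sigma\|_{\infty}+F)$, and substitution of the two-regime bound for ${\mathbb E}\|\hat\Sigma-\Sigma\|_{\infty}$ from Theorem \ref{th_operator}. The two technical points you flag (uniformity of the truncation/bootstrap in $t$ and the a.e.\ gradient identity via a net) are exactly the ones that require care, and your final arithmetic recovering $\bigl(\sqrt{\mathbf{r}(\Sigma)/n}\vee 1\bigr)\sqrt{t/n}\vee t/n$ checks out.
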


It follows from (\ref{E1}) and (\ref{con_con}) that with some constant $C>0$ and with probability
at least $1-e^{-t}$ 
\begin{align}
\label{sha_sha}
\|\hat\Sigma - \Sigma\|_{\infty} \leq C\|\Sigma\|_\infty
\biggl[\sqrt{\frac{\mathbf{r}(\Sigma)}{n}} \bigvee \frac{\mathbf{r}(\Sigma)}{n} 
\bigvee \sqrt{\frac{t}{n}}\bigvee \frac{t}{n} 
\biggr],
\end{align}
which, in turn, implies that for all $p\geq 1$
\begin{align}
\label{E1_p}
\E^{1/p}\|\hat\Sigma - \Sigma\|_{\infty}^p \asymp_p
\|\Sigma\|_\infty \biggl[\sqrt{\frac{\mathbf{r}(\Sigma)}{n}}
\bigvee \frac{\mathbf{r}(\Sigma)}{n}\biggr].
\end{align}

These results showed that the sample covariance $\hat \Sigma$ is an operator norm consistent estimator of $\Sigma$ even in the cases when the effective rank ${\bf r}(\Sigma)$ becomes large as $n\to\infty,$
but ${\bf r}(\Sigma)=o(n)$ and $\|\Sigma\|_{\infty}$ remains bounded. Thus, it becomes of interest 
to study the behavior of spectral projectors of sample covariance $\hat \Sigma$ (that are of crucial importance in PCA) in such an asymptotic framework. This program has been partially implemented 
in papers \cite{Koltchinskii_Lounici_bilinear}, \cite{ Koltchinskii_Lounici_HS}. To state the main results 
of these papers (used in what follows), we will introduce some further definitions and notations. 

Let $\Sigma=\sum_{r\geq 1}\mu_r P_r$ be the spectral representation of covariance operator $\Sigma$
with distinct non zero eigenvalues $\mu_r, r\geq 1$ (arranged in decreasing order) and the corresponding 
spectral projectors $P_r, r\geq 1.$ Clearly, $P_r$ are finite rank projectors with ${\rm rank}(P_r)=:m_r$
being the multiplicity of the corresponding eigenvalue $\mu_r.$ Let $\sigma(\Sigma)$ be the spectrum 
of $\Sigma.$ Denote by $\bar g_r$ the distance from the eigenvalue $\mu_r$ to the rest of the spectrum 
$\sigma(\Sigma)\setminus \{\mu_r\}$ (the $r$-th ``spectral gap"). 
It will be also convenient to consider 
the non zero eigenvalues $\sigma_j(\Sigma), j\geq 1$ of $\Sigma$ arranged in nondecreasing order
and \it repeated with their multiplicities \rm  (in the case when the number of non zero eigenvalues 
is finite, we extend this sequence by zeroes).  With this notation, let $\Delta_r:=\{j: \sigma_j(\Sigma)=\mu_r\}, r\geq 1$ and denote by $\hat P_r$ the orthogonal projector onto the linear span of eigenspaces of $\hat \Sigma$ corresponding to its eigenvalues $\{\sigma_j(\hat \Sigma): j\in \Delta_r\}.$ It easily follows from a well known 
inequality due to Weyl that
$$
\sup_{j\geq 1}|\sigma_j(\hat \Sigma)-\sigma_j(\Sigma)|\leq \|\hat \Sigma-\Sigma\|_{\infty}.
$$
If $\|\hat \Sigma-\Sigma\|_{\infty}<\frac{\bar g_r}{2},$ this immediately implies that the eigenvalues 
$\{\sigma_j(\hat \Sigma):j\in \Delta_r\}$ form a ``cluster" that belongs to the interval 
$(\mu_r-\frac{\bar g_r}{2},\mu_r+\frac{\bar g_r}{2})$ and that is separated from the rest of the spectrum of $\hat \Sigma$ in the sense
that $\sigma_j(\hat \Sigma)\not\in (\mu_r-\frac{\bar g_r}{2},\mu_r+\frac{\bar g_r}{2})$ for all $j\not\in \Delta_r.$ 
In this case, $\hat P_r$ becomes a natural estimator of $P_r.$ It could be viewed as a random 
perturbation of $P_r$ and the following result, closely related to basic facts of perturbation theory
(see \cite{Kato1}), could be found in \cite{Koltchinskii_Lounici_bilinear} (see Lemmas 1 and 2 there). 

\begin{lemma}\label{lem-pert-spectral}
Let $E:=\hat \Sigma-\Sigma.$
The following bound holds: 
\begin{align}
\label{bd_1}
\|\hat P_r-P_r\|_\infty \leq 4\frac{\|E\|_{\infty}}{\bar g_r}.
\end{align}
Moreover, denote 
$$
C_r:= \sum_{s\neq r}\frac{1}{\mu_r-\mu_s}P_s.
$$
Then
\begin{equation}
\label{perture}
\hat P_r-P_r =L_r(E)+ S_r(E),
\end{equation}
where 
\begin{align}
\label{linear_perturb}
L_r(E):=C_r E P_r + P_r E C_r
\end{align}
and 
\begin{equation}
\label{remainder_A}
\|S_r(E)\|_{\infty}\leq 14 \biggl(\frac{\|E\|_{\infty}}{\bar g_r}\biggr)^2.
\end{equation}
\end{lemma}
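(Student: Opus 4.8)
The plan is to use the Riesz (contour--integral) representation of spectral projectors from classical perturbation theory (\cite{Kato1}). I first dispose of the large--$\|E\|_{\infty}$ regime: the difference of any two orthogonal projectors has operator norm at most $1$ (since $\langle(\hat P_r-P_r)x,x\rangle=\|\hat P_rx\|^{2}-\|P_rx\|^{2}\in[-1,1]$), so $\|\hat P_r-P_r\|_{\infty}\le 1$, and consequently $\|S_r(E)\|_{\infty}=\|\hat P_r-P_r-L_r(E)\|_{\infty}\le 1+2\|E\|_{\infty}/\bar g_r$ because $\|C_r\|_{\infty}=1/\bar g_r$. Hence (\ref{bd_1}) holds automatically as soon as $\|E\|_{\infty}\ge\bar g_r/4$, and (\ref{remainder_A}) holds automatically once $\|E\|_{\infty}/\bar g_r$ exceeds the positive root of $14x^{2}-2x-1=0$. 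It therefore suffices to prove both estimates under $\|E\|_{\infty}<c\,\bar g_r$ for a suitable small absolute constant $c$.

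In this regime, let $\gamma_r$ be the positively oriented circle of radius $\bar g_r/2$ centered at $\mu_r$. Since $\Sigma$ is self-adjoint and $\mathrm{dist}(\lambda,\gamma_r)\ge\bar g_r/2$ for every $\lambda\in\sigma(\Sigma)$, the resolvent $R(\zeta):=(\Sigma-\zeta I)^{-1}$ satisfies $\|R(\zeta)\|_{\infty}=\mathrm{dist}(\zeta,\sigma(\Sigma))^{-1}\le 2/\bar g_r$ on $\gamma_r$, and $P_r=-\frac{1}{2\pi i}\oint_{\gamma_r}R(\zeta)\,d\zeta$. Weyl's inequality $\sup_{j}|\sigma_j(\hat\Sigma)-\sigma_j(\Sigma)|\le\|E\|_{\infty}<\bar g_r/2$ shows that the eigenvalues of $\hat\Sigma$ indexed by $\Delta_r$ lie strictly inside $\gamma_r$ while all the others lie strictly outside, so $\hat P_r=-\frac{1}{2\pi i}\oint_{\gamma_r}(\hat\Sigma-\zeta I)^{-1}\,d\zeta$ with the \emph{same} contour, and (again by self-adjointness) $\|(\hat\Sigma-\zeta I)^{-1}\|_{\infty}=\mathrm{dist}(\zeta,\sigma(\hat\Sigma))^{-1}\le(\bar g_r/2-\|E\|_{\infty})^{-1}$ for $\zeta\in\gamma_r$.

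For (\ref{bd_1}) I would use the one-step resolvent identity $(\hat\Sigma-\zeta)^{-1}-R(\zeta)=-R(\zeta)E(\hat\Sigma-\zeta)^{-1}$, which yields
\[
\hat P_r-P_r=\frac{1}{2\pi i}\oint_{\gamma_r}R(\zeta)\,E\,(\hat\Sigma-\zeta)^{-1}\,d\zeta,
\]
and then bound the right-hand side in operator norm by $\frac{1}{2\pi}\,\mathrm{length}(\gamma_r)\,\|E\|_{\infty}\,\sup_{\zeta\in\gamma_r}\|R(\zeta)\|_{\infty}\,\|(\hat\Sigma-\zeta)^{-1}\|_{\infty}\le\frac{\bar g_r}{2}\cdot\frac{2}{\bar g_r}\cdot\|E\|_{\infty}\cdot\frac{1}{\bar g_r/2-\|E\|_{\infty}}$, which is $\le 4\|E\|_{\infty}/\bar g_r$ once $\|E\|_{\infty}\le\bar g_r/4$. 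For (\ref{perture})--(\ref{remainder_A}) I iterate the identity once more,
\[
(\hat\Sigma-\zeta)^{-1}=R(\zeta)-R(\zeta)ER(\zeta)+R(\zeta)ER(\zeta)E(\hat\Sigma-\zeta)^{-1},
\]
integrate over $\gamma_r$, and evaluate the linear term by substituting the spectral decomposition of $R$: the scalar contour integrals $\frac{1}{2\pi i}\oint_{\gamma_r}\frac{d\zeta}{(\mu_s-\zeta)(\mu_l-\zeta)}$ vanish unless exactly one of $s,l$ equals $r$, in which case they equal $(\mu_r-\mu_l)^{-1}$, resp.\ $(\mu_r-\mu_s)^{-1}$; collecting the terms identifies the linear contribution as $P_rEC_r+C_rEP_r=L_r(E)$ and leaves $S_r(E)=-\frac{1}{2\pi i}\oint_{\gamma_r}R(\zeta)ER(\zeta)E(\hat\Sigma-\zeta)^{-1}\,d\zeta$. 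The same length-times-supremum bound now gives $\|S_r(E)\|_{\infty}\le\frac{\bar g_r}{2}\cdot\frac{4}{\bar g_r^{2}}\cdot\|E\|_{\infty}^{2}\cdot\frac{1}{\bar g_r/2-\|E\|_{\infty}}$, which is $\le 14(\|E\|_{\infty}/\bar g_r)^{2}$ for $\|E\|_{\infty}/\bar g_r$ below the threshold fixed in the first paragraph, the complementary range having already been dealt with.

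I expect the only genuine obstacle to be the uniform control of $\|(\hat\Sigma-\zeta I)^{-1}\|_{\infty}$ along $\gamma_r$: one must invoke Weyl's inequality to guarantee that the perturbed spectrum stays clear of the contour, and then calibrate the contour radius and the threshold separating the ``perturbative'' from the ``trivial'' regime so that the resulting absolute constants come out exactly as $4$ and $14$. The residue computation identifying $L_r(E)$ and the length-times-supremum estimate of $S_r(E)$ are otherwise routine.
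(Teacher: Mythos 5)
Your proof is correct, and it follows the same route as the source: the paper itself does not prove Lemma \ref{lem-pert-spectral} but cites Lemmas 1--2 of \cite{Koltchinskii_Lounici_bilinear}, whose argument (as the contour-integral series for $S_r(E)$ quoted in Lemma \ref{lem:pert-representations} makes clear) is exactly the Riesz resolvent expansion over the circle $\gamma_r$ of radius $\bar g_r/2$ that you use. Your constants also check out: the perturbative bound $2(\|E\|_\infty/\bar g_r)^2\,\bar g_r/(\bar g_r/2-\|E\|_\infty)\le 14(\|E\|_\infty/\bar g_r)^2$ holds for $\|E\|_\infty\le \tfrac{5}{14}\bar g_r$, which overlaps the trivial regime starting at the root $(1+\sqrt{15})/14<\tfrac{5}{14}$, so the two cases cover everything.
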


\begin{remark}
In the case when $0$ is an eigenvalue of $\Sigma,$ it is convenient to extend the sum 
in the definition of operator $C_r$ to $s=\infty$ with $\mu_{\infty}=0$ (see, for instance,
the proof of Lemma \ref{lem:pert-representations}). Note, however, that $P_{\infty}\Sigma = \Sigma P_{\infty}=0$ and $P_{\infty}\hat \Sigma=\hat \Sigma P_{\infty}=0.$ Thus, this additional 
term in the definition of $C_r$ does not have any impact on $L_r(E)$ (and on the parameters
$A_r(\Sigma), B_r(\Sigma)$ introduced below). 
\end{remark}

This result essentially shows that the difference $\hat P_r-P_r$ can be represented as a sum 
of two terms, a linear term with respect to $E=\hat \Sigma-\Sigma$ denoted by $L_r(E)$ and 
the remainder term $S_r(E)$ for which bound (\ref{remainder_A}) (quadratic with respect 
to $\|E\|_{\infty}^2$) holds. The linear term $L_r(E)$ could be further represented as a sum 
of i.i.d. mean zero random operators:
$$
L_r(E)=n^{-1}\sum_{j=1}^n (P_r X_j\otimes C_r X_j+ C_r X_j\otimes P_r X_j),
$$
which easily implies simple concentration bounds and asymptotic normality results 
for this term. On the other hand, it follows from theorems \ref{th_operator} and \ref{spectrum_sharper}
that with probability at least $1-e^{-t}$
$$
\|S_r(E)\|_{\infty}\lesssim \frac{\|\Sigma\|_{\infty}^2}{\bar g_r^2}\biggl[\frac{\mathbf{r}(\Sigma)}{n} \bigvee \biggl(\frac{\mathbf{r}(\Sigma)}{n}\biggr)^2 
\bigvee \frac{t}{n}\bigvee \biggl(\frac{t}{n}\biggr)^2 
\biggr],
$$
implying that $\|S_r(E)\|_{\infty}=o_{\mathbb P}(1)$ under the assumption ${\bf r}(\Sigma)=o(n)$
and $\|S_r(E)\|_{\infty}=o_{\mathbb P}(n^{-1/2})$ under the assumption ${\bf r}(\Sigma)=o(n^{1/2})$
(in both cases, provided that $\frac{\|\Sigma\|_{\infty}}{\bar g_r}$ remains bounded). Bound on the remainder 
term $S_r(E)$ of the order $o_{\mathbb P}(n^{-1/2})$ makes this term negligible if the linear term $L_r(E)$ 
converges to zero with the rate $O_{\mathbb P}(n^{-1/2})$ (the standard rate of the central limit theorem).
A more subtle analysis of bilinear forms $\langle S_r(E)u,v\rangle, u,v\in {\mathbb H}$ given in \cite{Koltchinskii_Lounici_bilinear} showed that the bilinear forms concentrate around their expectations
at a rate $o_{\mathbb P}(n^{-1/2})$ provided that ${\bf r}(\Sigma)=o(n)$ (which is much weaker than the 
assumption ${\bf r}(\Sigma)=o(n^{1/2})$ needed for the operator norm $\|S_r(E)\|_{\infty}$ to be of 
the order $o_{\mathbb P}(n^{-1/2})$). More precisely, the following result was proved for the 
operator
$$
R_r:= R_r(E):=S_r(E)-{\mathbb E}S_r(E)=\hat P_r - P_r - {\mathbb E}(\hat P_r-P_r)-L_r(E)
$$
(see Theorem 3 in \cite{Koltchinskii_Lounici_bilinear}):

\begin{theorem}
\label{technical_1}
Suppose that, for some $\gamma\in (0,1),$
\begin{equation}
\label{deltanleq}
{\mathbb E}\|\hat \Sigma-\Sigma\|_{\infty}\leq (1-\gamma)\frac{\bar g_r}{2}.
\end{equation}
Then, there exists a constant $D_{\gamma}>0$ such that, for all $u,v\in {\mathbb H}$
and for all $t\geq 1,$ 
the following bound holds with probability at least $1-e^{-t}:$ 
\begin{equation}
\label{remaind}
|\langle R_r u,v\rangle |\leq 
D_\gamma \frac{\|\Sigma\|_{\infty}^2}{\bar g_r^2}
\biggl(\sqrt{\frac{{\bf r}(\Sigma)}{n}}\bigvee \sqrt{\frac{t}{n}}\bigvee \frac{t}{n}\biggr)
\sqrt{\frac{t}{n}}\|u\|\|v\|.
\end{equation}
\end{theorem}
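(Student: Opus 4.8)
The plan is to work on the event $\mathcal{G}:=\{\|E\|_\infty\le(1-\gamma/2)\bar g_r/2\}$ with $E:=\hat\Sigma-\Sigma$; by (\ref{deltanleq}) together with (\ref{E1}) and (\ref{con_con}) one has $\mathbb{P}(\mathcal{G}^c)\le e^{-t}$ as long as $t$ is not of larger order than $n$, and for $t\gtrsim n$ the right side of (\ref{remaind}) exceeds an absolute constant while $\|R_r\|_\infty\lesssim1$, so the bound is trivial there. On $\mathcal{G}$ one may use the Riesz representations $\hat P_r=\frac{1}{2\pi i}\oint_{\gamma_r}(\zeta I-\hat\Sigma)^{-1}d\zeta$, $P_r=\frac{1}{2\pi i}\oint_{\gamma_r}(\zeta I-\Sigma)^{-1}d\zeta$ (with $\gamma_r$ the circle of radius $\bar g_r/2$ about $\mu_r$), and expanding $(\zeta I-\hat\Sigma)^{-1}$ in a convergent Neumann series in $E$ and computing residues yields the perturbation series $\hat P_r-P_r=\sum_{k\ge1}S_r^{(k)}(E)$, where $S_r^{(1)}(E)=L_r(E)$ and for $k\ge2$ each $S_r^{(k)}(E)$ is a homogeneous degree-$k$ polynomial in $E$, every monomial of which has the form $T_1ET_2\cdots ET_{k+1}$ with $T_i\in\{P_r,C_r,C_r^{(2)},\dots\}$ ($C_r^{(2)}:=\sum_{s\ne r}(\mu_r-\mu_s)^{-2}P_s$, etc.) and at least one factor equal to $P_r$. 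Since $\mathbb{E}L_r(E)=0$, on $\mathcal{G}$ one has $R_r=\sum_{k\ge2}(S_r^{(k)}(E)-\mathbb{E}S_r^{(k)}(E))$. I may take $\|u\|=\|v\|=1$.

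Next, I would substitute $E=n^{-1}\sum_{j=1}^nY_j$ with $Y_j:=X_j\otimes X_j-\Sigma$ into each scalar $\langle S_r^{(k)}(E)u,v\rangle$ and split the resulting multiple sum according to the pattern of coinciding indices; by multilinearity and $\mathbb{E}Y_j=0$ this exhibits $\langle(S_r^{(k)}(E)-\mathbb{E}S_r^{(k)}(E))u,v\rangle$ as a linear combination of completely degenerate $U$-statistics in the i.i.d.\ Gaussian-quadratic summands $Y_j$, of orders $\le k$ (the all-distinct part of order $k$, the rest of lower order with compensating powers of $n^{-1}$). For each such degenerate $U$-statistic I would invoke decoupling together with the moment/tail inequalities for degenerate $U$-statistics, equivalently multi-level Gaussian-chaos tail bounds, with the relevant Hilbert--Schmidt-type and operator-type kernel norms estimated by Gaussian quadratic-form bounds and by (\ref{sha_sha}). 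The point that makes the target rate possible is that, although $\|S_r(E)\|_\infty$ is only of order $\mathbf{r}(\Sigma)/n$, after centering the $O(\mathbf{r}(\Sigma)/n)$ bias (which lives entirely in $\mathbb{E}S_r(E)$) is gone and what remains is a centered chaos whose variance is far smaller than that crude bound.

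The dominant contribution is the order-two $U$-statistic coming from the quadratic monomial $P_rEC_rEC_r$ (and its adjoint): after the reductions its kernel has the shape $\langle C_rY_jC_ru,\,Y_kP_rv\rangle$, with second moment $\lesssim\mathbf{r}(\Sigma)\|\Sigma\|_\infty^4/\bar g_r^4$—the factor $\mathbf{r}(\Sigma)$ entering solely through $\mathrm{tr}(C_r^2\Sigma)\lesssim\mathbf{r}(\Sigma)\|\Sigma\|_\infty/\bar g_r^2$, i.e.\ via the high rank of $C_r$. Hence the Gaussian regime of the $U$-statistic tail gives fluctuations of order $\tfrac{\|\Sigma\|_\infty^2}{\bar g_r^2}\sqrt{\mathbf{r}(\Sigma)/n}\cdot n^{-1/2}$, producing the $\sqrt{\mathbf{r}(\Sigma)/n}\,\sqrt{t/n}$ term of (\ref{remaind}), while the coarser operator-type kernel parameters produce the $(t/n)$ and $(t/n)^{3/2}$ corrections. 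Everything else is smaller: the diagonal parts, the monomials carrying a $C_r^{(2)}$ or more than one $P_r$, and all monomials of degree $k\ge3$. Indeed, a degree-$k$ monomial of the worst type ($P_r$ at one end, the remaining $k$ factors $C_r$) has kernel second moment $\lesssim\mathbf{r}(\Sigma)^{k-1}\|\Sigma\|_\infty^{2k}/\bar g_r^{2k}$, so with the $n^{-k}$ of an order-$k$ $U$-statistic it contributes at most $(\text{leading term})\times\big(\tfrac{\|\Sigma\|_\infty}{\bar g_r}\sqrt{\mathbf{r}(\Sigma)/n}\big)^{k-2}$, and by (\ref{E1}) and the hypothesis (\ref{deltanleq}) the base of this geometric factor is bounded away from $1$ on $\mathcal{G}$, so $\sum_{k\ge3}$ stays within a $\gamma$-dependent constant of the leading term. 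Finally, the off-$\mathcal{G}$ contribution and the discrepancy between $\mathbb{E}S_r(E)$ and $\mathbb{E}[S_r(E)\mathbf{1}_{\mathcal{G}}]$ are absorbed using $\|R_r\|_\infty\lesssim1$ and a Cauchy--Schwarz estimate of $\mathbb{E}[\|E\|_\infty\mathbf{1}_{\mathcal{G}^c}]$, both negligible.

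The main obstacle is precisely the mechanism just described: the bilinear form $\langle S_r(E)u,v\rangle$, once centered, concentrates at the rate $\sqrt{\mathbf{r}(\Sigma)/n}\cdot n^{-1/2}$ rather than at the $\mathbf{r}(\Sigma)/n$ rate a naive operator-norm bound would suggest—because the $O(\mathbf{r}(\Sigma)/n)$ piece is a deterministic bias removed by centering, and the surviving fluctuation is a degenerate $U$-statistic whose leading kernel picks up only a single factor of $\mathbf{r}(\Sigma)$ (through $\mathrm{tr}(C_r^2\Sigma)$) against the $n^{-1}$ of the $U$-statistic normalization; equivalently, the low rank of $P_r$ confines the bilinear forms of $E$ that enter the kernel to fixed, finite-dimensional directions where they are genuinely $O(n^{-1/2})$. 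Turning this into (\ref{remaind}) requires carrying out the index-coincidence reduction and the decoupling/chaos tail bookkeeping uniformly in $u,v$ so that, over all degrees $k$ and all $U$-statistic orders, exactly the three terms of the maximum in (\ref{remaind}) survive with the stated powers of $t/n$; this is the bulk of the technical work.
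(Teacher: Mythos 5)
Your route is genuinely different from the authors'. Theorem \ref{technical_1} is proved in the companion paper via the same template that appears in this paper's proof of Lemma \ref{le:conc}: write $S_r(E)$ through the contour-integral perturbation series (Lemma \ref{lem:pert-representations}), multiply by a Lipschitz cutoff $\varphi(\|E\|_{\infty}/\delta)$ with $\delta\asymp \|\Sigma\|_{\infty}(\sqrt{\mathbf{r}(\Sigma)/n}\vee\sqrt{t/n}\vee t/n)$ and $\delta\|C_r\|_{\infty}\lesssim 1$, show the truncated functional of $(X_1,\dots,X_n)$ is Lipschitz with constant $\lesssim \|C_r\|_{\infty}^2\delta\,(\|\Sigma\|_{\infty}^{1/2}+\sqrt{\delta})/\sqrt{n}$ (cf.\ Lemma \ref{Lipschitz_constant}), and apply Gaussian isoperimetry (Lemma \ref{Gaussian_concentration}) around the median, which yields exactly the product structure of (\ref{remaind}). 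You instead expand $E=n^{-1}\sum_j Y_j$ into each monomial and treat the centered terms as completely degenerate $U$-statistics/Gaussian chaoses. Your identification of the mechanism is correct, and your leading-order variance computation checks out: for the kernel $\langle C_rY_jC_ru, Y_kP_rv\rangle$ one indeed gets second moment $\lesssim \mathrm{tr}(C_r\Sigma C_r\Sigma)\|\Sigma\|_{\infty}^2\|C_r\|_{\infty}^2\lesssim \mathbf{r}(\Sigma)\|\Sigma\|_{\infty}^4/\bar g_r^4$, hence fluctuations $\frac{\|\Sigma\|_{\infty}^2}{\bar g_r^2}\sqrt{\mathbf{r}(\Sigma)/n}\cdot\sqrt{t/n}$ in the Gaussian regime. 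What the isoperimetric route buys is that it handles the entire series and all values of $t$ in one stroke, with no chaos-order bookkeeping; what your route would buy, if completed, is an explicit identification of which $U$-statistic carries the leading fluctuation.

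However, as written the proposal has genuine gaps beyond deferred bookkeeping. First, you cannot write $R_r=\sum_{k\ge2}(S_r^{(k)}(E)-\mathbb{E}S_r^{(k)}(E))$ and bound each term unconditionally: the series converges only on $\{\|E\|_{\infty}\lesssim \bar g_r\}$, and the unconditional moments $\mathbb{E}\|S_r^{(k)}(E)\|_{\infty}\lesssim (4\|C_r\|_{\infty})^k\,\mathbb{E}\|E\|_{\infty}^k$ grow with $k$ (the $p$-th moment constants in (\ref{E1_p}) are not uniform in $p$), so neither the sum of the tail bounds over $k$ nor the identity $\mathbb{E}S_r(E)=\sum_k\mathbb{E}S_r^{(k)}(E)$ is available; your proposed fix via $\mathbb{E}[\|E\|_{\infty}\mathbf 1_{\mathcal G^c}]$ does not reach the terms involving $\|E\|_{\infty}^k$ for large $k$. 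This is precisely the problem the cutoff $\varphi(\|E\|_{\infty}/\delta)$ with $\delta\|C_r\|_{\infty}\le 1/24$ is designed to solve, and some such truncation (or a cap $k\le K_n$ with a separate estimate of the series tail on the good event and of the induced centering discrepancy) must be built into your argument, not absorbed afterwards. Second, your geometric-decay argument over $k\ge3$ is carried out at the level of kernel second moments only; the tail of a degenerate $U$-statistic whose kernel is itself a chaos of degree $2k$ involves a family of mixed operator-type norms contributing terms $(t/n)^{j/2}$ for $j$ up to the chaos order, and showing that, summed over $k$ and uniformly in $1\le t\lesssim n$, only the three terms of (\ref{remaind}) survive is an essential, unexecuted step, not a corollary of the variance estimate. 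Until these two points are addressed the proposal is a credible plan rather than a proof.
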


Condition (\ref{deltanleq}) (along with concentration bound of Theorem \ref{spectrum_sharper}) essentially guarantees that $\|\hat \Sigma-\Sigma\|_{\infty}<\frac{\bar g_r}{2}$ with a high probability, which makes 
the empirical spectral projector $\hat P_r$ a small random perturbation of the true spectral projector $P_r$
and allows us to use the tools of perturbation theory.  Theorem \ref{technical_1} easily implies the 
following concentration bound for bilinear forms $\langle \hat P_r u,v\rangle:$

\begin{corollary}
\label{concent_bilin}
Under the assumption of Theorem \ref{technical_1}, with some constants $D, D_{\gamma}>0,$ for all $u,v\in {\mathbb H}$ and for all $t\geq 1$ with probability at least $1-e^{-t},$
\begin{align}
\label{bdPr}
&
\nonumber
\Bigl|\Bigl\langle \hat P_r-{\mathbb E}\hat P_r u,v\Bigr\rangle\Bigr|
\leq D \frac{\|\Sigma\|_{\infty}}{\bar g_r}\sqrt{\frac{t}{n}}\|u\|\|v\|
\\
&
+
D_\gamma \frac{\|\Sigma\|_{\infty}^2}{\bar g_r^2}\biggl(\sqrt{\frac{{\bf r}(\Sigma)}{n}}\bigvee \sqrt{\frac{t}{n}}\bigvee \frac{t}{n}\biggr)
\sqrt{\frac{t}{n}}\|u\|\|v\|.
\end{align}
\end{corollary}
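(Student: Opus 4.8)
The plan is to decompose $\langle (\hat P_r - {\mathbb E}\hat P_r)u, v\rangle$ using the perturbation expansion of Lemma \ref{lem-pert-spectral}, namely $\hat P_r - P_r = L_r(E) + S_r(E)$, and then center. Subtracting ${\mathbb E}(\hat P_r - P_r) = {\mathbb E}L_r(E) + {\mathbb E}S_r(E) = {\mathbb E}S_r(E)$ (since $L_r(E)$ is mean zero), we obtain the clean identity
\begin{align}
\label{split_bilin}
\langle (\hat P_r - {\mathbb E}\hat P_r)u, v\rangle = \langle L_r(E)u, v\rangle + \langle R_r u, v\rangle,
\end{align}
where $R_r = S_r(E) - {\mathbb E}S_r(E)$ is exactly the operator controlled by Theorem \ref{technical_1}. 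So the second term on the right-hand side of \eqref{split_bilin} is immediately bounded, with probability at least $1 - e^{-t}$, by $D_\gamma \frac{\|\Sigma\|_\infty^2}{\bar g_r^2}\bigl(\sqrt{{\bf r}(\Sigma)/n}\bigvee \sqrt{t/n}\bigvee t/n\bigr)\sqrt{t/n}\,\|u\|\|v\|$, which is precisely the second line of \eqref{bdPr}. That disposes of the nonlinear part.

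It remains to control the linear term $\langle L_r(E)u, v\rangle$ and show it is $\le D\frac{\|\Sigma\|_\infty}{\bar g_r}\sqrt{t/n}\,\|u\|\|v\|$ with probability at least $1 - e^{-t}$. Here I would use the explicit i.i.d.\ representation recalled in the text,
$$
L_r(E) = n^{-1}\sum_{j=1}^n \bigl(P_r X_j \otimes C_r X_j + C_r X_j \otimes P_r X_j\bigr),
$$
so that $\langle L_r(E)u, v\rangle = n^{-1}\sum_{j=1}^n \bigl(\langle P_r X_j, v\rangle\langle C_r X_j, u\rangle + \langle C_r X_j, v\rangle\langle P_r X_j, u\rangle\bigr)$ is an average of i.i.d.\ mean-zero random variables, each a sum of products of pairs of linear functionals of the Gaussian vector $X_j$ — hence each summand lies in the second Wiener chaos. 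One then applies a Bernstein-type / Hanson–Wright-type concentration inequality for sums of such quadratic forms in Gaussian variables (or, equivalently, the standard tail bound for second-chaos variables). The relevant variance-type quantity involves $\|C_r P_r\Sigma\|$-type norms; using $\|C_r\|_\infty \le 1/\bar g_r$, $\|P_r\|_\infty = 1$, and $\|\Sigma\|_\infty$, one checks that the "sub-exponential" Orlicz parameter of each summand is $\lesssim \frac{\|\Sigma\|_\infty}{\bar g_r}\|u\|\|v\|$. Summing over $j$ and dividing by $n$, the Bernstein bound yields a sub-Gaussian regime contribution $\lesssim \frac{\|\Sigma\|_\infty}{\bar g_r}\sqrt{t/n}\|u\|\|v\|$ and a sub-exponential contribution $\lesssim \frac{\|\Sigma\|_\infty}{\bar g_r}\frac{t}{n}\|u\|\|v\|$; the latter is absorbed into the former for $t \le n$ (and for $t > n$ the bound is trivial, or can be merged into the already-present $t/n$ terms of the second line). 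Adding the two bounds via the triangle inequality on \eqref{split_bilin} and adjusting constants (replacing $t$ by $t + \log 2$ to combine the two $1 - e^{-t}$ events) gives \eqref{bdPr}.

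The main obstacle is the linear-term concentration: one must be careful that the bound depends only on $\|\Sigma\|_\infty$ and $\bar g_r$ (dimension-free), not on $\mathbf{r}(\Sigma)$ or the dimension of ${\mathbb H}$. This is what forces the use of a chaos/Hanson–Wright inequality with the operator-norm-type control $\|C_r\|_\infty\le 1/\bar g_r$ rather than a crude Hilbert–Schmidt bound, which would bring in $\mathbf{r}(\Sigma)$. Everything else — the perturbation split, the cancellation of ${\mathbb E}L_r(E)$, and the invocation of Theorem \ref{technical_1} — is routine bookkeeping.
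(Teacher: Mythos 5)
Your proposal is correct and follows essentially the same route the paper intends: split $\hat P_r-{\mathbb E}\hat P_r=L_r(E)+R_r,$ invoke Theorem \ref{technical_1} for the bilinear form of $R_r,$ and apply a Bernstein/second-chaos concentration bound to the i.i.d.\ sum representing $\langle L_r(E)u,v\rangle,$ with the sub-exponential $t/n$ contribution absorbed into the second line using $\bar g_r\leq \|\Sigma\|_{\infty}.$ No gaps.
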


Moreover, it is easy to see that if both $u$ and $v$ are either in the eigenspace of $\Sigma$ corresponding 
to the eigenvalue $\mu_r,$ or in the orthogonal complement of this eigenspace, then the first term in 
the right hand side of bound (\ref{bdPr}) could be dropped and the bound reduces to its second term. 

In addition to this, in \cite{Koltchinskii_Lounici_bilinear}, the asymptotic normality of bilinear 
forms $\langle \hat P_r-{\mathbb E}\hat P_r u,v\rangle, u,v\in {\mathbb H}$
was also proved in an asymptotic framework where $n\to \infty$ and ${\bf r}(\Sigma)=o(n).$ 

Another important question studied in \cite{Koltchinskii_Lounici_bilinear} concerns the structure of 
the bias ${\mathbb E}\hat P_r-P_r$ of empirical spectral projector $\hat P_r.$ Namely, it was proved 
that the bias can be represented as the sum of two terms, the main term $P_r({\mathbb E}\hat P_r-P_r)P_r$
being ``aligned" with the projector $P_r$ and the remainder $T_r$ being of a smaller order in the 
operator norm (provided that ${\bf r}(\Sigma)=o(n)$). More specifically, the following result was proved (see Theorem 4 in \cite{Koltchinskii_Lounici_bilinear}). 

\begin{theorem}
\label{bd_bias}
Suppose that, for some $\gamma\in (0,1),$ condition (\ref{deltanleq}) holds.
Then, there exists a constant $D_{\gamma}>0$ such that
$$
{\mathbb E}\hat P_r-P_r= P_r({\mathbb E}\hat P_r-P_r)P_r+T_r
$$
with $P_rT_rP_r=0$ and 
\begin{equation}
\label{T_r}
\|T_r\|_{\infty}\leq D_{\gamma}
\frac{m_r\|\Sigma\|_{\infty}^2}{\bar g_r^2}
\sqrt{\frac{{\bf r}(\Sigma)}{n}}\frac{1}{\sqrt{n}}.
\end{equation}
\end{theorem}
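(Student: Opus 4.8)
The plan is to reduce the statement to an operator-norm bound on the off-diagonal part of $\mathbb{E}S_r(E)$ (with respect to the orthogonal splitting $\mathbb{H}=\mathrm{ran}(P_r)\oplus\mathrm{ran}(P_r)^{\perp}$), and then to control that part by combining the perturbation expansion behind Lemma \ref{lem-pert-spectral} with explicit Gaussian moment computations.

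First, since $L_r(E)$ is linear in $E$ and $\mathbb{E}E=0$ one has $\mathbb{E}L_r(E)=0$, and since $C_rP_r=P_rC_r=0$ one has $P_rL_r(E)P_r=0$; hence $\mathbb{E}\hat P_r-P_r=\mathbb{E}S_r(E)$ by (\ref{perture}). Setting $T_r:=\mathbb{E}S_r(E)-P_r\,\mathbb{E}S_r(E)\,P_r$ (the off-diagonal part of $\mathbb{E}S_r(E)$), one gets $P_rT_rP_r=0$ and $\mathbb{E}\hat P_r-P_r=P_r(\mathbb{E}\hat P_r-P_r)P_r+T_r$ for free, so the whole task reduces to bounding $\|T_r\|_{\infty}$. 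I would work on the event $\mathcal{G}_r:=\{\|E\|_{\infty}\le(1-\gamma/2)\bar g_r/2\}$, whose complement has probability at most $e^{-c_{\gamma}n}$ by (\ref{deltanleq}) and Theorem \ref{spectrum_sharper} and on which the contributions of $\|\hat P_r-P_r\|_{\infty}\le 1$ and of truncating $L_r(E)$ to $\|T_r\|_{\infty}$ are negligible (using $\mathbb{E}\|E\|_{\infty}^{p}<\infty$). On $\mathcal{G}_r$ the Riesz-projector series
$$
\hat P_r-P_r=\sum_{k\ge 1}\frac{1}{2\pi i}\oint_{\gamma_r}R(\zeta)\bigl(ER(\zeta)\bigr)^{k}\,d\zeta,\qquad R(\zeta)=(\zeta I-\Sigma)^{-1},
$$
converges, with $\gamma_r$ the circle of radius $\bar g_r/2$ about $\mu_r$; its $k=1$ term is $L_r(E)$, so $S_r(E)=\sum_{k\ge 2}S_r^{(k)}(E)$ with $\|S_r^{(k)}(E)\|_{\infty}\lesssim(\|E\|_{\infty}/\bar g_r)^{k}$. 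Writing $R(\zeta)=P_r/(\zeta-\mu_r)+\mathcal{S}_r(\zeta)$, where $\mathcal{S}_r$ is the reduced resolvent (holomorphic at $\mu_r$, $\mathcal{S}_r(\mu_r)=C_r$, $P_r\mathcal{S}_r^{(j)}(\mu_r)=\mathcal{S}_r^{(j)}(\mu_r)P_r=0$), the residue calculus expresses each $S_r^{(k)}(E)$ as a finite sum of monomials $A_0EA_1E\cdots EA_k$ in which every $A_i$ is either $P_r$ or some $\mathcal{S}_r^{(j)}(\mu_r)$, of norm $\lesssim\bar g_r^{-(j+1)}$.

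For $k=2$ the outcome is explicit: $S_r^{(2)}(E)$ is the sum of the six monomials $P_rEP_rE\mathcal{S}_r'(\mu_r)$, $\mathcal{S}_r'(\mu_r)EP_rEP_r$, $P_rE\mathcal{S}_r'(\mu_r)EP_r$, $P_rEC_rEC_r$, $C_rEC_rEP_r$, $C_rEP_rEC_r$; the diagonal part is just $P_rE\mathcal{S}_r'(\mu_r)EP_r$ and the off-diagonal part is the other five monomials. Using the Gaussian identity $\mathbb{E}[EAE]=n^{-1}\bigl(\mathrm{tr}(A\Sigma)\Sigma+\Sigma A^{*}\Sigma\bigr)$ together with $P_r\Sigma=\Sigma P_r=\mu_rP_r$ and $C_rP_r=P_rC_r=0$, one checks that the first four of those five monomials have zero expectation and $\mathbb{E}[C_rEP_rEC_r]=\frac{\mu_rm_r}{n}C_r\Sigma C_r$, whose operator norm is at most $\frac{m_r\|\Sigma\|_{\infty}^{2}}{n\bar g_r^{2}}$ — already inside the asserted bound, since $\sqrt{\mathbf{r}(\Sigma)}\ge 1$.

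It remains to show that $\sum_{k\ge 3}$ of the off-diagonal parts of $\mathbb{E}S_r^{(k)}(E)$ has operator norm $\lesssim_{\gamma}\frac{m_r\|\Sigma\|_{\infty}^{2}}{\bar g_r^{2}}\sqrt{\frac{\mathbf{r}(\Sigma)}{n}}\,\frac{1}{\sqrt{n}}$, and this is the hard part. The crude bound $\mathbb{E}\|S_r^{(k)}(E)\|_{\infty}\lesssim\bar g_r^{-k}\mathbb{E}\|E\|_{\infty}^{k}\lesssim_{\gamma}(\|\Sigma\|_{\infty}/\bar g_r)^{k}(\mathbf{r}(\Sigma)/n)^{k/2}$ (from (\ref{E1_p})) does \emph{not} suffice — it exceeds the target whenever $m_r$ is small — so one must use the arithmetic structure of the Gaussian moments $\mathbb{E}[EA_1E\cdots EA_{k-1}E]$. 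By Wick's formula (and $\mathbb{E}[(X\otimes X)A(X\otimes X)]=\mathrm{tr}(A\Sigma)\Sigma+\Sigma A\Sigma+\Sigma A^{*}\Sigma$), each such moment is a finite sum of products of a scalar factor (a product of traces of words in the $A_i$ and $\Sigma$) and an operator factor (a word in the $A_i$ and $\Sigma$), with combinatorial weight $n^{-(k-q)}$, where $q\le\lfloor k/2\rfloor$ is the number of distinct summation indices involved. The crucial points are: (i) no monomial of $S_r^{(k)}$ with all $A_i$ equal to reduced-resolvent operators survives the contour integration (such a term is holomorphic at $\mu_r$), so every surviving monomial carries at least one $P_r$; (ii) through $P_r\Sigma=\mu_rP_r$ and $C_rP_r=0$, that $P_r$ forces every trace factor that would otherwise be of order $\mathbf{r}(\Sigma)$ either to collapse to $\mathrm{tr}(P_r\cdots)$, of order at most $m_r$, or to vanish; (iii) any trace factor of order $\mathbf{r}(\Sigma)^{1/2}$ (or $\mathbf{r}(\Sigma)$) that does remain is attached to a forced index coincidence, i.e.\ to an extra factor $n^{-1}$, and one closes the estimate using (\ref{deltanleq}), which by Theorem \ref{th_operator} amounts to $(\|\Sigma\|_{\infty}/\bar g_r)^{2}\,\mathbf{r}(\Sigma)\lesssim_{\gamma}n$. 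The main obstacle I anticipate is exactly this bookkeeping in (iii) — tracking which Wick contractions produce $\mathbf{r}(\Sigma)$-sized traces and checking that each is compensated by a combinatorial $n^{-1}$; the rest is routine perturbation theory and Gaussian computation.
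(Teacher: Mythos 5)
Your reduction and your $k=2$ computation are correct (the identity $\mathbb{E}[EAE]=n^{-1}(\mathrm{tr}(A\Sigma)\Sigma+\Sigma A^{*}\Sigma)$, the vanishing of four of the five non-$P_rP_r$-block monomials, and $\mathbb{E}[C_rEP_rEC_r]=\frac{\mu_r m_r}{n}C_r\Sigma C_r$ all check out), and you correctly diagnose that the crude bound $\mathbb{E}\|S_r^{(k)}(E)\|_{\infty}\lesssim (\|\Sigma\|_{\infty}/\bar g_r)^{k}(\mathbf{r}(\Sigma)/n)^{k/2}$ misses the target (\ref{T_r}) by a factor of order $\sqrt{\mathbf{r}(\Sigma)}$. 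But that is precisely where the proof stops being routine, and your proposal does not close it: step (iii) is explicitly left as an ``obstacle I anticipate,'' i.e.\ the heart of the argument is missing. Worse, step (ii) as stated is false: a single $P_r$ in the word $A_0EA_1\cdots EA_k$ only collapses or kills the trace factors produced by Wick contractions that actually pass through that $P_r$; a contraction block disjoint from it still produces a factor $\mathrm{tr}(C_r\Sigma)\asymp \|\Sigma\|_{\infty}\mathbf{r}(\Sigma)/\bar g_r$ (e.g., $\mathbb{E}[YC_rY]=\mathrm{tr}(C_r\Sigma)\Sigma+\Sigma C_r\Sigma$ appearing as an inner factor for $k\geq 4$). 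What saves such terms is not the mere presence of a $P_r$ but a case analysis showing they either vanish via $P_rC_r=0$, $P_r\Sigma C_r=0$, land in the $P_rP_r$ block, or carry an extra power of $n^{-1}$ from a forced index coincidence --- and verifying this uniformly over all surviving monomials is the theorem. A second unaddressed point is summability in $k$: the number of Wick contraction patterns for the $k$-th order term grows super-geometrically (like $k^{k}$, not $C^{k}$), so a term-by-term moment expansion cannot simply be summed over all $k\geq 3$; one needs either a truncation at $k\asymp \log n$ combined with a geometric tail bound on the event $\{\|E\|_{\infty}\leq \bar g_r/4\}$, or a different mechanism altogether. Without these two pieces the proposal is a plausible program, not a proof.

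Two further remarks. First, this paper does not prove Theorem \ref{bd_bias}; it is imported verbatim from \cite{Koltchinskii_Lounici_bilinear} (Theorem 4 there), so there is no in-text proof to compare against --- but the cited proof does have to confront exactly the difficulty you flagged, and it is not a short computation. Second, you can simplify your own setup at no cost: since $(2P_r-I)\Sigma(2P_r-I)=\Sigma$, the law of $X$ is invariant under $X\mapsto (2P_r-I)X$, hence $\mathbb{E}\hat P_r=(2P_r-I)\mathbb{E}\hat P_r(2P_r-I)$ and the blocks $P_r\,\mathbb{E}\hat P_r\,(I-P_r)$ and $(I-P_r)\,\mathbb{E}\hat P_r\,P_r$ vanish identically. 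This reduces $T_r$ to $(I-P_r)\mathbb{E}S_r(E)(I-P_r)$ and eliminates two of the three blocks you would otherwise have to track through the Wick bookkeeping; it does not, however, remove the need for that bookkeeping on the remaining block.
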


In the case when $m_r={\rm rank}(P_r)=1$ (so, $\mu_r$ is an eigenvalue of $\Sigma$ of multiplicity $1$),
the structure of the bias becomes especially simple. Let $P_r=\theta_r\otimes \theta_r,$ where $\theta_r$
is a unit norm eigenvector of $\Sigma$ corresponding to $\mu_r.$ Then it is easy to see that 
\begin{equation}
\label{Ehat}
{\mathbb E}\hat P_r -P_r= b_rP_r+T_r
\end{equation}
with $b_r= \langle ({\mathbb E}\hat P_r-P_r)\theta_r,\theta_r\rangle$
and $T_r$ defined in Theorem \ref{bd_bias}. Moreover,
$$
b_r=\langle {\mathbb E}\hat P_r-P_r, \theta_r\otimes \theta_r\rangle=
{\mathbb E}\langle \hat \theta_r,\theta_r\rangle^2-1,
$$
implying that $b_r\in [-1,0].$ Thus, parameter $b_r$ is an important characteristic 
of the bias of empirical spectral projector $\hat P_r.$ It was shown in \cite{Koltchinskii_Lounici_bilinear}
that, under the assumption ${\bf r}(\Sigma)\lesssim n,$
\begin{equation}
\label{b_r_bd}
|b_r|\lesssim \frac{\|\Sigma\|_{\infty}^2}{\bar g_r^2}\frac{{\bf r}(\Sigma)}{n}.
\end{equation}
Note that this upper bound is larger than upper bound (\ref{T_r}) on the remainder 
$\|T_r\|_{\infty}$ by a factor $\sqrt{{\bf r}(\Sigma)}.$ 

Let now $\hat P_r=\hat \theta_r \otimes \hat \theta_r$ with a unit norm eigenvector $\hat \theta_r$
of $\hat \Sigma.$ Since the vectors $\hat \theta_r, \theta_r$ are defined only up to their signs, 
assume without loss of generality that $\langle \hat \theta_r,\theta_r\rangle\geq 0.$ 
The following result, proved in \cite{Koltchinskii_Lounici_bilinear} (see Theorem 6), 
shows that the linear forms $\langle \hat \theta_r, u\rangle$ have ``Bernstein type"
concentration around $\sqrt{1+b_r}\langle \theta_r, u\rangle$ with deviations of the order 
$O_{\mathbb P}(n^{-1/2}).$

\begin{theorem}
\label{th:hattheta}
Suppose that condition (\ref{deltanleq}) holds for some $\gamma\in (0,1)$ and also that 
\begin{equation}
\label{cond_b_r}
1+b_r\geq \frac{\gamma}{2}.
\end{equation}
Then, there exists a constant $C_{\gamma}>0$ such that for all $t\geq 1$ with probability at least 
$1-e^{-t}$
$$
\Bigl|\Bigl\langle \hat \theta_r-\sqrt{1+b_r}\theta_r,u\Bigr\rangle\Bigr|
\leq C_{\gamma} \frac{\|\Sigma\|_{\infty}}{\bar g_r}
\biggl(\sqrt{\frac{t}{n}}\bigvee \frac{t}{n}\biggr)\|u\|.
$$  
\end{theorem}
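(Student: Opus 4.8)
The plan is to derive the concentration of $\langle\hat\theta_r-\sqrt{1+b_r}\,\theta_r,u\rangle$ from the already-established concentration of the bilinear forms of $\hat P_r$ (Corollary~\ref{concent_bilin}, together with Theorem~\ref{bd_bias} and formula~(\ref{Ehat})). The key algebraic identity is that, since $\hat P_r=\hat\theta_r\otimes\hat\theta_r$ and $P_r=\theta_r\otimes\theta_r$ with $\langle\hat\theta_r,\theta_r\rangle\geq 0$, applying $\hat P_r$ and $P_r$ to $\theta_r$ gives
\begin{equation}
\langle\hat\theta_r,\theta_r\rangle\,\hat\theta_r=\hat P_r\theta_r,\qquad \langle\hat\theta_r,\theta_r\rangle^2=\langle\hat P_r\theta_r,\theta_r\rangle.
\end{equation}
Hence for arbitrary $u\in{\mathbb H}$,
\begin{equation}
\langle\hat\theta_r,\theta_r\rangle\langle\hat\theta_r,u\rangle=\langle\hat P_r\theta_r,u\rangle=\langle\hat P_r u,\theta_r\rangle.
\end{equation}
The first step is therefore to control the scalar $\langle\hat\theta_r,\theta_r\rangle$: write $\langle\hat P_r\theta_r,\theta_r\rangle=\langle{\mathbb E}\hat P_r\theta_r,\theta_r\rangle+\big(\langle\hat P_r\theta_r,\theta_r\rangle-\langle{\mathbb E}\hat P_r\theta_r,\theta_r\rangle\big)$. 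By~(\ref{Ehat}) the first term equals $1+b_r+\langle T_r\theta_r,\theta_r\rangle=1+b_r$ (using $P_rT_rP_r=0$), and the fluctuation term is controlled by the ``aligned'' case of Corollary~\ref{concent_bilin} (both $u,v=\theta_r$ lie in the eigenspace of $\mu_r$), so only the second term of~(\ref{bdPr}) survives and
\begin{equation}
\big|\langle\hat\theta_r,\theta_r\rangle^2-(1+b_r)\big|\lesssim_\gamma \frac{\|\Sigma\|_\infty^2}{\bar g_r^2}\Big(\sqrt{\tfrac{{\bf r}(\Sigma)}{n}}\bigvee\sqrt{\tfrac tn}\bigvee\tfrac tn\Big)\sqrt{\tfrac tn}.
\end{equation}
Combined with the hypothesis~(\ref{cond_b_r}) that $1+b_r\geq\gamma/2$, taking square roots (using $|\sqrt a-\sqrt b|\le|a-b|/\sqrt b$) yields $|\langle\hat\theta_r,\theta_r\rangle-\sqrt{1+b_r}|\lesssim_\gamma (\|\Sigma\|_\infty/\bar g_r)(\sqrt{t/n}\bigvee t/n)$, after absorbing the higher-order ${\bf r}(\Sigma)/n$ factors, which are dominated by condition~(\ref{deltanleq}) via Theorem~\ref{th_operator}; in particular $\langle\hat\theta_r,\theta_r\rangle\geq\sqrt{\gamma/2}/2>0$ on this event, so dividing by it is legitimate.

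The second step is to write
\begin{equation}
\langle\hat\theta_r-\sqrt{1+b_r}\,\theta_r,u\rangle
=\frac{\langle\hat P_r u,\theta_r\rangle}{\langle\hat\theta_r,\theta_r\rangle}-\sqrt{1+b_r}\,\langle\theta_r,u\rangle
=\frac{\langle\hat P_r u,\theta_r\rangle-\langle\hat\theta_r,\theta_r\rangle\sqrt{1+b_r}\,\langle\theta_r,u\rangle}{\langle\hat\theta_r,\theta_r\rangle}.
\end{equation}
In the numerator, split $\langle\hat P_r u,\theta_r\rangle=\langle{\mathbb E}\hat P_r u,\theta_r\rangle+(\text{fluctuation})$; by~(\ref{Ehat}), $\langle{\mathbb E}\hat P_r u,\theta_r\rangle=\langle\theta_r,u\rangle+b_r\langle\theta_r,u\rangle+\langle T_r u,\theta_r\rangle=(1+b_r)\langle\theta_r,u\rangle+\langle T_r u,\theta_r\rangle$. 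Then the numerator becomes
\begin{equation}
\big[(1+b_r)-\langle\hat\theta_r,\theta_r\rangle\sqrt{1+b_r}\big]\langle\theta_r,u\rangle+\langle T_r u,\theta_r\rangle+\big(\langle\hat P_r u,\theta_r\rangle-\langle{\mathbb E}\hat P_r u,\theta_r\rangle\big).
\end{equation}
The bracketed coefficient is $\sqrt{1+b_r}\,(\sqrt{1+b_r}-\langle\hat\theta_r,\theta_r\rangle)$, bounded by the Step~1 estimate times $\sqrt{1+b_r}\le 1$; the term $\langle T_ru,\theta_r\rangle$ is bounded by $\|T_r\|_\infty\|u\|$, which by~(\ref{T_r}) with $m_r=1$ is of order $(\|\Sigma\|_\infty^2/\bar g_r^2)\sqrt{{\bf r}(\Sigma)/n}\,n^{-1/2}\|u\|$ — this is $\lesssim_\gamma(\|\Sigma\|_\infty/\bar g_r)\sqrt{t/n}\|u\|$ once we use~(\ref{deltanleq}) to bound $(\|\Sigma\|_\infty/\bar g_r)\sqrt{{\bf r}(\Sigma)/n}$ — and the last fluctuation term is handled directly by Corollary~\ref{concent_bilin}. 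Collecting the three pieces, dividing by $\langle\hat\theta_r,\theta_r\rangle\geq c(\gamma)>0$, and simplifying the maxima (the term $\sqrt{{\bf r}(\Sigma)/n}\cdot\sqrt{t/n}$ from~(\ref{bdPr}) is absorbed using~(\ref{deltanleq}), while the $\sqrt{t/n}\cdot\sqrt{t/n}=t/n$ and $(t/n)\sqrt{t/n}$ pieces combine into the stated $\sqrt{t/n}\bigvee t/n$) gives the asserted bound with a constant $C_\gamma$.

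The main obstacle I expect is not any single estimate but the careful bookkeeping of which maxima in~(\ref{bdPr}) and which ${\bf r}(\Sigma)$-dependent factors are genuinely dominated by the target rate $\sqrt{t/n}\bigvee t/n$. The point is that Corollary~\ref{concent_bilin}, as stated, has a second term carrying an extra factor $\|\Sigma\|_\infty/\bar g_r$ and an extra $\sqrt{{\bf r}(\Sigma)/n}$, which is \emph{not} per se of order $\sqrt{t/n}$; the resolution is that condition~(\ref{deltanleq}) together with Theorem~\ref{th_operator} forces $(\|\Sigma\|_\infty/\bar g_r)\sqrt{{\bf r}(\Sigma)/n}\lesssim_\gamma 1$ (indeed $\lesssim 1-\gamma$ up to constants), so that factor can simply be dropped at the cost of a $\gamma$-dependent constant. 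Once this observation is in place, everything else is routine, and the role of hypothesis~(\ref{cond_b_r}) is precisely to keep $\langle\hat\theta_r,\theta_r\rangle$ bounded away from $0$ so that the division in the displayed identity is stable.
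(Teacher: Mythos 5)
First, a point of reference: this paper does not actually prove Theorem \ref{th:hattheta}; it is quoted from the companion paper (Theorem 6 of \cite{Koltchinskii_Lounici_bilinear}), so there is no in-paper proof to compare against. Your strategy --- reducing everything to the bilinear concentration of $\hat P_r$ (Corollary \ref{concent_bilin}) through the identities $\langle \hat\theta_r,\theta_r\rangle^2=\langle \hat P_r\theta_r,\theta_r\rangle$ and $\langle \hat\theta_r,\theta_r\rangle\langle \hat\theta_r,u\rangle=\langle \hat P_r u,\theta_r\rangle$, combined with the bias decomposition (\ref{Ehat}) and the bound (\ref{T_r}) on $T_r$ --- is the natural route and the algebra is correct.

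There is, however, one genuine gap as written, and it occurs twice. In Step 1 you assert that on the concentration event $\langle\hat\theta_r,\theta_r\rangle\geq \sqrt{\gamma/2}/2$, ``so dividing by it is legitimate.'' But your own estimate gives only $|\langle\hat\theta_r,\theta_r\rangle-\sqrt{1+b_r}|\lesssim_{\gamma}\frac{\|\Sigma\|_{\infty}}{\bar g_r}(\sqrt{t/n}\vee t/n)$, and nothing in the hypotheses makes this quantity small: for large $t$, or large $\frac{\|\Sigma\|_{\infty}}{\bar g_r}$, it can exceed $\sqrt{\gamma/2}/2$, in which case the lower bound on $\langle\hat\theta_r,\theta_r\rangle$ fails and the division is uncontrolled. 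The same difficulty affects your absorption of the pieces $\frac{\|\Sigma\|_{\infty}^2}{\bar g_r^2}\cdot\frac{t}{n}$ and $\frac{\|\Sigma\|_{\infty}^2}{\bar g_r^2}\bigl(\frac{t}{n}\bigr)^{3/2}$ coming from the second term of (\ref{bdPr}): relative to the target $\frac{\|\Sigma\|_{\infty}}{\bar g_r}(\sqrt{t/n}\vee t/n)$ they carry an extra factor $\frac{\|\Sigma\|_{\infty}}{\bar g_r}\geq 1$, and condition (\ref{deltanleq}) via Theorem \ref{th_operator} only disposes of the $\sqrt{{\bf r}(\Sigma)/n}$ piece, not of these; your remark that ``$\sqrt{t/n}\cdot\sqrt{t/n}$ and $(t/n)\sqrt{t/n}$ combine into $\sqrt{t/n}\vee t/n$'' silently drops that extra factor.

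Both problems are repaired by one standard reduction that must be stated explicitly: since $\|\hat\theta_r\|=\|\theta_r\|=1$ and $\sqrt{1+b_r}\leq 1$, the left-hand side of the theorem is trivially at most $2\|u\|$, so the claimed bound holds automatically (with $C_{\gamma}\geq 2/c_0$) unless $\frac{\|\Sigma\|_{\infty}}{\bar g_r}\sqrt{t/n}\leq c_0$ for a small constant $c_0=c_0(\gamma)$. Assuming the latter, one gets $\frac{\|\Sigma\|_{\infty}^2}{\bar g_r^2}\frac{t}{n}\leq c_0 \frac{\|\Sigma\|_{\infty}}{\bar g_r}\sqrt{t/n}$ (and likewise for the $(t/n)^{3/2}$ piece, noting that $t\lesssim n$ in this regime since $\bar g_r\leq\|\Sigma\|_{\infty}$), and the Step 1 deviation becomes at most $C_{\gamma}'c_0\leq\sqrt{\gamma/2}/2$ for $c_0$ small enough, which legitimizes the division. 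With this observation inserted at the outset, the rest of your argument goes through.
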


Thus, if one constructs a proper estimator of the bias parameter $b_r,$ it would 
be possible to improve a ``naive estimator" $\langle \hat \theta_r, u\rangle$
of linear form $\langle \theta_r,u\rangle$ by reducing its bias. A version of such 
estimator based on the double sample $X_1,\dots, X_n, \tilde X_1,\dots, \tilde X_n$ of 
i.i.d. copies of $X$ was suggested in \cite{Koltchinskii_Lounici_bilinear}. If 
$\tilde \Sigma=\tilde \Sigma_n$ denotes the sample covariance based on 
$\tilde X_1,\dots, \tilde X_n$ (the second subsample) and $\tilde P_r=\tilde \theta_r\otimes \tilde \theta_r$
denotes the corresponding empirical spectral projector (estimator of $P_r$), then 
the estimator $\hat b_r$ of the bias parameter $b_r$ is defined as follows:
$$
\hat b_r := \langle \hat \theta_r, \tilde \theta_r\rangle -1,
$$ 
where the signs of $\hat \theta_r, \tilde \theta_r$ are chosen so that $\langle \hat \theta_r, \tilde \theta_r\rangle\geq 0.$ Based on estimator $\hat b_r,$ one can also define a bias corrected estimator 
$\check \theta_r:= \frac{\hat \theta_r}{\sqrt{1+\hat b_r}}$ (which is not necessarily a unit vector)
and prove the following result, showing that $\langle \check \theta_r,u\rangle$ is a $\sqrt{n}$-consistent 
estimator of $\langle \theta_r, u\rangle$ (at least in the case when ${\bf r}(\Sigma)\leq cn$ for a sufficiently 
small constant $c$):

\begin{proposition}\label{prop:tildetheta}
Under the assumptions and notations of Theorem \ref{th:hattheta}, for some 
constant $C_{\gamma}>0$ with probability at least $1-e^{-t},$
\begin{equation}
\label{estim_br}
|\hat b_r-b_r|
\leq 
C_\gamma \frac{\|\Sigma\|_{\infty}^2}{\bar g_r^2}
\biggl(\sqrt{\frac{{\bf r}(\Sigma)}{n}}\bigvee \sqrt{\frac{t}{n}}\bigvee \frac{t}{n}
\biggr)
\biggl(\sqrt{\frac{t}{n}}\bigvee \frac{t}{n}\biggr),
\end{equation}
and, for all $u\in {\mathbb H},$ with the same probability
\begin{equation}
\label{tildetheta}
\Bigl|\langle \check \theta_r-\theta_r,u\rangle\Bigr| 
\leq C_{\gamma} \frac{\|\Sigma\|_{\infty}}{\bar g_r}
\biggl(\sqrt{\frac{t}{n}}\bigvee \frac{t}{n}\biggr)\|u\|.
\end{equation}
\end{proposition}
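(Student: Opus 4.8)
The plan is to establish the bound (\ref{estim_br}) on $\hat b_r-b_r$ first, and then to deduce (\ref{tildetheta}) from it together with Theorem \ref{th:hattheta}. Everything will be done on the intersection of the high‑probability events furnished by the quoted results; each has probability at least $1-e^{-t}$, so their intersection does too after adjusting absolute constants, and on this event $\hat\theta_r,\tilde\theta_r$ are both close to $\sqrt{1+b_r}\,\theta_r$, which makes the various sign conventions mutually consistent. Since $\tilde X_1,\dots,\tilde X_n$ have the same law as $X_1,\dots,X_n$, the bias parameter of $\tilde\theta_r$ is again $b_r$, so Theorem \ref{th:hattheta} and Corollary \ref{concent_bilin} apply to $\tilde\theta_r,\tilde P_r$ with the same $\gamma,b_r$.

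For (\ref{estim_br}) I would set $h:=\hat\theta_r-\sqrt{1+b_r}\,\theta_r$ and $\tilde h:=\tilde\theta_r-\sqrt{1+b_r}\,\theta_r$ and expand $\hat b_r+1=\langle\hat\theta_r,\tilde\theta_r\rangle$ to get
\[ \hat b_r-b_r=\sqrt{1+b_r}\,\bigl(\langle\theta_r,\tilde h\rangle+\langle h,\theta_r\rangle\bigr)+\langle h,\tilde h\rangle, \]
where $\sqrt{1+b_r}\in[\sqrt{\gamma/2},1]$ is harmless. The crucial point is that $\langle h,\theta_r\rangle$ is second order, not of size $n^{-1/2}$: using $P_r=\theta_r\otimes\theta_r$ and the definition of $b_r$,
\[ \langle\hat\theta_r,\theta_r\rangle^2=\langle\hat P_r\theta_r,\theta_r\rangle=(1+b_r)+\bigl\langle(\hat P_r-{\mathbb E}\hat P_r)\theta_r,\theta_r\bigr\rangle, \]
and since $\theta_r$ lies in the eigenspace of $\Sigma$ at $\mu_r$, the remark after Corollary \ref{concent_bilin} lets me drop the first term in (\ref{bdPr}) and bound the last term by $D_\gamma\frac{\|\Sigma\|_\infty^2}{\bar g_r^2}\bigl(\sqrt{{\bf r}(\Sigma)/n}\vee\sqrt{t/n}\vee t/n\bigr)\sqrt{t/n}$. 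Taking square roots (using $1+b_r\ge\gamma/2$) transfers this bound to $|\langle h,\theta_r\rangle|=\bigl|\langle\hat\theta_r,\theta_r\rangle-\sqrt{1+b_r}\bigr|$, and likewise to $|\langle\theta_r,\tilde h\rangle|$.

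For the cross term $\langle h,\tilde h\rangle$ I would use independence of the two subsamples. Conditionally on $\tilde X_1,\dots,\tilde X_n$ the unit vector $\tilde h/\|\tilde h\|$ is deterministic, so Theorem \ref{th:hattheta} applied to $\hat\theta_r$ with $u=\tilde h/\|\tilde h\|$ gives $|\langle h,\tilde h/\|\tilde h\|\rangle|\lesssim_\gamma\frac{\|\Sigma\|_\infty}{\bar g_r}(\sqrt{t/n}\vee t/n)$ with conditional, hence unconditional, probability at least $1-e^{-t}$. On the other hand a crude bound $\|\tilde h\|\lesssim_\gamma\frac{\|\Sigma\|_\infty}{\bar g_r}\bigl(\sqrt{{\bf r}(\Sigma)/n}\vee\sqrt{t/n}\vee t/n\bigr)$ follows from the rank‑one identity $\|\tilde\theta_r-\theta_r\|\le\sqrt2\,\|\tilde P_r-P_r\|_\infty$, Lemma \ref{lem-pert-spectral}, (\ref{sha_sha}), the estimate $|1-\sqrt{1+b_r}|\le|b_r|\lesssim\frac{\|\Sigma\|_\infty^2}{\bar g_r^2}\frac{{\bf r}(\Sigma)}{n}$ (by (\ref{b_r_bd})), and the fact that (\ref{deltanleq}) forces $\frac{\|\Sigma\|_\infty}{\bar g_r}\sqrt{{\bf r}(\Sigma)/n}\lesssim1$. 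Multiplying these two factors bounds $|\langle h,\tilde h\rangle|$, and hence also $|\hat b_r-b_r|$ (the terms from the previous paragraph being dominated), by the right‑hand side of (\ref{estim_br}).

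Finally, for (\ref{tildetheta}) I would write $\check\theta_r-\theta_r=\frac{\hat\theta_r-\sqrt{1+b_r}\,\theta_r}{\sqrt{1+\hat b_r}}+\frac{(\sqrt{1+b_r}-\sqrt{1+\hat b_r})\,\theta_r}{\sqrt{1+\hat b_r}}$; on the relevant event $1+\hat b_r$ is bounded below by $\gamma/4$ (a consequence of (\ref{cond_b_r}) and (\ref{estim_br})), so both denominators are $\asymp_\gamma1$. Pairing the first summand with $u$ and invoking Theorem \ref{th:hattheta} gives $C_\gamma\frac{\|\Sigma\|_\infty}{\bar g_r}(\sqrt{t/n}\vee t/n)\|u\|$; pairing the second with $u$ and using $|\sqrt{1+b_r}-\sqrt{1+\hat b_r}|\lesssim_\gamma|\hat b_r-b_r|$, (\ref{estim_br}), $\frac{\|\Sigma\|_\infty}{\bar g_r}\sqrt{{\bf r}(\Sigma)/n}\lesssim1$ and $|\langle\theta_r,u\rangle|\le\|u\|$ gives the same order. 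The hard part of the whole argument is the second‑order analysis of the aligned term $\langle h,\theta_r\rangle$: a direct use of Theorem \ref{th:hattheta} only yields $O_{\mathbb P}(n^{-1/2})$ there, and hence an $O_{\mathbb P}(n^{-1/2})$ bound on $\hat b_r-b_r$, far weaker than (\ref{estim_br}); one must instead extract the genuine second‑order size of the deviation of $\hat\theta_r$ from $\sqrt{1+b_r}\,\theta_r$ in the direction $\theta_r$ through the projector identity for $\langle\hat\theta_r,\theta_r\rangle^2$ and the ``aligned'' case of Corollary \ref{concent_bilin}. The conditioning device of the third paragraph — a crude Hilbert‑space norm bound on one factor of $\langle h,\tilde h\rangle$ combined with the sharp linear‑functional bound on the other — is the complementary ingredient that keeps the cross term at the right order.
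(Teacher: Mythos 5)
This proposition is quoted in Section~\ref{sec:empirical} from \cite{Koltchinskii_Lounici_bilinear} and the present paper contains no proof of it, so there is nothing internal to compare against; judged on its own terms, your argument is correct and is precisely the route the quoted ingredients are designed for — the identity $\langle\hat\theta_r,\theta_r\rangle^2=\langle\hat P_r\theta_r,\theta_r\rangle$ combined with the vanishing of $\langle L_r(E)\theta_r,\theta_r\rangle$ (the remark after Corollary \ref{concent_bilin}) makes the $\theta_r$-aligned component genuinely second order, and conditioning on the second sample, with a sharp linear-functional bound on one factor and a crude norm bound on the other, keeps $\langle h,\tilde h\rangle$ at the right order. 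The only loose end is routine bookkeeping for large $t$: once $\frac{\|\Sigma\|_{\infty}^2}{\bar g_r^2}\bigl(\sqrt{t/n}\vee t/n\bigr)$ is no longer small the claim (\ref{estim_br}) is disposed of trivially since $|\hat b_r-b_r|\leq 2$, while for (\ref{tildetheta}) your lower bound $1+\hat b_r\geq \gamma/4$ is only guaranteed in the complementary regime, so that case should be mentioned explicitly (it is an edge case inherited from the statement itself rather than a defect of your argument).
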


In addition to this, asymptotic normality of $\langle \check \theta_r,u\rangle$ was also proved in 
\cite{Koltchinskii_Lounici_bilinear} under the assumption that ${\bf r}(\Sigma)=o(n).$

Finally, we will discuss the results on normal approximation of the (squared) Hilbert--Schmidt norms
$\|\hat P_r-P_r\|_2^2$ for an empirical spectral projector $\hat P_r$ obtained in \cite{Koltchinskii_Lounici_HS}. It was shown in this paper that, in the case when ${\bf r}(\Sigma)=o(n),$  the size of the expectation ${\mathbb E}\|\hat P_r-P_r\|_2^2$ could be characterized by 
the quantity $A_r(\Sigma):=2{\rm tr}(P_r\Sigma P_r){\rm tr}(C_r\Sigma C_r)$ 
(which, under mild assumption, is of the same order as ${\bf r}(\Sigma)$):
$$
{\mathbb E}\|\hat P_r-P_r\|_2^2= (1+o(1))\frac{A_r(\Sigma)}{n}.
$$ 
A similar parameter characterizing the size of the variance ${\rm Var}(\|\hat P_r-P_r\|_2^2)$
is defined as $B_r(\Sigma):=2\sqrt{2}\|P_r\Sigma P_r\|_2\|C_r\Sigma C_r\|_2.$ Namely, 
the following result holds (Theorem 7 in \cite{Koltchinskii_Lounici_HS}):

\begin{theorem}
\label{th:varvar}
Suppose condition (\ref{deltanleq}) holds for some $\gamma\in (0,1).$ 
Then the following bound holds with some constant $C_{\gamma}>0:$
\begin{align}
\label{variance_bd}
&
\left|\frac{n}{B_r(\Sigma)}{\rm Var}^{1/2}(\|\hat P_r-P_r\|_2^2)-1\right|
\leq 
C_{\gamma} m_r\frac{\|\Sigma\|_{\infty}^3}{\bar g_r^3}\frac{{\bf r}(\Sigma)}{B_r(\Sigma)\sqrt{n}}+
\frac{m_r+1}{n}.
\end{align}
\end{theorem}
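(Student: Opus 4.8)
The plan is to split off the part of $\hat P_r-P_r$ that is linear in $E:=\hat\Sigma-\Sigma$, compute the variance of its squared Hilbert--Schmidt norm \emph{exactly}, and show that every other contribution to the standard deviation of $\|\hat P_r-P_r\|_2^2$ is of smaller order than $B_r(\Sigma)/n$. By Lemma \ref{lem-pert-spectral} and the definition of $R_r$,
\[
\hat P_r-P_r={\mathbb E}(\hat P_r-P_r)+L_r(E)+R_r,\qquad L_r(E)=C_rEP_r+P_rEC_r,
\]
with ${\mathbb E}L_r(E)={\mathbb E}R_r=0$. Expanding the square and discarding the deterministic term $\|{\mathbb E}(\hat P_r-P_r)\|_2^2$,
\begin{align*}
\|\hat P_r-P_r\|_2^2-\|{\mathbb E}(\hat P_r-P_r)\|_2^2
&=\|L_r(E)\|_2^2+2\langle L_r(E),R_r\rangle+\|R_r\|_2^2\\
&\quad+2\langle {\mathbb E}(\hat P_r-P_r),\,L_r(E)+R_r\rangle .
\end{align*}
Since $Y\mapsto{\rm Var}^{1/2}(Y)$ is a seminorm on $L^2$, the triangle inequality reduces the proof to two tasks: (i) an exact evaluation of ${\rm Var}^{1/2}(\|L_r(E)\|_2^2)$, and (ii) upper bounds for the standard deviations of the remaining four summands.

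For (i), the identities $C_rP_r=P_rC_r=0$ make the two summands of $L_r(E)$ orthogonal in the Hilbert--Schmidt inner product, so $\|L_r(E)\|_2^2=2\|C_rEP_r\|_2^2$; moreover $C_rEP_r=C_r\hat\Sigma P_r=n^{-1}\sum_{j=1}^n(C_rX_j)\otimes(P_rX_j)$, and since ${\rm Cov}(C_rX_j,P_rX_j)=\mu_rC_rP_r=0$ the Gaussian arrays $U_j:=C_rX_j$ and $V_j:=P_rX_j$ are independent. Writing $Q_1:=\sum_j\|U_j\|^2\|V_j\|^2$ and $Q_2:=\sum_{j\neq k}\langle U_j,U_k\rangle\langle V_j,V_k\rangle$, we get $n^2\|L_r(E)\|_2^2=2Q_1+2Q_2$. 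A Gaussian (Wick) moment computation, using independence across $j$ and of the two arrays, gives ${\mathbb E}Q_2=0$ and ${\rm Var}(Q_2)=2n(n-1)\|C_r\Sigma C_r\|_2^2\|P_r\Sigma P_r\|_2^2$; since $B_r(\Sigma)^2=8\|C_r\Sigma C_r\|_2^2\|P_r\Sigma P_r\|_2^2$ this yields exactly $\frac{n}{B_r(\Sigma)}{\rm Var}^{1/2}(2Q_2/n^2)=\sqrt{1-1/n}$, i.e. the leading value $1$ up to an error $\le1/n$. For the diagonal sum (a sum of $n$ i.i.d.\ terms), a fourth-moment computation gives
\[
{\rm Var}(Q_1)\lesssim n\big(\|C_r\Sigma C_r\|_2^2({\rm tr}(P_r\Sigma P_r))^2+\|P_r\Sigma P_r\|_2^2({\rm tr}(C_r\Sigma C_r))^2\big),
\]
whence $\frac{n}{B_r(\Sigma)}{\rm Var}^{1/2}(2Q_1/n^2)\lesssim n^{-1/2}\big(\sqrt{m_r}+{\rm tr}(C_r\Sigma C_r)/\|C_r\Sigma C_r\|_2\big)$; using ${\rm tr}(C_r\Sigma C_r)\le{\rm tr}(\Sigma)/\bar g_r^2$ and $\bar g_r,\mu_r\le\|\Sigma\|_\infty$ this is absorbed into the first error term of the statement. (Incidentally this reproves ${\mathbb E}\|L_r(E)\|_2^2=A_r(\Sigma)/n$.)

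For (ii), the cross term $\langle{\mathbb E}(\hat P_r-P_r),L_r(E)\rangle$ simplifies via Theorem \ref{bd_bias}: the ``aligned'' part contributes nothing since $\langle P_r({\mathbb E}\hat P_r-P_r)P_r,L_r(E)\rangle=\langle{\mathbb E}\hat P_r-P_r,P_rL_r(E)P_r\rangle=0$, so only $\langle T_r,L_r(E)\rangle$ survives; it equals the centered i.i.d.\ sum $n^{-1}\sum_j\langle NX_j,X_j\rangle$ with $N:=C_rT_rP_r+P_rT_rC_r$ (one checks ${\rm tr}(N\Sigma)=0$), with variance $2n^{-1}{\rm tr}((N\Sigma)^2)\le4n^{-1}\|\Sigma\|_\infty^2m_r\|T_r\|_\infty^2\bar g_r^{-2}$, and $\|T_r\|_\infty$ is bounded by Theorem \ref{bd_bias}. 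The genuinely delicate part, and the main obstacle, is to control ${\rm Var}^{1/2}(\|R_r\|_2^2)$, ${\rm Var}^{1/2}(\langle L_r(E),R_r\rangle)$ and ${\rm Var}^{1/2}(\langle{\mathbb E}(\hat P_r-P_r),R_r\rangle)$: the crude bound $\|S_r(E)\|_\infty\lesssim(\|E\|_\infty/\bar g_r)^2$ from (\ref{remainder_A}) together with ${\rm rank}(S_r(E))\le4m_r$ overshoots the required order by a factor $\sqrt{{\bf r}(\Sigma)}$, so one must instead invoke Theorem \ref{technical_1}, by which the bilinear forms of $R_r$ are of the much smaller order $\frac{\|\Sigma\|_\infty^2}{\bar g_r^2}\sqrt{{\bf r}(\Sigma)}\,n^{-1}$ for fixed directions, in conjunction with the moment bounds (\ref{E1_p}) for $\|E\|_\infty$ and the finite-rank structure of $S_r(E)$. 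It is precisely here that the hypotheses ${\bf r}(\Sigma)=o(n)$ and (\ref{deltanleq}) --- equivalently $\frac{\|\Sigma\|_\infty}{\bar g_r}\sqrt{{\bf r}(\Sigma)/n}\lesssim_{\gamma}1$ --- enter; assembling the leading value $\sqrt{1-1/n}$ from $Q_2$ with the $O(m_r/n)$ and lower-order remainder contributions then yields the claimed bound.
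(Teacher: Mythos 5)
Your overall architecture is right, and your part (i) is correct and essentially complete: the orthogonality $\|L_r(E)\|_2^2=2\|C_rEP_r\|_2^2$, the independence of $C_rX_j$ and $P_rX_j$, and the Wick computation ${\rm Var}(Q_2)=2n(n-1)\|C_r\Sigma C_r\|_2^2\|P_r\Sigma P_r\|_2^2$ are exactly what produce the leading value $1$ and the $\frac{m_r+1}{n}$ term, and your absorption of the diagonal sum $Q_1$ into the first error term checks out. (Bear in mind that this paper does not prove Theorem \ref{th:varvar}; it quotes it from \cite{Koltchinskii_Lounici_HS}. What it does reproduce from that reference is Lemma \ref{HS-conc-bd}, which is the tool designed for precisely the step where your argument is incomplete.)

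The gap is in part (ii), at the very place you call ``the main obstacle''. Theorem \ref{technical_1} controls $\langle R_ru,v\rangle$ for \emph{fixed, deterministic} $u,v$. That suffices for $\langle {\mathbb E}(\hat P_r-P_r),R_r\rangle$, where the directions come from a deterministic finite-rank (or trace-class) operator. But in $\langle L_r(E),R_r\rangle$ and in $\|R_r\|_2^2$ the relevant directions are random and correlated with $R_r$ --- for $\|R_r\|_2^2$ they are the eigenvectors of $R_r$ itself --- so Theorem \ref{technical_1} cannot be invoked as stated, and ``in conjunction with the moment bounds (\ref{E1_p}) and the finite-rank structure'' is an assertion, not an argument: as you yourself compute, the rank-times-operator-norm route overshoots by $\sqrt{{\bf r}(\Sigma)}$, and nothing in the proposal actually recovers that factor for these two terms. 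The clean way to close this is not to decompose beyond $\|\hat P_r-P_r\|_2^2=\|L_r(E)\|_2^2+(\hbox{rest})$ and to apply Lemma \ref{HS-conc-bd} to the ``rest'': integrating its exponential tails in $t$ gives
$$
{\rm Var}^{1/2}\Bigl(\|\hat P_r-P_r\|_2^2-\|L_r(E)\|_2^2\Bigr)\lesssim_{\gamma} m_r\frac{\|\Sigma\|_{\infty}^3}{\bar g_r^3}\frac{{\bf r}(\Sigma)}{n^{3/2}},
$$
which, multiplied by $n/B_r(\Sigma)$, is exactly the first error term of (\ref{variance_bd}). That single Gaussian-concentration bound replaces all four of your remainder variances (including the $T_r$ cross term); establishing it is the real technical content behind the theorem --- a Lipschitz/isoperimetry argument of the type carried out in the proof of Lemma \ref{le:conc} --- and not a consequence of the fixed-direction bilinear bound of Theorem \ref{technical_1}.
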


If $\frac{\|\Sigma\|_{\infty}}{\bar g_r}$ and $m_r$ are bounded and 
$\frac{{\bf r}(\Sigma)}{B_r(\Sigma)\sqrt{n}}\to 0,$ this implies that 
$$
{\rm Var}(\|\hat P_r-P_r\|_2^2)=(1+o(1))\frac{B_r^2(\Sigma)}{n^2}.
$$

The main result of \cite{Koltchinskii_Lounici_HS} is the following normal approximation bounds  
for $\|\hat P_r-P_r\|_2^2:$

\begin{theorem}
\label{th:normal_approx}
Suppose that, for some constants $c_1,c_2>0,$ $m_r\leq c_1$ and $\|\Sigma\|_{\infty}\leq c_2 \bar g_r.$ 
Suppose also condition (\ref{deltanleq}) holds with some $\gamma\in (0,1).$
Then, the following bounds hold with some constant $C>0$ depending only on $\gamma, c_1, c_2:$
\begin{align}
\label{normal_approx_A}
&
\nonumber
\sup_{x\in {\mathbb R}}
\left|{\mathbb P}\left\{\frac{n}{B_r(\Sigma)}\left(\|\hat P_r-P_r\|_2^2-{\mathbb E}\|\hat P_r-P_r\|_2^2\right)\leq x\right\}-\Phi(x)\right|
\\
&
\leq 
C\left[\frac{1}{B_r(\Sigma)}+\frac{{\bf r}(\Sigma)}{B_r(\Sigma)\sqrt{n}}\sqrt{\log \left(\frac{B_r(\Sigma)\sqrt{n}}{{\bf r}(\Sigma)}\bigvee 2\right) }\right]
\end{align}
and
\begin{align}
\label{normal_approx}
&
\nonumber
\sup_{x\in {\mathbb R}}
\left|{\mathbb P}\left\{\frac{\|\hat P_r-P_r\|_2^2-{\mathbb E}\|\hat P_r-P_r\|_2^2}{{\rm Var}^{1/2}(\|\hat P_r-P_r\|_2^2)}\leq x\right\}-\Phi(x)\right|
\\
&
\leq 
C\left[\frac{1}{B_r(\Sigma)}+\frac{{\bf r}(\Sigma)}{B_r(\Sigma)\sqrt{n}}\sqrt{\log \left(\frac{B_r(\Sigma)\sqrt{n}}{{\bf r}(\Sigma)}\bigvee 2\right) }\right],
\end{align}
where $\Phi(x)$ denotes the distribution function of standard normal random variable.
\end{theorem}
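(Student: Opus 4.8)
The plan is to linearize the statistic via perturbation theory, isolate an explicit quadratic functional of the Gaussian sample, and then prove a Berry--Esseen bound for that functional. Write $E=\hat\Sigma-\Sigma$ and use Lemma \ref{lem-pert-spectral} to decompose $\hat P_r-P_r=L_r(E)+S_r(E)$; since $\E L_r(E)=0$, recentering $\hat P_r-P_r$ amounts to recentering $S_r(E)$. Setting $Q_r:=I-P_r$ and using the decomposition of $\hat P_r-P_r$ into its four blocks $P_r(\cdot)P_r$, $P_r(\cdot)Q_r$, $Q_r(\cdot)P_r$, $Q_r(\cdot)Q_r$ (mutually orthogonal in the Hilbert--Schmidt inner product), together with $P_rL_r(E)P_r=Q_rL_r(E)Q_r=0$, $P_rL_r(E)Q_r=P_rEC_r$ and the self-adjointness of $L_r(E)$, one obtains $\|\hat P_r-P_r\|_2^2=2\|P_rEC_r\|_2^2+(\text{remainder})$, where the remainder collects $\|P_rS_r(E)P_r\|_2^2$, $\|Q_rS_r(E)Q_r\|_2^2$, $\|P_rS_r(E)Q_r\|_2^2$ and the cross term $\langle P_rEC_r,P_rS_r(E)Q_r\rangle$. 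The first step is to show that, after recentering, this whole remainder is negligible: each purely second--order block is controlled through $\|S_r(E)\|_\infty\lesssim(\|E\|_\infty/\bar g_r)^2$, Theorem \ref{spectrum_sharper}, the fact that all operators involved have rank $\lesssim m_r$, and Gaussian concentration of the resulting functionals of $(X_1,\dots,X_n)$; the cross term, which a crude bound by $\|E\|_\infty^2$ leaves too large, is handled by writing $S_r(E)=\E S_r(E)+R_r$ and using the bias decomposition of Theorem \ref{bd_bias} for the deterministic part and the refined bilinear--form bound of Theorem \ref{technical_1} for $R_r$ (applied inside the $\lesssim m_r$--dimensional subspaces that arise here). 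Since $m_r$ and $\|\Sigma\|_\infty/\bar g_r$ are bounded, each contribution to $\frac{n}{B_r(\Sigma)}\bigl(\|\hat P_r-P_r\|_2^2-\E\|\hat P_r-P_r\|_2^2\bigr)$ other than the main term is $O_{\mathbb P}\bigl({\bf r}(\Sigma)/(B_r(\Sigma)\sqrt n)\bigr)$, and converting the underlying $e^{-t}$ concentration into a bound on the Kolmogorov distance via the standard smoothing inequality (optimizing over the threshold) produces exactly the factor $\frac{{\bf r}(\Sigma)}{B_r(\Sigma)\sqrt n}\sqrt{\log(\cdots)}$ in the statement.

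It remains to obtain a Berry--Esseen bound for $\frac{2n}{B_r(\Sigma)}\bigl(\|P_rEC_r\|_2^2-\E\|P_rEC_r\|_2^2\bigr)$, and the key here is a distributional identity. Put $Z_j:=P_rX_j$, an $m_r$--dimensional Gaussian vector, and $W_j:=C_rX_j$; because $C_r\Sigma P_r=0$, the families $(Z_j)$ and $(W_j)$ are independent, the $W_j$ are i.i.d.\ $N(0,C_r\Sigma C_r)$, and $\|P_rEC_r\|_2=\|C_rEP_r\|_2$ with $C_rEP_r=n^{-1}\sum_j W_j\otimes Z_j$. Conditioning on $(Z_j)$ and diagonalizing the $n\times n$ Gram matrix $(\langle P_rX_j,P_rX_k\rangle)_{j,k}$, whose at most $m_r$ nonzero eigenvalues $\rho_1,\dots,\rho_{m_r}$ coincide with the eigenvalues of $\sum_j(P_rX_j)\otimes(P_rX_j)$, one gets
$$
\|P_rEC_r\|_2^2\ \stackrel{d}{=}\ \frac{1}{n^2}\sum_{l=1}^{m_r}\rho_l\,\|G_l\|^2 ,
$$
where $G_1,\dots,G_{m_r}$ are i.i.d.\ $N(0,C_r\Sigma C_r)$ and $(\rho_l)\perp(G_l)$. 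This makes the limit transparent: $n^{-1}\rho_l=\mu_r+O_{\mathbb P}(n^{-1/2})$ (a Wishart--type fluctuation, of total relative size ${\bf r}(\Sigma)/(B_r(\Sigma)\sqrt n)$, hence already covered by the remainder bound), while each $\|G_l\|^2=\sum_i s_i\zeta_i^2$ --- with $s_i$ the eigenvalues of $C_r\Sigma C_r$ and $\zeta_i$ i.i.d.\ standard normal --- is asymptotically $N\bigl(\mathrm{tr}(C_r\Sigma C_r),2\|C_r\Sigma C_r\|_2^2\bigr)$ at a Berry--Esseen rate $\lesssim\|C_r\Sigma C_r\|_\infty/\|C_r\Sigma C_r\|_2$, which under the standing assumptions is $\lesssim1/B_r(\Sigma)$. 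Combining the $m_r$ (asymptotically independent) terms and matching variances, the leading part of $2\|P_rEC_r\|_2^2-\E[\,\cdot\,]$ is $\frac{2\mu_r}{n}\sum_{l=1}^{m_r}\bigl(\|G_l\|^2-\mathrm{tr}(C_r\Sigma C_r)\bigr)$, of variance $B_r^2(\Sigma)/n^2$, which yields the normal approximation (\ref{normal_approx_A}). The studentized bound (\ref{normal_approx}) then follows because, by Theorem \ref{th:varvar}, replacing $B_r(\Sigma)$ by ${\rm Var}^{1/2}(\|\hat P_r-P_r\|_2^2)$ changes the scale by a factor $1+o(1)$ that is absorbed into the same error term.

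The hard part is this last quantitative central limit theorem, i.e.\ controlling the Kolmogorov distance of $\sum_{l=1}^{m_r}\rho_l\|G_l\|^2$ with the sharp rate, because one must handle simultaneously two sources of randomness --- the $n^{-1/2}$--scale fluctuations of the Wishart--type eigenvalues $\rho_l$ (coming from the $P_r$--components) and the Gaussian limit of the high--dimensional quadratic forms $\|G_l\|^2$, whose effective dimension grows with $n$ --- and show that their joint effect is governed exactly by the two error terms in the statement. Concretely, this needs a Berry--Esseen bound for $\sum_i s_i(\zeta_i^2-1)$ that is uniform over the range of weight sequences $(s_i)$ that can occur, followed by a careful accounting, through the smoothing inequality for the Kolmogorov distance, of how the conditioning error (controlled by the concentration of $n^{-1}\sum_j(P_rX_j)\otimes(P_rX_j)$ around $P_r\Sigma P_r=\mu_rP_r$) interacts with it. By comparison, the perturbation--theoretic reduction of the first step, though lengthy, is essentially dictated by Theorems \ref{technical_1}, \ref{bd_bias} and \ref{spectrum_sharper} and involves no new idea beyond a careful tracking of the dependence on ${\bf r}(\Sigma)$.
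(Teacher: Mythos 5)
Your proposal is correct and follows essentially the route of the cited source \cite{Koltchinskii_Lounici_HS} (the paper only quotes this theorem from there): reduce $\|\hat P_r-P_r\|_2^2$ to $\|L_r(E)\|_2^2=2\|P_rEC_r\|_2^2$ via the concentration bound recorded here as Lemma \ref{HS-conc-bd}, then prove a Berry--Esseen bound for the latter through the conditional Gaussian representation $\|P_rEC_r\|_2^2\stackrel{d}{=}n^{-2}\sum_{l}\rho_l\|G_l\|^2$ --- exactly the device reused in the proof of Lemma \ref{lemma:joint-technical-2}. Your accounting of the two error terms ($1/B_r(\Sigma)$ from the weighted chi-square Berry--Esseen rate, and $\frac{{\bf r}(\Sigma)}{B_r(\Sigma)\sqrt n}\sqrt{\log(\cdot)}$ from converting the exponential-tail concentration of the remainder and of the Wishart eigenvalues into a Kolmogorov-distance bound, with the studentized version obtained from Theorem \ref{th:varvar}) matches the structure of the stated bound.
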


These bounds show that asymptotic normality of properly normalized statistic $\|\hat P_r-P_r\|_2^2$
holds provided that $n\to\infty,$ $B_r(\Sigma)\to \infty$ and 
$\frac{{\bf r}(\Sigma)}{B_r(\Sigma)\sqrt{n}}\to 0.$ In the case 
of $p$-dimensional spiked covariance models (with a fixed number of spikes), these 
conditions boil down to $n\to \infty,$ $p\to \infty$ and $p=o(n).$ 

\section{Main results}

We start this section with introducing a precise asymptotic framework in which ${\bf r}(\Sigma)\to \infty$
as $n\to\infty.$ It is assumed that an observation $X=X^{(n)}$ is sampled from 
from a Gaussian distributions in ${\mathbb H}$ with mean zero and covariance $\Sigma=\Sigma^{(n)}.$ 
The data consists on $n$ i.i.d. copies of $X^{(n)}:$
$X_1=X_1^{(n)},\dots, X_n=X_n^{(n)}$ and the sample covariance $\hat \Sigma_n$ is based on $(X_1^{(n)},\dots, X_n^{(n)}).$ As before, $\mu_{r}^{(n)}, r\geq 1$ denote distinct nonzero eigenvalues of $\Sigma^{(n)}$ arranged in decreasing order and $P_{r}^{(n)}, r\geq 1$ the corresponding spectral projectors.
Let $\Delta_{r}^{(n)}:=\{j: \sigma_j(\Sigma^{(n)})=\mu_{r}^{(n)}\}$
and let $\hat P_{r}^{(n)}$ be the orthogonal projector on the linear span 
of eigenspaces corresponding to the eigenvalues $\{\sigma_j(\hat \Sigma_n), j\in \Delta_{r}^{(n)}\}.$

The goal is to estimate the spectral projector $P^{(n)}=P_{r_n}^{(n)}$ corresponding 
to the eigenvalue $\mu^{(n)}=\mu_{r_n}^{(n)}$ of $\Sigma^{(n)}$
with multiplicity $m^{(n)}=m_{r_n}^{(n)}$ and with spectral gap  
$\bar g^{(n)}=\bar g_{r_n}^{(n)}.$ 
Define $C^{(n)}=C_{r_n}^{(n)}:= \sum_{s\neq r_n}\frac{1}{\mu_{r_n}^{(n)}-\mu_s^{(n)}}P_s^{(n)}$ and 
let
$$
B_n:=B_{r_n}(\Sigma^{(n)}):= 2\sqrt{2}\|C^{(n)}\Sigma^{(n)}C^{(n)}\|_2\|P^{(n)}\Sigma^{(n)}P^{(n)}\|_2.
$$

\begin{assumption}
\label{ass_sigma_n}
Suppose the following conditions hold:
\begin{equation}
\label{ass_sigma_n_1}
\sup_{n\geq 1}m^{(n)}<+\infty;
\end{equation}
\begin{equation}
 \label{ass_sigma_n_2}
 \sup_{n\geq 1}\frac{\|\Sigma^{(n)}\|_{\infty}}{\bar g^{(n)}}<+\infty; 
\end{equation}
\begin{equation}
\label{ass_sigma_n_3}
B_n\to \infty\ {\rm as}\ n\goin;
\end{equation}
\begin{equation}
\label{ass_sigma_n_3}
\frac{{\bf r}(\Sigma^{(n)})}{B_n\sqrt{n}}\to 0\ {\rm as}\ n\goin.
\end{equation}
\end{assumption}

Assumption \ref{ass_sigma_n}
easily implies that 
$$
{\bf r}(\Sigma^{(n)})\to \infty,\ \ {\bf r}(\Sigma^{(n)})=o(n)\ {\rm as}\ n\to\infty.
$$
Also, under mild additional conditions, $B_n\asymp \|\Sigma^{(n)}\|_2.$

The following fact is an immediate consequence of bound (\ref{variance_bd}) and 
Theorem \ref{th:normal_approx}. 

\begin{proposition}
\label{th_2_norm}
Under Assumption \ref{ass_sigma_n},
$$
\mathrm{Var}(\|\hat P^{(n)}-P^{(n)}\|_2^2) =\left( \frac{B_n}{n}\right)^2 (1+ o(1)).
$$
In addition, 
\begin{align}\label{CLT-sample proj}
\frac{n\Bigl(\|\hat P^{(n)}-P^{(n)}\|_2^2-{\mathbb E}\|\hat P^{(n)}-P^{(n)}\|_2^2\Bigr)}{B_n}\overset{d}\longrightarrow Z
\end{align}
and 
\begin{align}
\label{CLT-sample proj''}
\frac{\Bigl(\|\hat P^{(n)}-P^{(n)}\|_2^2-{\mathbb E}\|\hat P^{(n)}-P^{(n)}\|_2^2\Bigr)}
{\sqrt{\mathrm{Var}(\|\hat P^{(n)}-P^{(n)}\|_2^2)}} \overset{d}\longrightarrow Z \ {\rm as}\ n\to\infty,
\end{align}
$Z$ being a standard normal random variable.
\end{proposition}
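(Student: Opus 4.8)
The plan is to verify that, under Assumption \ref{ass_sigma_n}, the hypotheses of Theorems \ref{th:varvar} and \ref{th:normal_approx} are met for all sufficiently large $n$ and that the error terms appearing in bounds (\ref{variance_bd}), (\ref{normal_approx_A}) and (\ref{normal_approx}) tend to $0$; the three assertions then follow simply by reading off those non-asymptotic bounds in the regime prescribed by Assumption \ref{ass_sigma_n}. The one point needing preliminary attention is condition (\ref{deltanleq}); fix $\gamma=1/2$. As observed immediately after Assumption \ref{ass_sigma_n}, that assumption forces ${\bf r}(\Sigma^{(n)})\to\infty$ and ${\bf r}(\Sigma^{(n)})=o(n)$ (one way to see the latter: (\ref{ass_sigma_n_1})--(\ref{ass_sigma_n_2}) give $B_n\lesssim\sqrt{{\bf r}(\Sigma^{(n)})}$, and then the last condition of Assumption \ref{ass_sigma_n} yields $\sqrt{{\bf r}(\Sigma^{(n)})/n}\lesssim {\bf r}(\Sigma^{(n)})/(B_n\sqrt n)\to 0$). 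Hence, by Theorem \ref{th_operator} and condition (\ref{ass_sigma_n_2}),
\[
\frac{{\mathbb E}\|\hat\Sigma_n-\Sigma^{(n)}\|_\infty}{\bar g^{(n)}}\;\lesssim\;\frac{\|\Sigma^{(n)}\|_\infty}{\bar g^{(n)}}\Bigl(\sqrt{\tfrac{{\bf r}(\Sigma^{(n)})}{n}}\vee \tfrac{{\bf r}(\Sigma^{(n)})}{n}\Bigr)\;\longrightarrow\;0,
\]
so (\ref{deltanleq}) holds with $\gamma=1/2$ once $n$ is large enough.

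Granting (\ref{deltanleq}), the variance statement is immediate from Theorem \ref{th:varvar}: its right-hand side is bounded by $C\,m^{(n)}\bigl(\|\Sigma^{(n)}\|_\infty/\bar g^{(n)}\bigr)^{3}\,{\bf r}(\Sigma^{(n)})/(B_n\sqrt n)+(m^{(n)}+1)/n$, which tends to $0$ because $m^{(n)}$ and $\|\Sigma^{(n)}\|_\infty/\bar g^{(n)}$ are bounded by (\ref{ass_sigma_n_1}) and (\ref{ass_sigma_n_2}), while ${\bf r}(\Sigma^{(n)})/(B_n\sqrt n)\to 0$ and $m^{(n)}/n\to 0$. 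Thus $\tfrac{n}{B_n}\,{\rm Var}^{1/2}(\|\hat P^{(n)}-P^{(n)}\|_2^2)\to 1$, i.e. ${\rm Var}(\|\hat P^{(n)}-P^{(n)}\|_2^2)=(B_n/n)^2(1+o(1))$.

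For the two limit laws, note that under Assumption \ref{ass_sigma_n} one may fix, once and for all, finite constants $c_1:=\sup_n m^{(n)}$ and $c_2:=\sup_n\|\Sigma^{(n)}\|_\infty/\bar g^{(n)}$, so that the hypotheses of Theorem \ref{th:normal_approx} — together with (\ref{deltanleq}), already verified — hold for all large $n$ with a single constant $C=C(\gamma,c_1,c_2)$. Writing $\eps_n:={\bf r}(\Sigma^{(n)})/(B_n\sqrt n)$, the common right-hand side of (\ref{normal_approx_A}) and (\ref{normal_approx}) is $C\bigl[B_n^{-1}+\eps_n\sqrt{\log(\eps_n^{-1}\vee 2)}\,\bigr]$; here $B_n^{-1}\to0$ since $B_n\to\infty$, and $\eps_n\to0$ together with the elementary fact that $x\mapsto x\sqrt{\log(x^{-1}\vee 2)}$ tends to $0$ as $x\downarrow0$ gives $\eps_n\sqrt{\log(\eps_n^{-1}\vee2)}\to0$. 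Hence the Kolmogorov distances in (\ref{normal_approx_A}) and (\ref{normal_approx}) vanish; since $\Phi$ is continuous, this is equivalent to convergence in distribution to $Z\sim N(0,1)$, proving (\ref{CLT-sample proj}) and (\ref{CLT-sample proj''}). Alternatively, (\ref{CLT-sample proj''}) follows from (\ref{CLT-sample proj}) and the variance asymptotics just proved via Slutsky's lemma, as $\tfrac{B_n/n}{{\rm Var}^{1/2}(\|\hat P^{(n)}-P^{(n)}\|_2^2)}\to1$.

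There is no genuine obstacle: the proposition is merely the limiting shadow of the quantitative bounds (\ref{variance_bd}), (\ref{normal_approx_A}), (\ref{normal_approx}). The only steps that are not purely mechanical are the verification of (\ref{deltanleq}) for large $n$ (which hinges on ${\bf r}(\Sigma^{(n)})=o(n)$, itself a consequence of Assumption \ref{ass_sigma_n}) and the elementary asymptotic $x\sqrt{\log(1/x)}\to0$ as $x\downarrow0$, used to absorb the logarithmic factor in the normal-approximation bounds.
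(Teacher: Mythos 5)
Your proposal is correct and follows exactly the route the paper intends: the paper presents this proposition as an immediate consequence of bound (\ref{variance_bd}) and Theorem \ref{th:normal_approx}, and your write-up simply supplies the (routine) verifications — that Assumption \ref{ass_sigma_n} forces ${\bf r}(\Sigma^{(n)})=o(n)$ and hence (\ref{deltanleq}) for large $n$, and that the right-hand sides of (\ref{variance_bd}), (\ref{normal_approx_A}) and (\ref{normal_approx}) vanish in the prescribed regime.
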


Our main goal is to develop a version of these asymptotic results for squared Hilbert--Schmidt 
norm error $\|\hat P^{(n)}-P^{(n)}\|_2^2$ of empirical spectral projector $P^{(n)}$ with a {\it data driven normalization} that, in principle, could lead to constructing confidence sets and statistical tests
for spectral projectors of covariance operator under Assumption \ref{ass_sigma_n}. 
This will be done only in the case when the target spectral projector $P^{(n)}$ is of rank $1$
(that is, $\mu^{(n)}$ is the eigenvalue of multiplicity $m^{(n)}=1$). This problem is also related 
to estimation of the bias parameter $b^{(n)}=b_{r_n}^{(n)}$ of empirical spectral projector $\hat P^{(n)}.$
This parameter and its estimator $\hat b^{(n)}=\hat b_{r_n}^{(n)}$ were introduced in Section \ref{sec:empirical}. In particular, we will prove the asymptotic normality of estimator $\hat b^{(n)}$
with a proper normalization that depends on unknown covariances $\Sigma^{(n)}$ and derive 
the limit distribution of $\hat b^{(n)}$ with a data-driven normalization. 

Let $P^{(n)}=\theta^{(n)}\otimes \theta^{(n)}$ and $\hat P^{(n)}=\hat \theta^{(n)}\otimes \hat \theta^{(n)}$
for unit vectors $\theta^{(n)}, \hat \theta^{(n)}.$ To define the estimator $\hat b^{(n)},$ we need an additional 
independent sample $\tilde X_1^{(n)}, \dots, \tilde X_n^{(n)}$ consisting of i.i.d. copies of $X^{(n)}.$ Let $\tilde \Sigma_n$ denote the sample covariance based on $(\tilde X_1^{(n)},\dots,  \tilde X_n^{(n)})$ and 
let $\tilde P^{(n)}=\tilde \theta^{(n)}\otimes \tilde \theta^{(n)}$ be its empirical spectral projector corresponding to $P^{(n)}.$ It will be assumed that the signs of $\hat \theta^{(n)}, \tilde \theta^{(n)}$ are chosen 
in such a way that $\langle \hat \theta^{(n)}, \tilde \theta^{(n)}\rangle\geq 0.$ Define
$$
\hat b^{(n)}= \langle \hat \theta^{(n)}, \tilde \theta^{(n)}\rangle-1.
$$

\begin{theorem}
\label{th_odin}
Under Assumption \ref{ass_sigma_n}, 
$$
\frac{2n}{B_n}(\hat b^{(n)}-b^{(n)}) \overset{d}\longrightarrow Z \ {\rm as}\ n\to\infty,
$$
$Z$ being a standard normal random variable.
\end{theorem}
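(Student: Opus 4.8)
The plan is to reduce the statement to a central limit theorem for the bilinear-form-type quantity $\langle \hat\theta^{(n)},\tilde\theta^{(n)}\rangle$ by exploiting the perturbation expansion of Lemma \ref{lem-pert-spectral} together with the bias-structure results of Theorem \ref{bd_bias} and Theorem \ref{th:hattheta}. Since $P^{(n)}$ has rank one, write $\hat\theta^{(n)}=\sqrt{1+\hat\beta}\,\theta^{(n)}+\hat h$ where $\hat h\perp\theta^{(n)}$ and $\hat\beta=\langle\hat\theta^{(n)},\theta^{(n)}\rangle^2-1$, and similarly for $\tilde\theta^{(n)}$. Then a direct computation gives
$$
\hat b^{(n)}-b^{(n)} = \langle\hat\theta^{(n)},\tilde\theta^{(n)}\rangle - 1 - b^{(n)}
= \sqrt{(1+\hat\beta)(1+\tilde\beta)} + \langle\hat h,\tilde h\rangle - 1 - b^{(n)}.
$$
Because $\hat X$ and $\tilde X$ samples are independent, $\langle\hat h,\tilde h\rangle$ is a product of two independent vectors each of which is $O_{\mathbb P}(n^{-1/2}B_n^{1/2}\cdot\mathbf{r}(\Sigma)^{-1/2}\cdot\ldots)$; more precisely, using the linear term $L_r(E)$ one sees $\|\hat h\|^2\asymp \|P^{(n)}E C^{(n)}\|^2$, whose expectation is of order $A_r(\Sigma)/n\asymp \mathbf{r}(\Sigma)/n$, but the \emph{inner product} $\langle\hat h,\tilde h\rangle$ has mean zero and standard deviation of order $\sqrt{A_r(\Sigma)}/n\asymp B_n/n$ (up to constants matching the normalization $B_n=2\sqrt2\|C\Sigma C\|_2\|P\Sigma P\|_2$). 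This is the term that carries the Gaussian fluctuation.

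**Next I would** expand each $\sqrt{1+\hat\beta}$. Note $\hat\beta = -\|\hat h\|^2/\|\hat\theta^{(n)}\|^2$ is deterministic in sign and, by Theorem \ref{th:hattheta} and the bound (\ref{b_r_bd}), $\hat\beta = O_{\mathbb P}(\mathbf{r}(\Sigma)/n)$ with expectation $b^{(n)}+o_{\mathbb P}(B_n/n)$; the same for $\tilde\beta$. A second-order Taylor expansion yields $\sqrt{(1+\hat\beta)(1+\tilde\beta)} = 1 + \tfrac12(\hat\beta+\tilde\beta) + (\text{terms of order }(\mathbf{r}(\Sigma)/n)^2)$. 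The quadratic-in-$\mathbf{r}(\Sigma)/n$ remainder is $o_{\mathbb P}(B_n/n)$ precisely under the key assumption $\mathbf{r}(\Sigma^{(n)})/(B_n\sqrt n)\to 0$ (which forces $\mathbf{r}(\Sigma)^2/n^2 = o(B_n/n)$ once one also uses $B_n \gtrsim 1$, or more carefully $B_n\to\infty$). Hence, after multiplying by $2n/B_n$,
$$
\frac{2n}{B_n}(\hat b^{(n)}-b^{(n)}) = \frac{2n}{B_n}\langle\hat h,\tilde h\rangle + \frac{n}{B_n}\big((\hat\beta - \mathbb E\hat\beta) + (\tilde\beta-\mathbb E\tilde\beta)\big) + o_{\mathbb P}(1),
$$
and one must show the $\beta$-fluctuation terms are also $o_{\mathbb P}(1)$: indeed $\hat\beta - \mathbb E\hat\beta = -(\|\hat h\|^2 - \mathbb E\|\hat h\|^2) + (\text{negligible})$ and $\mathrm{Var}(\|\hat h\|^2)$ is of order $B_r^2(\Sigma)/n^2$ by the variance analysis behind Theorem \ref{th:varvar}, so $\frac{n}{B_n}(\hat\beta-\mathbb E\hat\beta) = O_{\mathbb P}(1)$, \emph{not} $o_{\mathbb P}(1)$ — this is a subtlety. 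The resolution is that $\langle\hat h,\tilde h\rangle$, $\|\hat h\|^2$, $\|\tilde h\|^2$ must be treated jointly: writing $u=\hat h$, $v=\tilde h$ as independent approximately-Gaussian vectors in $\mathbb H$ with covariance $\approx n^{-1}C\Sigma C$ restricted appropriately, the quantity $2\langle u,v\rangle - \|u\|^2 - \|v\|^2 = -\|u-v\|^2$; but that's not quite what appears. One should instead verify directly that $\hat b^{(n)} - b^{(n)}$ equals $-\tfrac12\|\hat h - \tilde h\|^2 + o_{\mathbb P}(B_n/n)$ up to the correct centering, i.e. track the bias terms $\mathbb E\hat\beta = b^{(n)}+o(B_n/n)$ carefully so the combination telescopes.

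**The main obstacle** is precisely this bookkeeping: establishing that, to order $o_{\mathbb P}(B_n/n)$, the statistic $\hat b^{(n)}-b^{(n)}$ is \emph{exactly} the leading bilinear/quadratic functional of the linearized errors $L_r(E)$, $L_r(\tilde E)$ whose CLT we control, with all bias corrections ($b^{(n)}$ itself, $\mathbb E\|\hat h\|^2$, the $T_r$ remainder) cancelling correctly. Once this reduction is in place, the CLT is routine: the surviving term is a (possibly degenerate) U-statistic / quadratic form in independent Gaussian vectors, its variance is $(B_n/(2n))^2(1+o(1))$ by direct moment computation matching the definition of $B_n$, and asymptotic normality follows from a martingale CLT or from the Lindeberg method exactly as in Theorem \ref{th:normal_approx} of \cite{Koltchinskii_Lounici_HS}, using $B_n\to\infty$ to kill the higher cumulants and $\mathbf r(\Sigma)/(B_n\sqrt n)\to0$ to kill the contribution of the remainder operators $S_r(E), S_r(\tilde E)$ via Theorem \ref{technical_1}. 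I would organize the write-up as: (i) the algebraic decomposition of $\hat b^{(n)}-b^{(n)}$; (ii) replacement of $\hat h,\tilde h$ by their linear approximations with $o_{\mathbb P}(B_n/n)$ control using Theorem \ref{technical_1} and Theorem \ref{bd_bias}; (iii) the moment/variance computation identifying $B_n$; (iv) the CLT for the resulting quadratic functional; (v) Slutsky to assemble.
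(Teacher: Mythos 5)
Your plan follows essentially the same route as the paper's proof: the paper also reduces $\hat b^{(n)}-b^{(n)}$ to the quantity $(1+\hat b^{(n)})^2-(1+b^{(n)})^2$ (via the exact identity $\hat b-b=\frac{(1+\hat b)^2-(1+b)^2}{2+\hat b+b}$ together with $\hat b^{(n)}, b^{(n)}=o_{\mathbb P}(1)$, which avoids any Taylor expansion of square roots), represents that quantity as $\langle L_r(E),L_r(\tilde E)\rangle-\frac12(\|L_r(E)\|_2^2-{\mathbb E}\|L_r(E)\|_2^2)-\frac12(\|L_r(\tilde E)\|_2^2-{\mathbb E}\|L_r(\tilde E)\|_2^2)$ plus a negligible remainder (Lemma \ref{lemma2}), and proves a joint CLT for these components. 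Your expansion of $\langle\hat\theta^{(n)},\tilde\theta^{(n)}\rangle$ is the same decomposition in different coordinates, and you correctly identify the crucial point that the fluctuations of $\hat\beta=\langle\hat\theta^{(n)},\theta^{(n)}\rangle^2-1$ are of the same order $B_n/n$ as the bilinear term and must enter the limit jointly (indeed $|v|^2=\tfrac12+\tfrac14+\tfrac14=1$ is exactly what makes the limit standard normal).

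There is, however, a genuine gap at your step (ii). Writing $\hat\beta-{\mathbb E}\hat\beta=\langle \hat P^{(n)}-{\mathbb E}\hat P^{(n)},P^{(n)}\rangle=\langle R_r\theta^{(n)},\theta^{(n)}\rangle$ (since $\langle L_r(E),P^{(n)}\rangle=0$), the tools you cite cannot identify this quantity to accuracy $o_{\mathbb P}(B_n/n)$: Theorem \ref{technical_1} only yields $|\langle R_r\theta^{(n)},\theta^{(n)}\rangle|=O_{\mathbb P}(\sqrt{\mathbf{r}(\Sigma^{(n)})}/n)$, and since $B_n\lesssim\sqrt{\mathbf{r}(\Sigma^{(n)})}$ under Assumption \ref{ass_sigma_n} this is \emph{at least} the order of the main term; it bounds the quadratic fluctuation but does not extract its leading part. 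What is needed is that $\langle S_r(E),P_r\rangle+\frac12\|L_r(E)\|_2^2$ concentrates around its expectation at the strictly smaller rate $\lesssim\frac{\mathbf{r}(\Sigma)}{n}\cdot\frac{1}{\sqrt n}=o(B_n/n)$, i.e., in the rank-one case, that $\|\hat h\|^2-\|C_rE\theta^{(n)}\|^2$, once centered, is $o_{\mathbb P}(B_n/n)$. This is the paper's Lemma \ref{le:conc} (equivalently Lemma \ref{HS-conc-bd}), whose proof requires the full perturbation series for $S_r(E)$ (Lemma \ref{lem:pert-representations}) and Gaussian isoperimetric concentration of a truncated Lipschitz functional; it does not follow from Theorem \ref{technical_1} or Theorem \ref{bd_bias}. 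Two smaller points: (a) your claim that Assumption \ref{ass_sigma_n} forces $\mathbf{r}(\Sigma)^2/n^2=o(B_n/n)$ is false (take $\mathbf{r}(\Sigma^{(n)})=n^{0.8}$, $B_n=n^{0.4}$: the assumption holds while $\mathbf{r}^2/(nB_n)=n^{0.2}\to\infty$); the second-order Taylor term is in fact $-\frac18(\hat\beta-\tilde\beta)^2$, in which the $O(\mathbf{r}(\Sigma)/n)$ biases cancel and only the $O_{\mathbb P}(B_n/n)$ fluctuations survive, so the remainder is $O_{\mathbb P}(B_n^2/n^2)=o_{\mathbb P}(B_n/n)$ — salvageable, but not for the reason you give, and the paper's algebraic identity sidesteps the issue entirely. (b) The final CLT is not quite ``routine'': the linearization of $\hat h$ is $n^{-1}\sum_j\langle X_j,\theta^{(n)}\rangle C_rX_j$, which is only conditionally Gaussian, and one needs the distributional identification of Lemma \ref{lemma:joint-technical-2} (conditioning on the projected sample) before Lindeberg's method applies.
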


In order to use this asymptotic normality result for statistical inference about bias parameter 
$b^{(n)},$ one has to find a way to estimate the normalizing factor $\frac{2n}{B_n}$ that depends 
on unknown covariance $\Sigma^{(n)}.$ By the first claim of Proposition \ref{th_2_norm},
under Assumption \ref{ass_sigma_n},
$$
\frac{2n}{B_n} \sim \frac{2}{{\rm Var}^{1/2}(\|\hat P^{(n)}-P^{(n)}\|_2^2)}\ {\rm as}\ n\to\infty.
$$
Thus, equivalently, we need to estimate the variance ${\rm Var}(\|\hat P^{(n)}-P^{(n)}\|_2^2).$
Note that 
$$
{\rm Var}(\|\hat P^{(n)}-P^{(n)}\|_2^2)= 
{\rm Var}\Bigl(\|\hat P^{(n)}\|_2^2+\|P^{(n)}\|_2^2-2\langle \hat P^{(n)}, P^{(n)}\rangle\Bigr)
$$
$$
=
{\rm Var}\Bigl(2-2\langle \hat P^{(n)}, P^{(n)}\rangle\Bigr)= 
4{\rm Var}\Bigl(\langle \hat P^{(n)}, P^{(n)}\rangle\Bigr)
=2 {\mathbb E}\Bigl(\langle \hat P^{(n)}, P^{(n)}\rangle-\langle \tilde P^{(n)}, P^{(n)}\rangle\Bigr)^2.
$$
To estimate the right hand side, consider the third independent sample $\bar X_1^{(n)},\dots, \bar X_n^{(n)}$
consisting of $n$ independent copies of $X^{(n)}$ and denote by $\bar \Sigma_n$ the sample covariance 
based on $(\bar X_1^{(n)},\dots, \bar X_n^{(n)})$ and by $\bar P^{(n)}=\bar \theta^{(n)}\otimes \bar \theta^{(n)}$ its empirical spectral projector corresponding to $P^{(n)}.$ Assume that the sign of $\bar \theta^{(n)}$ is chosen in such a way that $\langle \tilde \theta^{(n)},\bar \theta^{(n)}\rangle \geq 0$
and define 
$$
\tilde b^{(n)}:= \langle \tilde \theta^{(n)},\bar \theta^{(n)}\rangle -1.
$$
We will use 
$$
\langle \hat P^{(n)}, \tilde P^{(n)}\rangle= \langle \hat \theta^{(n)},\tilde \theta^{(n)}\rangle^2=
(1+\hat b^{(n)})^2
$$
as an ``estimator" of $\langle \hat P^{(n)}, P^{(n)}\rangle$ and 
$$
\langle \tilde P^{(n)}, \bar P^{(n)}\rangle= \langle \tilde \theta^{(n)},\bar \theta^{(n)}\rangle^2=
(1+\tilde b^{(n)})^2
$$ 
as an ``estimator" of $\langle \tilde P^{(n)}, P^{(n)}\rangle.$
To estimate ${\rm Var}(\|\hat P^{(n)}-P^{(n)}\|_2^2)\sim \frac{B_n^2}{n^2},$ one can try to use the statistic 
$2\Bigl((1+\hat b^{(n)})^2-(1+\tilde b^{(n)})^2\Bigr)^2.$ In fact, it turns out that 
the sequence 
$$
\frac{n}{B_n}\Bigl((1+\hat b^{(n)})^2-(1+\tilde b^{(n)})^2\Bigr)
\sim 
\frac{(1+\hat b^{(n)})^2-(1+\tilde b^{(n)})^2}{{\rm Var}^{1/2}(\|\hat P^{(n)}-P^{(n)}\|_2^2)}
$$ 
is asymptotically normal with mean zero and variance $\frac{3}{2}$
and 
$$
\frac{{\mathbb E}\Bigl|(1+\hat b^{(n)})^2-(1+\tilde b^{(n)})^2\Bigr|}{{\rm Var}^{1/2}(\|\hat P^{(n)}-P^{(n)}\|_2^2)}
\to \sqrt{\frac{3}{2}}{\mathbb E}|Z|= \sqrt{\frac{3}{\pi}}\ {\rm as}\ n\to\infty.
$$
Therefore, it might be more natural to view $\frac{\pi}{3}\Bigl((1+\hat b^{(n)})^2-(1+\tilde b^{(n)})^2\Bigr)^2$
as an estimator of the variance ${\rm Var}(\|\hat P^{(n)}-P^{(n)}\|_2^2).$ In any case, we are more interested in a data driven version 
of Theorem \ref{th_odin} given below. 

Given $\alpha\in {\mathbb R}, \beta>0,$ let $Y_{\alpha,\beta}$ denote a random variable with density 
$$
\frac{1}{2}\Bigl[\frac{1}{\beta}f\Bigl(\frac{x-\alpha}{\beta}\Bigr)+\frac{1}{\beta}f\Bigl(\frac{x+\alpha}{\beta}\Bigr)\Bigr],
$$ 
$f(x):=\frac{1}{\pi(1+x^2)}, x\in {\mathbb R}$ being the standard Cauchy 
density. The distribution of $Y_{\alpha,\beta}$ is a mixture of two rescaled 
Cauchy densities with locations $\pm \alpha$ and with equal mixing probabilities. 
This distribution (with proper choices of parameters $\alpha, \beta$) occurs naturally as the distribution of 
the ration $\frac{\xi}{|\eta|}$ for mean zero normal random variables $\xi, \eta.$ Namely, the following
(probably, well known) fact holds. Its proof is rather elementary and is left to the reader. 

\begin{proposition}
\label{Cauchy}
Suppose $\xi, \eta$ are mean zero normal random variables with ${\mathbb E}\xi^2=\sigma_{\xi}^2>0,$
${\mathbb E}\eta^2=\sigma_{\eta}^2>0$ and with correlation coefficient $\rho.$ Then 
$$\frac{\xi}{|\eta|}\stackrel{d}{=} Y_{\alpha,\beta}$$
with $\alpha := \frac{\sigma_{\xi}}{\sigma_{\eta}}\rho$ 
and $\beta:= \frac{\sigma_{\xi}}{\sigma_{\eta}}\sqrt{1-\rho^2}.$ 
\end{proposition}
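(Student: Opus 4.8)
The plan is to reduce the statement about $\xi/|\eta|$ to the classical fact that the ratio of two independent mean-zero normals is Cauchy, by first decorrelating $\xi$ from $\eta$. First I would write $\xi = \rho\,\frac{\sigma_\xi}{\sigma_\eta}\,\eta + \zeta$, where $\zeta$ is the residual of the $L^2$-projection of $\xi$ onto $\eta$; since $(\xi,\eta)$ is jointly Gaussian, $\zeta$ is Gaussian, mean zero, independent of $\eta$, with variance ${\mathbb E}\zeta^2 = \sigma_\xi^2(1-\rho^2)$. Then
$$
\frac{\xi}{|\eta|} = \rho\,\frac{\sigma_\xi}{\sigma_\eta}\,\frac{\eta}{|\eta|} + \frac{\zeta}{|\eta|}
= \alpha\,\mathrm{sign}(\eta) + \beta\,\frac{\zeta'}{|\eta|},
$$
where $\zeta' := \zeta/\sqrt{\sigma_\xi^2(1-\rho^2)}$ is standard normal and independent of $\eta$, and $\alpha,\beta$ are exactly as in the statement.

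Next I would use the independence of $\zeta'$ and $\eta$: condition on $\mathrm{sign}(\eta)$. By symmetry of the normal distribution, $\mathrm{sign}(\eta)$ takes values $\pm 1$ with probability $1/2$ each, and conditionally on $|\eta|$ (hence on $\mathrm{sign}(\eta)$, which is independent of $|\eta|$) the variable $\zeta'/|\eta|$ is unchanged in law because $\zeta'$ is independent of the whole variable $\eta$. So the conditional law of $\xi/|\eta|$ given $\mathrm{sign}(\eta)=+1$ is the law of $\alpha + \beta\,\zeta'/|\eta|$, and given $\mathrm{sign}(\eta)=-1$ it is the law of $-\alpha + \beta\,\zeta'/|\eta|$. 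The unconditional law is therefore the equal mixture of these two, which matches the density displayed before Proposition \ref{Cauchy} provided $\zeta'/|\eta|$ is standard Cauchy.

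That last point is the one genuine computation, and it is the classical fact: if $\zeta'$ and $\eta$ are independent standard normals, then $\zeta'/|\eta| \stackrel{d}{=} \zeta'/\eta$ (since $\eta \stackrel{d}{=} -\eta$ and $|\eta| \stackrel{d}{=} |{-\eta}|$, and $\zeta'$ is symmetric) has the standard Cauchy distribution. One can see this either by the standard change of variables — writing $r = \zeta'/\eta$ and integrating out, $\int_{\mathbb R} \frac{1}{2\pi}|t|\,e^{-t^2(1+r^2)/2}\,dt = \frac{1}{\pi(1+r^2)}$ — or by noting $(\zeta',\eta)$ is rotationally invariant so $\arctan(\zeta'/\eta)$ is uniform on $(-\pi/2,\pi/2)$, whose tangent is standard Cauchy. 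Hence $\beta\,\zeta'/|\eta|$ has density $\frac{1}{\beta}f(x/\beta)$, and shifting by $\pm\alpha$ and averaging gives precisely the density of $Y_{\alpha,\beta}$.

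I do not anticipate a real obstacle here: the only subtlety is to be careful that the decorrelating decomposition genuinely produces an independent (not merely uncorrelated) residual, which is where joint Gaussianity of $(\xi,\eta)$ is used, and that $\mathrm{sign}(\eta)$ is independent of the pair $(\zeta',|\eta|)$ so that the conditioning argument is legitimate. Since, as the authors note, the proof is elementary, I would present it at roughly this level of detail and leave the two alternative verifications of the Cauchy fact as a remark.
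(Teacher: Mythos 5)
Your proposal is correct, and in fact the paper supplies no proof of Proposition \ref{Cauchy} at all --- the authors explicitly state that the proof ``is rather elementary and is left to the reader'' --- so there is nothing to compare against; the decomposition $\xi=\rho\frac{\sigma_\xi}{\sigma_\eta}\eta+\zeta$ with an independent Gaussian residual, followed by conditioning on $\mathrm{sign}(\eta)$ and the classical normal-ratio-is-Cauchy fact, is exactly the intended argument. One cosmetic slip: in your display $\xi/|\eta|=\alpha\,\mathrm{sign}(\eta)+\beta\,\zeta'/|\eta|$ the denominator of the second term should be $|\eta|/\sigma_\eta$ (the absolute value of a \emph{standard} normal) for the constant to come out as $\beta=\frac{\sigma_\xi}{\sigma_\eta}\sqrt{1-\rho^2}$; since you treat $\eta$ as standard in the subsequent Cauchy computation, this is clearly what you meant and does not affect the validity of the proof.
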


We now state a data-driven version of Theorem \ref{th_odin}.

\begin{theorem}
\label{th_dva}
Under Assumption \ref{ass_sigma_n}, 
$$
\frac{2(\hat b^{(n)}-b^{(n)})}{\Bigl|(1+\hat b^{(n)})^2-(1+\tilde b^{(n)})^2\Bigr|} \overset{d}\longrightarrow Y_{\alpha,\beta} \ {\rm as}\ n\to\infty,
$$
where $\alpha:=\frac{1}{2},$ $\beta:=\sqrt{\frac{5}{12}}.$
\end{theorem}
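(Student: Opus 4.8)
The plan is to reduce the statement to a joint central limit theorem for the two normalized errors $\hat N_n:=\tfrac{2n}{B_n}(\hat b^{(n)}-b^{(n)})$ and $\tilde N_n:=\tfrac{2n}{B_n}(\tilde b^{(n)}-b^{(n)})$. I will show that $(\hat N_n,\tilde N_n)\overset{d}{\longrightarrow}(N,N')$, a centered bivariate Gaussian with $\mathrm{Var}(N)=\mathrm{Var}(N')=1$ and $\mathrm{Cov}(N,N')=\tfrac14$. Granting this, note that $b^{(n)}\to0$ (since $|b^{(n)}|\lesssim{\bf r}(\Sigma^{(n)})/n$ by (\ref{b_r_bd})) and $\hat b^{(n)}-b^{(n)}=O_{\mathbb P}(B_n/n)=o_{\mathbb P}(1)$ (Theorem~\ref{th_odin}), so $\hat b^{(n)},\tilde b^{(n)}\to0$ in probability; hence
$$
(1+\hat b^{(n)})^2-(1+\tilde b^{(n)})^2=(\hat b^{(n)}-\tilde b^{(n)})(2+\hat b^{(n)}+\tilde b^{(n)})=(\hat b^{(n)}-\tilde b^{(n)})(2+o_{\mathbb P}(1)),
$$
and the statistic in the theorem equals $\dfrac{\hat N_n}{|\hat N_n-\tilde N_n|}\,(1+o_{\mathbb P}(1))$. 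Since $\mathrm{Var}(N-N')=\tfrac32>0$, the map $(x,y)\mapsto x/|x-y|$ is a.e. continuous under the law of $(N,N')$, so the continuous mapping theorem and Slutsky's lemma give convergence in distribution to $N/|N-N'|$; applying Proposition~\ref{Cauchy} with $\xi=N$, $\eta=N-N'$ (so $\sigma_\xi^2=1$, $\sigma_\eta^2=\tfrac32$, $\rho=(1-\tfrac14)/\sqrt{3/2}=\tfrac{\sqrt6}{4}$) identifies the limit as $Y_{\alpha,\beta}$ with $\alpha=(\sigma_\xi/\sigma_\eta)\rho=\tfrac12$ and $\beta=(\sigma_\xi/\sigma_\eta)\sqrt{1-\rho^2}=\sqrt{5/12}$, which is the claim.

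The joint CLT rests on a linear representation. Write $\hat L,\tilde L,\bar L$ for $L_{r_n}(\hat\Sigma_n-\Sigma^{(n)})$, $L_{r_n}(\tilde\Sigma_n-\Sigma^{(n)})$, $L_{r_n}(\bar\Sigma_n-\Sigma^{(n)})$, and $a_1,a_2,a_3$ for $\langle\hat P^{(n)}-P^{(n)},P^{(n)}\rangle=\langle\hat\theta^{(n)},\theta^{(n)}\rangle^2-1$ and its analogues for $\tilde P^{(n)},\bar P^{(n)}$; thus $\mathbb E a_k=b^{(n)}$ and $\|\hat P^{(n)}-P^{(n)}\|_2^2=-2a_1$. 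Since (as in the text) $(1+\hat b^{(n)})^2=\langle\hat P^{(n)},\tilde P^{(n)}\rangle$, we have $\|\hat P^{(n)}-\tilde P^{(n)}\|_2^2=-4\hat b^{(n)}-2(\hat b^{(n)})^2$, and expanding the left side around $P^{(n)}$ yields the exact identity
$$
\hat b^{(n)}-b^{(n)}=\tfrac12(a_1-b^{(n)})+\tfrac12(a_2-b^{(n)})+\tfrac12\langle\hat P^{(n)}-P^{(n)},\tilde P^{(n)}-P^{(n)}\rangle-\tfrac12(\hat b^{(n)})^2
$$
and its analogue for $\tilde b^{(n)}-b^{(n)}$ with $(a_2,a_3,\langle\tilde P^{(n)}-P^{(n)},\bar P^{(n)}-P^{(n)}\rangle,(\tilde b^{(n)})^2)$. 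Plugging $\hat P^{(n)}-P^{(n)}=\hat L+(b^{(n)}P^{(n)}+T^{(n)})+\hat R$ (Lemma~\ref{lem-pert-spectral}, Theorems~\ref{technical_1}, \ref{bd_bias}, with $T^{(n)}$ the remainder in (\ref{Ehat}), $P^{(n)}T^{(n)}P^{(n)}=0$, and $\hat R=R_{r_n}(\hat\Sigma_n-\Sigma^{(n)})$) into the inner product, the dominant term is $\langle\hat L,\tilde L\rangle$ and every other piece is $o_{\mathbb P}(B_n/n)$. The facts used: $\langle\hat L,P^{(n)}\rangle=0$ (as $C^{(n)}P^{(n)}=P^{(n)}C^{(n)}=0$); $\|\hat L\|_2\lesssim({\bf r}(\Sigma^{(n)})/n)^{1/2}$; $\|\hat R\|_2\lesssim{\bf r}(\Sigma^{(n)})/n$ and $\mathbb E\langle\hat Ru,v\rangle^2\lesssim({\bf r}(\Sigma^{(n)})/n^2)\|u\|^2\|v\|^2$ (Theorem~\ref{technical_1}, plus $S_{r_n}$ having rank $\le4$ and $\|S_{r_n}(E)\|_\infty\lesssim(\|E\|_\infty/\bar g^{(n)})^2$); the refined Hilbert–Schmidt bound $\|T^{(n)}\|_2\lesssim\sqrt{{\bf r}(\Sigma^{(n)})}/n$ (sharper than the operator-norm bound of Theorem~\ref{bd_bias}: the reflection invariance of the law of $\hat\theta^{(n)}$ makes $\mathbb E\hat P^{(n)}$ diagonal in the eigenbasis of $\Sigma^{(n)}$, its off-diagonal eigenvalues are $\lesssim 1/n$ by Theorem~\ref{th:hattheta}, and their sum equals $-b^{(n)}\lesssim{\bf r}(\Sigma^{(n)})/n$); and $\hat b^{(n)}-b^{(n)}=O_{\mathbb P}(B_n/n)$ (Theorem~\ref{th_odin}), which lets $-\tfrac12(\hat b^{(n)})^2$ cancel the deterministic $\tfrac12(b^{(n)})^2+\tfrac12\|T^{(n)}\|_2^2$ up to $o_{\mathbb P}(B_n/n)$. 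For the cross terms pairing different subsamples one conditions on one of them (the conditional mean vanishes since $\mathbb E\hat L=\mathbb E\tilde L=\mathbb E\bar L=0$) and bounds the conditional variance by the above; all error estimates are ultimately controlled by Assumption~\ref{ass_sigma_n} ($m^{(n)}$, $\|\Sigma^{(n)}\|_\infty/\bar g^{(n)}$ bounded; ${\bf r}(\Sigma^{(n)})/(B_n\sqrt n)\to0$, hence ${\bf r}(\Sigma^{(n)})/n\to0$; $B_n\to\infty$). This produces
$$
\hat N_n=\tfrac12U_n^{(1)}+\tfrac12U_n^{(2)}+W_n^{(12)}+o_{\mathbb P}(1),\qquad\tilde N_n=\tfrac12U_n^{(2)}+\tfrac12U_n^{(3)}+W_n^{(23)}+o_{\mathbb P}(1),
$$
where $U_n^{(k)}:=\tfrac{2n}{B_n}(a_k-b^{(n)})$ and $W_n^{(12)}:=\tfrac{n}{B_n}\langle\hat L,\tilde L\rangle$, $W_n^{(23)}:=\tfrac{n}{B_n}\langle\tilde L,\bar L\rangle$.

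It then remains to find the joint limit of $(U_n^{(1)},U_n^{(2)},U_n^{(3)},W_n^{(12)},W_n^{(23)})$. The $U_n^{(k)}$ depend on disjoint subsamples, hence are mutually independent, and each $U_n^{(k)}\overset{d}{\longrightarrow}N(0,1)$ by Theorem~\ref{th:normal_approx} / Proposition~\ref{th_2_norm} (via $a_k-b^{(n)}=-\tfrac12(\|\hat P^{(n)}-P^{(n)}\|_2^2-\mathbb E\|\hat P^{(n)}-P^{(n)}\|_2^2)$ for the corresponding subsample). A direct Gaussian moment computation — using $\langle\hat L,\tilde L\rangle=\tfrac{2}{n^2}\sum_{i,j}\langle P^{(n)}X_i^{(n)},P^{(n)}\tilde X_j^{(n)}\rangle\langle C^{(n)}X_i^{(n)},C^{(n)}\tilde X_j^{(n)}\rangle$, $C^{(n)}\theta^{(n)}=0$, and independence of projections onto orthogonal eigenspaces of $\Sigma^{(n)}$ — gives $\mathbb E\langle\hat L,\tilde L\rangle=0$ and $\mathrm{Var}(\langle\hat L,\tilde L\rangle)=\tfrac{4}{n^2}\|P^{(n)}\Sigma^{(n)}P^{(n)}\|_2^2\|C^{(n)}\Sigma^{(n)}C^{(n)}\|_2^2=\tfrac{B_n^2}{2n^2}$, so $\mathrm{Var}(W_n^{(12)})\to\tfrac12$ and $W_n^{(12)}\overset{d}{\longrightarrow}N(0,\tfrac12)$ (by a bilinear-form CLT, e.g. a martingale CLT conditionally on one subsample), and likewise $W_n^{(23)}$. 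Conditioning on the relevant subsample and using $\mathbb E\hat L=\mathbb E\tilde L=\mathbb E\bar L=0$ shows $\mathrm{Cov}(U_n^{(k)},W_n^{(12)})\to0$ for every $k$ and $\mathrm{Cov}(W_n^{(12)},W_n^{(23)})\to0$. A Cramér–Wold argument then yields joint asymptotic normality — any linear combination is a degenerate $U$-statistic of order $\le2$ in the pooled sample, to which the standard CLT applies under the negligibility guaranteed by ${\bf r}(\Sigma^{(n)})/(B_n\sqrt n)\to0$ and $B_n\to\infty$ — and, since the coordinates are pairwise uncorrelated apart from their variances, the limit has independent coordinates $\zeta_1,\zeta_2,\zeta_3\sim N(0,1)$, $\omega_{12},\omega_{23}\sim N(0,\tfrac12)$. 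Hence $(\hat N_n,\tilde N_n)\overset{d}{\longrightarrow}(N,N')=(\tfrac12\zeta_1+\tfrac12\zeta_2+\omega_{12},\ \tfrac12\zeta_2+\tfrac12\zeta_3+\omega_{23})$, a centered bivariate Gaussian with $\mathrm{Var}(N)=\mathrm{Var}(N')=\tfrac14+\tfrac14+\tfrac12=1$ and $\mathrm{Cov}(N,N')=\mathrm{Var}(\tfrac12\zeta_2)=\tfrac14$ (the only shared randomness being the middle subsample $\tilde X^{(n)}$ through $\tfrac12\zeta_2$), completing the joint CLT and hence, by the first paragraph, the proof.

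The main obstacle is the linearization of the second paragraph: establishing the refined bound $\|T^{(n)}\|_2\lesssim\sqrt{{\bf r}(\Sigma^{(n)})}/n$ and, above all, verifying that each of the many remainder terms (cross terms with $T^{(n)}$, products $\langle\hat L,\tilde R\rangle$, $\langle\hat R,\tilde R\rangle$, the deterministic $(b^{(n)})^2$ piece, etc.) is $o_{\mathbb P}(B_n/n)$ under Assumption~\ref{ass_sigma_n}, which for the between-subsample terms needs careful conditional-variance estimates rather than crude norm bounds. The bilinear-form CLT for $W_n^{(12)},W_n^{(23)}$ and the joint CLT of the mixed vector are more standard but still have to be spelled out.
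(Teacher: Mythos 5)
Your proposal is correct and follows essentially the same route as the paper: linearize $\hat b^{(n)}-b^{(n)}$ and $\tilde b^{(n)}-b^{(n)}$ in terms of the five functionals $\langle L(E),L(\tilde E)\rangle,$ $\langle L(\tilde E),L(\bar E)\rangle$ and the three centered $\|L(\cdot)\|_2^2$ (your vector $(U_n^{(1)},U_n^{(2)},U_n^{(3)},W_n^{(12)},W_n^{(23)})$ is, up to signs and factors of $\sqrt{2},$ the paper's $\frac{n}{B_n}\Xi^{(n)}$), prove a joint CLT for them, and finish with the continuous mapping theorem and Proposition \ref{Cauchy}; your covariance structure ($\mathrm{Var}=1,$ $\mathrm{Cov}=\frac14$) matches $\langle v,w\rangle=\frac14$ for the paper's coefficient vectors, so the parameters $\alpha=\frac12,$ $\beta=\sqrt{5/12}$ come out the same. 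Two minor remarks: the refined bound $\|T^{(n)}\|_2\lesssim \sqrt{{\bf r}(\Sigma^{(n)})}/n$ that you flag as a main obstacle is unnecessary, since the paper's bound (\ref{T_r_norm}) already yields $\|T^{(n)}\|_2^2=o(B_n/n)$ under Assumption \ref{ass_sigma_n}; and invoking ``the standard CLT for degenerate $U$-statistics'' is imprecise (such statistics generically have non-Gaussian chaos limits) --- the Gaussian limit here comes precisely from $\sup_{k}\lambda_k^{(n)}/\bar B_n\to 0,$ which is what the paper's Lemmas \ref{lemma:joint-technical-2} and \ref{joint-clt-theta} establish via an explicit diagonalization and Lindeberg's condition.
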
 

Quite similarly, we will determine the asymptotic distribution of statistic $\|\hat P^{(n)}-P^{(n)}\|_2^2$
with a data-driven normalization. First note that 
\begin{align}
\label{risk-bias}
&
\nonumber
{\mathbb E}\|\hat P^{(n)}-P^{(n)}\|_2^2=
{\mathbb E}\Bigl(\|\hat P^{(n)}\|_2^2+\|P^{(n)}\|_2^2-2\langle \hat P^{(n)}, P^{(n)}\rangle\Bigr)
={\mathbb E}\Bigl(2-2\langle \hat P^{(n)}, P^{(n)}\rangle\Bigr)
\\
&
=
2-2\langle {\mathbb E}\hat P^{(n)}, P^{(n)}\rangle
=2-2(1+b^{(n)})\langle P^{(n)}, P^{(n)}\rangle =
-2b^{(n)}
\end{align}
(see Theorem \ref{bd_bias} and the comments after this theorem). 
In the data-driven version of (\ref{CLT-sample proj''})
we will replace ${\mathbb E}\|\hat P^{(n)}-P^{(n)}\|_2^2$ by 
its estimator $-2\hat b^{(n)}$ and the standard deviation ${\rm Var}^{1/2}(\|\hat P^{(n)}-P^{(n)}\|_2^2)$ 
by $\Bigl|(1+\hat b^{(n)})^2-(1+\tilde b^{(n)})^2\Bigr|.$
This yields the following result. 

\begin{theorem}
\label{th_tri}
Under Assumption \ref{ass_sigma_n}, 
$$
\frac{\|\hat P^{(n)}-P^{(n)}\|_2^2+2\hat b^{(n)}}{\Bigl|(1+\hat b^{(n)})^2-(1+\tilde b^{(n)})^2\Bigr|} \overset{d}\longrightarrow Y_{\alpha,\beta} \ {\rm as}\ n\to\infty,
$$
where $\alpha:=\frac{5}{6},$ $\beta:=\frac{\sqrt{47}}{6}.$
\end{theorem}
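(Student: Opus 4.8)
The plan is to reduce Theorem \ref{th_tri} to the joint asymptotic analysis already set up for Theorems \ref{th_odin} and \ref{th_dva}, exploiting the key algebraic identity $\|\hat P^{(n)}-P^{(n)}\|_2^2 = 2 - 2\langle \hat P^{(n)}, P^{(n)}\rangle = 2 - 2(1+b^{(n)})^{-1}\langle \hat\theta^{(n)},\theta^{(n)}\rangle^2$ and the identity $\mathbb E\|\hat P^{(n)}-P^{(n)}\|_2^2 = -2b^{(n)}$ from (\ref{risk-bias}). First I would write the numerator as
\begin{align*}
\|\hat P^{(n)}-P^{(n)}\|_2^2 + 2\hat b^{(n)}
&= \bigl(\|\hat P^{(n)}-P^{(n)}\|_2^2 - \mathbb E\|\hat P^{(n)}-P^{(n)}\|_2^2\bigr) + 2(\hat b^{(n)} - b^{(n)}),
\end{align*}
so the numerator is a sum of the centered squared Hilbert–Schmidt error and twice the centered bias estimator. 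The strategy is then to expand each of these two pieces to first order in the i.i.d. sums driving $\hat\Sigma_n$ and $\tilde\Sigma_n$ (using Lemma \ref{lem-pert-spectral}, Theorem \ref{technical_1}, and Theorem \ref{bd_bias} to control remainders), identify the leading Gaussian-quadratic-form contributions, and show the denominator $\bigl|(1+\hat b^{(n)})^2-(1+\tilde b^{(n)})^2\bigr|$ is (to leading order) a rescaled absolute value of an independent-enough normal, so that the ratio converges to a $Y_{\alpha,\beta}$ law via Proposition \ref{Cauchy}.

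The concrete steps I would carry out: (1) Using the perturbation expansion, write $\langle\hat\theta^{(n)},\theta^{(n)}\rangle^2 = 1 + \langle (\hat P^{(n)} - P^{(n)})\theta^{(n)},\theta^{(n)}\rangle$ and, via the linear term $L_r(E)$, express $\langle\hat P^{(n)}-P^{(n)},P^{(n)}\rangle - \mathbb E\langle\hat P^{(n)}-P^{(n)},P^{(n)}\rangle$ as $n^{-1}\sum_j \zeta_j + o_{\mathbb P}(B_n/n)$ for suitable i.i.d. mean-zero $\zeta_j$ quadratic in the $X_j$; the scaling $B_n/n$ is exactly the standard deviation identified in Proposition \ref{th_2_norm}. (2) Do the same for $\hat b^{(n)} = \langle\hat\theta^{(n)},\tilde\theta^{(n)}\rangle - 1$: expand to get $\hat b^{(n)} - b^{(n)} = \tfrac12\langle(\hat P^{(n)}-P^{(n)})\theta^{(n)},\theta^{(n)}\rangle + \tfrac12\langle(\tilde P^{(n)}-P^{(n)})\theta^{(n)},\theta^{(n)}\rangle - \mathbb E(\cdots) + o_{\mathbb P}(B_n/n)$, which is where the factor $\tfrac{2n}{B_n}$ in Theorem \ref{th_odin} comes from. (3) Similarly linearize $(1+\hat b^{(n)})^2-(1+\tilde b^{(n)})^2 = \langle\hat\theta^{(n)},\tilde\theta^{(n)}\rangle^2 - \langle\tilde\theta^{(n)},\bar\theta^{(n)}\rangle^2$, obtaining a leading term that is a difference of the $\hat{}$, $\tilde{}$, and $\bar{}$ linear statistics. (4) Assemble a joint multivariate CLT for the vector whose components are the (rescaled by $n/B_n$) leading terms of the numerator and denominator; since the three samples are independent, the covariance structure is computed from a handful of moment calculations of Gaussian quadratic forms $\mathrm{tr}(P^{(n)}\Sigma^{(n)}P^{(n)})$, $\mathrm{tr}(C^{(n)}\Sigma^{(n)}C^{(n)})$, etc., all of which collapse into $B_n$-normalized constants under Assumption \ref{ass_sigma_n}. (5) Read off the correlation $\rho$ and variance ratio between numerator and denominator limits; Proposition \ref{Cauchy} then gives $\alpha = \sigma_{\xi}\rho/\sigma_{\eta}$ and $\beta = (\sigma_{\xi}/\sigma_{\eta})\sqrt{1-\rho^2}$, and plugging in should yield $\alpha = 5/6$, $\beta = \sqrt{47}/6$. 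I would double-check consistency with Theorem \ref{th_dva} (where the analogous computation gives $\alpha = 1/2$, $\beta = \sqrt{5/12}$), since the numerators there and here differ only by the extra centered-error term, whose variance ($\tfrac12$ of the variance of the numerator in (\ref{CLT-sample proj''})) and covariance with the bias term feed directly into the new constants.

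The main obstacle I anticipate is the bookkeeping in step (4)–(5): correctly tracking the covariance between the centered squared-error statistic $\|\hat P^{(n)}-P^{(n)}\|_2^2 - \mathbb E(\cdot)$ and the bias-difference statistic $\hat b^{(n)}-b^{(n)}$, both of which are built from the \emph{same} first subsample's linear term $\langle L_{r_n}(\hat E)\theta^{(n)},\theta^{(n)}\rangle$ (where $\hat E = \hat\Sigma_n - \Sigma^{(n)}$), while the denominator mixes in the independent $\tilde{}$ and $\bar{}$ contributions. One must be careful that the numerator's two pieces are \emph{perfectly correlated} through the $\hat{}$-sample linear term but each also carries independent $\tilde{}$-sample pieces, so the limiting $(\xi,\eta)$ pair is genuinely jointly Gaussian with a nontrivial $\rho$. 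A secondary technical point is verifying that all quadratic remainders — from $S_r(E)$ in Lemma \ref{lem-pert-spectral}, from $R_r$ in Theorem \ref{technical_1}, and from the bias remainder $T_r$ in Theorem \ref{bd_bias} — are $o_{\mathbb P}(B_n/n)$ under Assumption \ref{ass_sigma_n}; this is where the hypothesis ${\bf r}(\Sigma^{(n)})/(B_n\sqrt n)\to 0$ is used, exactly as in the proof of Theorem \ref{th:normal_approx}. Everything else is routine Gaussian moment arithmetic plus Slutsky-type arguments to pass from the joint CLT to the ratio limit.
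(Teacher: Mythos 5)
Your overall architecture coincides with the paper's: you split the numerator as $\bigl(\|\hat P^{(n)}-P^{(n)}\|_2^2-\mathbb E\|\hat P^{(n)}-P^{(n)}\|_2^2\bigr)+2(\hat b^{(n)}-b^{(n)})$ using $\mathbb E\|\hat P^{(n)}-P^{(n)}\|_2^2=-2b^{(n)}$, normalize numerator and denominator by $n/B_n$, prove a joint Gaussian limit, and invoke Proposition \ref{Cauchy}. This is exactly the skeleton of (\ref{data-driven-interm1}). However, the middle of your argument rests on a wrong picture of where the leading-order randomness comes from, and this is a genuine gap. For every functional appearing here the \emph{linear} perturbation term is annihilated: $\langle L_r(E),P_r\rangle=0$ because $P_rC_r=0$, so $\langle(\hat P^{(n)}-P^{(n)})\theta^{(n)},\theta^{(n)}\rangle$, $\hat b^{(n)}-b^{(n)}$ and $\|\hat P^{(n)}-P^{(n)}\|_2^2$ all fluctuate only at \emph{second} order in $E$. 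The leading terms are $\langle L(E),L(\tilde E)\rangle$ and $\|L(E)\|_2^2-\mathbb E\|L(E)\|_2^2$ (Lemma \ref{lemma2} and the vector $\Xi^{(n)}$); these are degenerate quadratic/bilinear forms, not sums $n^{-1}\sum_j\zeta_j$ of i.i.d.\ terms over the sample index, and the Gaussian limit is obtained by a CLT over the \emph{eigendirections} as ${\bf r}(\Sigma^{(n)})\to\infty$ (Lemmas \ref{lemma:joint-technical-2} and \ref{joint-clt-theta}), not by a CLT in $n$. Relatedly, your step (2) expansion of $\hat b^{(n)}-b^{(n)}$ keeps only the two ``own-sample'' terms $\tfrac12\langle(\hat P-P)\theta,\theta\rangle+\tfrac12\langle(\tilde P-P)\theta,\theta\rangle$ and drops the cross term coming from $\langle\hat\theta-\sqrt{1+b}\,\theta,\ \tilde\theta-\sqrt{1+b}\,\theta\rangle$, i.e.\ the $\langle L(E),L(\tilde E)\rangle$ component; that term carries half of the asymptotic variance (it is the first coordinate of $v$), so your covariance bookkeeping in steps (4)--(5) would produce wrong constants.

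The second gap is your claim that Theorem \ref{technical_1} (with Lemma \ref{lem-pert-spectral} and Theorem \ref{bd_bias}) suffices to show the remainders are $o_{\mathbb P}(B_n/n)$. For fixed $t$, the bound (\ref{remaind}) on $\langle R_r\theta,\theta\rangle$ is of order $\sqrt{{\bf r}(\Sigma)}/n$, which is the \emph{same} order as $B_n/n$ (since $B_n\lesssim\sqrt{{\bf r}(\Sigma)}$ under Assumption \ref{ass_sigma_n}), so it cannot be absorbed into the error. The whole point of the paper's Lemma \ref{le:conc} --- proved via the contour-integral series for $S_r(E)$ (Lemma \ref{lem:pert-representations}), a Lipschitz estimate for the truncated functional (Lemma \ref{Lipschitz_constant}) and Gaussian isoperimetry --- is that the \emph{combination} $\langle S_r(E),P_r\rangle+\tfrac12\|L_r(E)\|_2^2$ concentrates at the faster rate $\frac{{\bf r}(\Sigma)}{n}\sqrt{t/n}$, which is $o(B_n/n)$ precisely under ${\bf r}(\Sigma)/(B_n\sqrt n)\to0$, while the subtracted piece $-\tfrac12\|L_r(E)\|_2^2$ is retained as part of the leading statistic (the $-\tfrac12$ entries of $v$ and $w$). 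Your proposal contains no substitute for this step. Finally, a word of caution on the arithmetic you defer to the end: with the paper's own leading-order vectors $u=(0,0,1,0,0)$, $v=(\tfrac1{\sqrt2},0,-\tfrac12,-\tfrac12,0)$, $w=(0,\tfrac1{\sqrt2},0,-\tfrac12,-\tfrac12)$ one gets $\langle u+v,v-w\rangle=\tfrac14$, $|v-w|^2=\tfrac32$, $|u+v|^2=1$, hence $\alpha=\tfrac16$ and $\beta=\sqrt{23}/6$ rather than the stated $\tfrac56$ and $\sqrt{47}/6$; so ``plugging in should yield $\alpha=5/6$'' is not something to take on faith --- this covariance computation is exactly where the proof lives, and it must be carried out explicitly.
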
 

\section{Proofs: preliminary lemmas}

We start with preliminary results that will be formulated in the  ``non-asymptotic framework" of 
Section \ref{sec:empirical} and the notations of that section will be used.
Recall that $X_1,\dots ,X_n$ and $\tilde X_1,\dots, \tilde X_n$ are two samples 
each of size $n$ of i.i.d. copies of $X,$ $\hat \Sigma$ and $\tilde \Sigma$ are sample 
covariances based on $(X_1,\dots, X_n)$ and $(\tilde X_1,\dots, \tilde X_n),$
respectively, and $E:=\hat \Sigma-\Sigma,$ $\tilde E:=\tilde \Sigma-\Sigma.$

In what follows, we will use a concentration result for $\|\hat P_r-P_r\|_2^2-\|L_r(E)\|_2^2$ that was obtained 
in \cite{Koltchinskii_Lounici_HS} (see Theorem 5 there) and played a crucial role in the derivation of normal approximation bound 
of Theorem \ref{th:normal_approx}.

\begin{lemma}
\label{HS-conc-bd}
Suppose that for some $\gamma\in (0,1)$ condition (\ref{deltanleq}) holds. Then, for all $t\geq 1,$ with probability at least 
$1-e^{-t}$
\begin{align}
&
\nonumber
\Bigl|\|\hat P_r-P_r\|_2^2 - \|L_r(E)\|_2^2 - {\mathbb E}(\|\hat P_r-P_r\|_2^2 - \|L_r(E)\|_2^2)\Bigr|
\\
&
\lesssim_{\gamma} m_r \frac{\|\Sigma\|_{\infty}^3}{\bar g_r^3} \biggl(\frac{{\bf r}(\Sigma)}{n}\bigvee \frac{t}{n}\bigvee 
\biggl(\frac{t}{n}\biggr)^2\biggr)\sqrt{\frac{t}{n}}.
\end{align}
\end{lemma}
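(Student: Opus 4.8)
\textbf{Proof plan for Lemma \ref{HS-conc-bd}.}
The statement is quoted verbatim from \cite{Koltchinskii_Lounici_HS} (Theorem 5 there), so the cleanest route is simply to cite that paper; but for completeness I sketch how the bound is obtained. The idea is to exploit the perturbation decomposition $\hat P_r-P_r=L_r(E)+S_r(E)$ of Lemma \ref{lem-pert-spectral}, which gives
$$
\|\hat P_r-P_r\|_2^2-\|L_r(E)\|_2^2=2\langle L_r(E),S_r(E)\rangle+\|S_r(E)\|_2^2.
$$
Thus the random variable to be controlled is a functional of $E=\hat\Sigma-\Sigma$ of the form $\Psi(E):=2\langle L_r(E),S_r(E)\rangle+\|S_r(E)\|_2^2$, and the claim is that $\Psi(E)-\mathbb E\Psi(E)$ concentrates at the stated rate. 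Condition (\ref{deltanleq}), combined with the concentration bound of Theorem \ref{spectrum_sharper}, guarantees that on an event of probability at least $1-e^{-t}$ one has $\|E\|_\infty<\frac{\bar g_r}{2}$, so that all the perturbation-series expansions converge and $\hat P_r$ is a genuine small perturbation of $P_r$.

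The main step is a Gaussian concentration argument for $\Psi$ viewed as a function of the data $(X_1,\dots,X_n)$, or equivalently of the Gaussian matrix whose columns are the $X_j$'s. One writes $X_j=\Sigma^{1/2}Z_j$ with $Z_j$ standard Gaussian, so $\Psi$ becomes a function $F$ of a standard Gaussian vector in a (possibly infinite-dimensional) Hilbert space, and applies the Gaussian concentration inequality to $F$ restricted to the event $\{\|E\|_\infty<\frac{\bar g_r}{2}\}$ (after a standard truncation/extension argument to make $F$ globally Lipschitz). The crux is therefore to bound the Lipschitz constant of $F$ on that event. Differentiating through the perturbation series, on $\{\|E\|_\infty\le(1-\gamma')\frac{\bar g_r}{2}\}$ one gets that the derivative of $S_r(E)$ in the direction of a perturbation $H$ of $E$ is $O_\gamma\!\big(\frac{\|E\|_\infty}{\bar g_r^2}\|H\|_\infty\big)$ in operator norm and that the derivative of $L_r(E)$ is $O\!\big(\frac{1}{\bar g_r}\|H\|_\infty\big)$; combining these, $\langle\cdot\rangle$-products, and the multiplicity factor $m_r$ (which enters when passing from operator norm to Hilbert–Schmidt norm for the rank-$\le m_r$ pieces), yields a bound on the local Lipschitz constant of $F$ of order $m_r\frac{\|\Sigma\|_\infty}{\bar g_r}\cdot\frac{\|\Sigma\|_\infty^2}{\bar g_r^2}\cdot\big(\text{size of }\|E\|_\infty\big)$. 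Feeding in the high-probability bound $\|E\|_\infty\lesssim\|\Sigma\|_\infty\big(\sqrt{\mathbf r(\Sigma)/n}\vee\mathbf r(\Sigma)/n\vee\sqrt{t/n}\vee t/n\big)$ from (\ref{sha_sha}), and then using the Gaussian concentration bound $|F-\mathbb EF|\lesssim(\text{Lipschitz constant})\sqrt{t/n}$, produces exactly the right-hand side
$$
m_r\frac{\|\Sigma\|_\infty^3}{\bar g_r^3}\Big(\frac{\mathbf r(\Sigma)}{n}\vee\frac{t}{n}\vee\Big(\frac{t}{n}\Big)^2\Big)\sqrt{\frac{t}{n}},
$$
after observing that $\big(\sqrt{\mathbf r(\Sigma)/n}\vee\cdots\big)^2=\mathbf r(\Sigma)/n\vee\cdots$ absorbs the square of the deviation bound. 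One also has to check that the discrepancy between $\mathbb EF$ (the truncated/extended functional) and $\mathbb E\Psi(E)$ restricted to the good event is of the same order or smaller, which follows from the exponentially small probability of the complementary event together with the crude bound $\|S_r(E)\|_\infty\le 4\|E\|_\infty/\bar g_r$ on a slightly larger event.

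The step I expect to be the main obstacle is the careful bookkeeping of the local Lipschitz constant of $\Psi$ along the perturbation expansion: one must differentiate the resolvent/Riesz-projector representation $\hat P_r=\frac{1}{2\pi i}\oint(\zeta I-\hat\Sigma)^{-1}d\zeta$ in the direction $H$, keep the remainder after the linear term, and track the multiplicity factor $m_r$ and the powers of $\|\Sigma\|_\infty/\bar g_r$ through the Hilbert–Schmidt norms of rank-bounded operators — all while staying on the event where $\|E\|_\infty$ is safely below $\bar g_r/2$ so that the contour can be chosen at distance $\asymp\bar g_r$ from the relevant cluster of eigenvalues. Once the Lipschitz estimate is in hand, the passage to the concentration inequality and the simplification of the rate are routine. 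Since all of this is carried out in detail in \cite{Koltchinskii_Lounici_HS}, we simply invoke that reference.
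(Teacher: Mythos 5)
Your proposal matches the paper exactly: the paper does not prove Lemma \ref{HS-conc-bd} either, but simply imports it as Theorem 5 of \cite{Koltchinskii_Lounici_HS}, and your supplementary sketch (decompose $\|\hat P_r-P_r\|_2^2-\|L_r(E)\|_2^2=2\langle L_r(E),S_r(E)\rangle+\|S_r(E)\|_2^2$, truncate on the event $\{\|E\|_{\infty}\lesssim\delta\}$, bound the Lipschitz constant through the perturbation series, and apply Gaussian concentration) is precisely the strategy the paper itself carries out for the closely analogous Lemma \ref{le:conc} via Lemmas \ref{Gaussian_concentration} and \ref{Lipschitz_constant}. The only detail your sketch glosses over is the degenerate case $\delta_n(t)>\bar g_r/24$ (where the truncation argument is replaced by a crude deterministic bound) and the median-to-expectation step, both routine and handled the same way in the cited reference.
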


The first new result of this section is a useful representation for $(1+\hat b_r)^2- (1+b_r)^2$ 
that will be crucial in our proofs.

\begin{lemma}\label{lemma2}
Suppose for some $\gamma\in (\frac{23}{24},1)$ condition (\ref{deltanleq}) holds.
Then, there exists a constant $D_1>0$ such that the following 
representation holds 
\begin{align}
\label{bias-interm2}
(1+ \hat b_r)^2 - (1+b_r)^2
&= \bigl \langle  L_r(E) , L_r(\tilde E)  \bigr \rangle - \frac{1}{2}\bigl( \|L_r(E)\|_2^2 - \mathbb E \|L_r(E)\|_2^2  \bigr)\notag\\
&  -\frac{1}{2}\bigl( \|L_r(\tilde E)\|_2^2 - \mathbb E \|L_r(\tilde E)\|_2^2  \bigr)+ \Upsilon_r
\end{align}
with the remainder term $\Upsilon_r$ that, for all $t\geq 1,$ with probability at least 
$1-e^{-t}$ satisfies the bound 
\begin{equation}
\label{bd_upsilon}
|\Upsilon_r| \leq D_{1}\frac{\|\Sigma\|_{\infty}^4}{\bar g_r^4} \biggl(\frac{{\bf r}(\Sigma)}{n}\bigvee 
\frac{t}{n} \bigvee \left( \frac{t}{n}\right)^3\biggr)\left( \sqrt{\frac{t}{n}} \bigvee \frac{t}{n} \right). 
\end{equation}
\end{lemma}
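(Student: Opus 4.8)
The plan is to start from the definitions $\hat b_r = \langle \hat\theta_r,\tilde\theta_r\rangle - 1$ and $b_r = \langle {\mathbb E}\hat P_r - P_r,\theta_r\otimes\theta_r\rangle$, and to convert the scalar product $\langle\hat\theta_r,\tilde\theta_r\rangle$ into something expressed through the projectors $\hat P_r$ and $\tilde P_r$. Since $P_r = \theta_r\otimes\theta_r$ is rank one and the signs are chosen so that $\langle\hat\theta_r,\tilde\theta_r\rangle\geq 0$, one has $\langle\hat\theta_r,\tilde\theta_r\rangle^2 = \langle\hat P_r,\tilde P_r\rangle$, hence $(1+\hat b_r)^2 = \langle\hat P_r,\tilde P_r\rangle$. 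Similarly $(1+b_r)^2 = (1+b_r)^2\langle P_r,P_r\rangle = \langle {\mathbb E}\hat P_r - T_r, {\mathbb E}\tilde P_r - T_r\rangle$ up to terms that involve $T_r$, which by Theorem \ref{bd_bias} is of smaller order; more cleanly, using (\ref{Ehat}) and $P_rT_rP_r=0$ one gets $(1+b_r)^2 = \langle {\mathbb E}\hat P_r, P_r\rangle\langle {\mathbb E}\tilde P_r, P_r\rangle$. So the first step is the algebraic reduction
$$
(1+\hat b_r)^2 - (1+b_r)^2 = \langle\hat P_r,\tilde P_r\rangle - \langle {\mathbb E}\hat P_r,P_r\rangle\langle {\mathbb E}\tilde P_r,P_r\rangle,
$$
keeping careful track of the $T_r$ contributions, which will be absorbed into $\Upsilon_r$ via the bound (\ref{T_r}).

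The second step is to plug in the perturbation expansion of Lemma \ref{lem-pert-spectral}: $\hat P_r - P_r = L_r(E) + S_r(E)$ and likewise for $\tilde E$. Writing $\hat P_r = P_r + L_r(E) + S_r(E)$, $\tilde P_r = P_r + L_r(\tilde E) + S_r(\tilde E)$, and expanding $\langle\hat P_r,\tilde P_r\rangle$, the constant term $\langle P_r,P_r\rangle = m_r = 1$ and the cross terms $\langle P_r, L_r(\tilde E)\rangle$, $\langle L_r(E), P_r\rangle$ need to be handled — note $\langle P_r, L_r(E)\rangle = \langle P_r, C_rEP_r + P_rEC_r\rangle = 2{\rm tr}(P_rEC_rP_r)=0$ since $C_rP_r = 0$, so these vanish identically, which is a key simplification. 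The surviving terms are $\langle L_r(E),L_r(\tilde E)\rangle$ plus contributions involving $S_r(E)$ or $S_r(\tilde E)$. For the latter, I would decompose $S_r(E) = {\mathbb E}S_r(E) + R_r(E)$, absorb the centered remainder $R_r$ using Theorem \ref{technical_1} (bilinear-form control), and identify the ${\mathbb E}S_r(E)$ piece. At the same time, on the right-hand side $\langle {\mathbb E}\hat P_r,P_r\rangle = 1 + b_r + \langle T_r,P_r\rangle = 1+b_r$ (since $\langle T_r, P_r\rangle = {\rm tr}(P_rT_rP_r)=0$), so $(1+b_r)^2 = 1 + 2b_r + b_r^2$; by the comment after Theorem \ref{bd_bias}, $b_r = -\frac12{\mathbb E}\|\hat P_r - P_r\|_2^2$, and more refined: the key identity from \cite{Koltchinskii_Lounici_HS} should give $2b_r = -{\mathbb E}\|L_r(E)\|_2^2 + (\text{lower order})$. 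This is where $-\frac12\big(\|L_r(E)\|_2^2 - {\mathbb E}\|L_r(E)\|_2^2\big)$ and its tilde analogue come from: matching $\langle\hat P_r,\tilde P_r\rangle$'s diagonal-type terms $\langle L_r(E), {\mathbb E}S_r(\tilde E)\rangle + \langle {\mathbb E}S_r(E), L_r(\tilde E)\rangle + \langle {\mathbb E}S_r(E),{\mathbb E}S_r(\tilde E)\rangle + \langle {\mathbb E}S_r(E), P_r\rangle + \langle P_r, {\mathbb E}S_r(\tilde E)\rangle$ against $1+2b_r+b_r^2$, using that $\langle {\mathbb E}S_r(E), P_r\rangle = b_r - \langle {\mathbb E}L_r(E),P_r\rangle = b_r$ (since ${\mathbb E}L_r(E)=0$). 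Careful bookkeeping should collapse the principal parts into exactly the three displayed terms, with everything else of order $\frac{{\bf r}(\Sigma)}{n}\cdot\sqrt{\frac{t}{n}}$ or smaller.

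The third step is to verify that each discarded piece obeys the bound (\ref{bd_upsilon}). The dominant contributions to $\Upsilon_r$ are: (i) the bilinear-form remainders $\langle R_r(E), \text{stuff}\rangle$, which by Theorem \ref{technical_1} and $\|L_r(\tilde E)\|_\infty \lesssim \|\tilde E\|_\infty/\bar g_r$ combined with (\ref{sha_sha}) give a term of order $\frac{\|\Sigma\|_\infty^3}{\bar g_r^3}\big(\sqrt{{\bf r}(\Sigma)/n}\vee\cdots\big)\sqrt{t/n}\cdot\sqrt{{\bf r}(\Sigma)/n}$ — but since $R_r$ is not rank-one-aligned one needs to expand it over a basis of the range of $P_r$ (which is fine since $m_r=1$); (ii) the Hilbert–Schmidt concentration piece controlling $\|\hat P_r - P_r\|_2^2 - \|L_r(E)\|_2^2$ around its mean via Lemma \ref{HS-conc-bd}, contributing $m_r\frac{\|\Sigma\|_\infty^3}{\bar g_r^3}(\frac{{\bf r}(\Sigma)}{n}\vee\frac tn\vee(\frac tn)^2)\sqrt{t/n}$; (iii) the $T_r$ terms, bounded by (\ref{T_r}), of order $\frac{\|\Sigma\|_\infty^2}{\bar g_r^2}\sqrt{{\bf r}(\Sigma)/n}\cdot n^{-1/2}$, which is smaller; (iv) quadratic-in-$S_r$ cross terms $\langle S_r(E), S_r(\tilde E)\rangle$, bounded in Hilbert–Schmidt norm via $\|S_r(E)\|_2 \leq \sqrt{\operatorname{rank}}\,\|S_r(E)\|_\infty$ and (\ref{remainder_A}) with (\ref{sha_sha}), giving order $(\frac{\|\Sigma\|_\infty^2}{\bar g_r^2})^2(\frac{{\bf r}(\Sigma)}{n}\vee\cdots)^2$. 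Taking the union bound over the $O(1)$ events (using $m_r = \operatorname{rank}(P_r)\leq 1$ so all ranks are bounded, which is implicit in $\gamma \in (\frac{23}{24},1)$ forcing tight control), the worst term is of the claimed form $\frac{\|\Sigma\|_\infty^4}{\bar g_r^4}(\frac{{\bf r}(\Sigma)}{n}\vee\frac tn\vee(\frac tn)^3)(\sqrt{t/n}\vee\frac tn)$. \textbf{The main obstacle} I anticipate is the bookkeeping in step two: isolating exactly which combinations of $L_r$, ${\mathbb E}S_r$, $R_r$, $T_r$, and $b_r$ produce the clean three-term principal part versus the remainder — in particular, showing rigorously that the "mean matching" $2b_r = -{\mathbb E}\|L_r(E)\|_2^2 + (\text{negligible})$ holds and that cross terms like $\langle {\mathbb E}S_r(E),{\mathbb E}S_r(\tilde E)\rangle = \langle {\mathbb E}S_r(E)\rangle^2$-type quantities are $O(({\bf r}/n)^2)$ and do not contaminate the leading order. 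The restriction $\gamma > \frac{23}{24}$ presumably enters to make the small-perturbation regime quantitatively strong enough that all the $(t/n)$-powers line up as stated.
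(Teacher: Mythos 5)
Your algebraic skeleton matches the paper's: expand $(1+\hat b_r)^2=\langle\hat P_r,\tilde P_r\rangle$, split $\hat P_r-\E\hat P_r=L_r(E)+R_r(E)$, and push everything except the three displayed terms into $\Upsilon_r$, with the cross terms $\langle L_r(E),R_r(\tilde E)\rangle$, $\langle R_r(E),R_r(\tilde E)\rangle$, the $T_r$ contributions and the bias--fluctuation couplings handled essentially as you outline. But there is a genuine gap at the single hardest point of the proof, namely the term $\langle \hat P_r-\E\hat P_r,P_r\rangle=\langle R_r(E),P_r\rangle$ (and its tilde analogue). You propose to ``absorb the centered remainder $R_r$ using Theorem \ref{technical_1}.'' For this particular pairing that bound gives $|\langle R_r(E)\theta_r,\theta_r\rangle|\lesssim\frac{\|\Sigma\|_{\infty}^2}{\bar g_r^2}\sqrt{\frac{{\bf r}(\Sigma)}{n}}\sqrt{\frac tn}$ for moderate $t$, i.e.\ a quantity of order $\frac{\sqrt{{\bf r}(\Sigma)t}}{n}$ --- the same order as the retained main terms and as $B_r(\Sigma)/n$ --- whereas the target (\ref{bd_upsilon}) at fixed $t$ is $\asymp\frac{{\bf r}(\Sigma)}{n^{3/2}}=o\bigl(\frac{\sqrt{{\bf r}(\Sigma)}}{n}\bigr)$ when ${\bf r}(\Sigma)=o(n)$. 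This is not just a lossy estimate: $\langle R_r(E),P_r\rangle$ genuinely fluctuates at scale $\sqrt{{\bf r}(\Sigma)}/n$, because its leading part is exactly $-\frac12\bigl(\|L_r(E)\|_2^2-\E\|L_r(E)\|_2^2\bigr)$, one of the three terms the lemma must keep. So this term cannot be dumped into $\Upsilon_r$, and Theorem \ref{technical_1} cannot see the cancellation that makes the rest of it small.

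Relatedly, your ``mean matching'' narrative misidentifies where the terms $-\frac12\bigl(\|L_r(E)\|_2^2-\E\|L_r(E)\|_2^2\bigr)$ come from: matching $2b_r$ against $-\E\|L_r(E)\|_2^2$ can only produce deterministic quantities, whereas these terms are random. They arise from the exact identity $\langle S_r(E),P_r\rangle=-\frac12\|L_r(E)\|_2^2+(\hbox{terms of order }\|E\|_{\infty}^3/\bar g_r^3)$, i.e.\ from the second-order term of the perturbation series for $S_r(E)$. The correct move --- and the paper's --- is to group $\langle\hat P_r-\E\hat P_r,P_r\rangle+\frac12\bigl(\|L_r(E)\|_2^2-\E\|L_r(E)\|_2^2\bigr)$ together, recognize it as the centered version of $\langle S_r(E),P_r\rangle+\frac12\|L_r(E)\|_2^2$, which is cubic in $E$, and prove a dedicated concentration inequality for that quantity (Lemma \ref{le:conc}: expand $S_r(E)$ into the full contour-integral series, truncate on $\{\|E\|_{\infty}\leq\delta\}$, and apply Gaussian isoperimetry --- this is where the hypothesis $\gamma>\frac{23}{24}$ actually enters, not as a generic tightness condition). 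Alternatively, in the rank-one case one has $\langle\hat P_r-\E\hat P_r,P_r\rangle=-\frac12\bigl(\|\hat P_r-P_r\|_2^2-\E\|\hat P_r-P_r\|_2^2\bigr)$, so the grouped quantity is $-\frac12$ times the one controlled by Lemma \ref{HS-conc-bd}; you cite that lemma but never connect it to this term. Without one of these two devices the proof does not close.
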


\begin{proof}
By the definition of $\hat b_r,$ we have
\begin{align}
\label{one_b_r}
\nonumber
(1+ \hat b_r)^2 &   = \bigl \langle \hat P_r, \tilde P_r \bigr \rangle\\
&
\nonumber
= \bigl \langle \hat P_r - \mathbb E \hat P_r, \tilde P_r  - \mathbb E \tilde P_r   \bigr \rangle + \bigl \langle \hat P_r - \mathbb E \hat P_r, P_r \bigr \rangle + \bigl \langle P_r, \tilde P_r  - \mathbb E \tilde P_r  \bigr\rangle  \\
&+ \bigl \langle \hat P_r - \mathbb E \hat P_r, \mathbb E \tilde P_r - P_r \bigr\rangle +\bigl \langle \mathbb E \hat P_r - P_r, \tilde P_r  - \mathbb E \tilde P_r  \bigr\rangle  +  \bigl \langle \mathbb E \hat P_r , \mathbb E \tilde P_r  \bigr\rangle.
\end{align}
In view of (\ref{Ehat}), we also have
\begin{align*}
\bigl \langle \mathbb E \hat P_r , \mathbb E \tilde P_r  \bigr\rangle &= \bigl \langle \mathbb (1+b_r) P_r + T_r , (1+b_r) P_r + T_r\bigr\rangle = (1+b_r)^2 + \|T_r\|_2^2,
\end{align*}
since $P_r$ and $T_r$ are orthogonal by definition of the latter.
Thus, (\ref{one_b_r}) can be rewritten as 
\begin{align}
\label{bias-interm1}
(1+ \hat b_r)^2 - (1+b_r)^2
&= \bigl \langle \hat P_r - \mathbb E \hat P_r, \tilde P_r  - \mathbb E \tilde P_r   \bigr \rangle + 
\bigl \langle \hat P_r - \mathbb E \hat P_r, P_r \bigr \rangle + \bigl \langle P_r, \tilde P_r  - \mathbb E\tilde P_r  \bigr\rangle \notag \\
&+ \bigl \langle \hat P_r - \mathbb E \hat P_r, \mathbb E \tilde P_r - P_r \bigr\rangle +\bigl \langle \mathbb E \hat P_r - P_r, \tilde P_r - \mathbb E \tilde P_r  \bigr\rangle  +   \|T_r\|_2^2.
\end{align} 
Denote
$$
\hat \varrho_r := \bigl \langle \hat P_r - \mathbb E \hat P_r, P_r \bigr \rangle + \frac{1}{2}\bigl( \|L_r(E)\|_2^2 - \mathbb E \|L_r(E)\|_2^2\bigr),
\quad 
\tilde \varrho_r := \bigl \langle \tilde P_r - \mathbb E \tilde P_r, P_r \bigr \rangle+ \frac{1}{2}\bigl( \|L_r(\tilde E)\|_2^2 - \mathbb E \|L_r(\tilde E)\|_2^2  \bigr),
$$
where
$$
E:=\hat \Sigma-\Sigma, \ \ \tilde E:=\tilde \Sigma-\Sigma.
$$
We immediately get from (\ref{bias-interm1}) that
\begin{align*}
(1+ \hat b_r)^2 - (1+b_r)^2
& = \bigl \langle \hat P_r - \mathbb E \hat P_r, \tilde P_r  - \mathbb E \tilde P_r   \bigr \rangle - \frac{1}{2}\bigl( \|L_r(E)\|_2^2 - \mathbb E \|L_r(E)\|_2^2  \bigr) + \varrho_r\\
&-  \frac{1}{2}\bigl( \|L_r(\tilde E)\|_2^2 - \mathbb E \|L_r(\tilde E)\|_2^2  \bigr)+ \tilde\varrho_r+  \bigl \langle \hat P_r - \mathbb E \hat P_r, \mathbb E \tilde P_r - P_r \bigr\rangle\\
& +\bigl \langle \mathbb E \hat P_r - P_r, \tilde P_r  - \mathbb E \tilde P_r  \bigr\rangle  +   \|T_r\|_2^2.
\end{align*}
Since $\hat P_r  - \mathbb E \hat P_r = L_r(E) + R_r(E),$ $\tilde P_r  - \mathbb E \tilde P_r = L_r(\tilde E) + R_r(\tilde E),$
\begin{align*}
\bigl \langle \hat P_r - \mathbb E \hat P_r, \tilde P_r  - \mathbb E \tilde P_r   \bigr \rangle  &= \bigl \langle  L_r(E) + R_r(E), L_r(\tilde E) + R_r(\tilde E) \bigr \rangle\\
&=  \bigl \langle  L_r(E) , L_r(\tilde E)  \bigr \rangle +  \bigl \langle  L_r(E) , R_r(\tilde E) \bigr \rangle +  \bigl \langle  L_r(\tilde E),  R_r(E) \bigr \rangle +  \bigl \langle  R_r(E), R_r(\tilde E) \bigr \rangle.
\end{align*}
Combining the last two displays, we get that representation (\ref{bias-interm2}) holds 
with the remainder  
\begin{align*}
\Upsilon_r :=  \varrho_r + \tilde \varrho_r &+  
\bigl \langle \hat P_r - \mathbb E \hat P_r, \mathbb E \tilde P_r - P_r \bigr\rangle +\bigl \langle \mathbb E \hat P_r - P_r, \tilde P_r  - \mathbb E \tilde P_r  \bigr\rangle  +   \|T_r\|_2^2\\
& + \bigl \langle  L_r(E) , R_r(\tilde E) \bigr \rangle +  \bigl \langle  L_r(\tilde E),  R_r(E) \bigr \rangle +  \bigl \langle  R_r(E), R_r(\tilde E) \bigr \rangle.
\end{align*}
It remains to check that $\Upsilon_r$ satisfies bound (\ref{bd_upsilon}).

In what follows, we frequently use bounds of Theorems \ref{th_operator} and \ref{spectrum_sharper} along with bound (\ref{sha_sha}). Under condition (\ref{deltanleq}), we have
$$
\|\Sigma\|_{\infty} \left( \sqrt{\frac{\mathbf{r}(\Sigma)}{n}}\bigvee  \frac{\mathbf{r}(\Sigma)}{n}  \right) \lesssim \frac{\bar g_r}{2} \leq \frac{\|\Sigma\|_{\infty} }{2}.
$$
This implies that $\frac{\mathbf{r}(\Sigma)}{n} \lesssim 1$ and $\frac{\mathbf{r}(\Sigma)}{n} \lesssim \sqrt{\frac{\mathbf{r}(\Sigma)}{n}}$. Thus, the term $\frac{\mathbf{r}(\Sigma)}{n}$ in bounds of Theorems \ref{th_operator} and \ref{spectrum_sharper} and (\ref{sha_sha}) could be dropped. This is done in what follows without further notice.

Our next goal is to provide a bound on the remainder term $\Upsilon_r$ which can be done 
for an arbitrary multiplicity $m_r$ of $\mu_r.$ To this end, first note that bound (\ref{remaind})
easily implies that for any symmetric operator $B$ of finite rank $m$ the following bound holds 
with probability at least $1-e^{-t}:$
\begin{equation}
\label{R_r_bound}
\Bigl|\langle R_r(E), B\rangle\Bigr| \leq  
D_\gamma m \|B\|_{\infty}\frac{\|\Sigma\|_{\infty}^2}{\bar g_r^2}
\biggl(\sqrt{\frac{{\bf r}(\Sigma)}{n}}\bigvee \sqrt{\frac{t+\log(m)}{n}}\bigvee \frac{t+\log (m)}{n}\biggr)
\sqrt{\frac{t+\log (m)}{n}}.
\end{equation}
Indeed, it is enough to use the spectral representation $B=\sum_{j=1}^m \lambda_j (\phi_j\otimes \phi_j)$
of $B$ with eigenvalues $\lambda_j$ and orthonormal eigenvectors $\phi_j,$ to write 
$$
\Bigl|\langle R_r(E), B\rangle\Bigr| \leq \sum_{j=1}^m |\lambda_j|\Bigl|\langle R_r(E)\phi_j, \phi_j\rangle\Big|
\leq m\|B\|_{\infty} \max_{1\leq j\leq m}\Bigl|\langle R_r(E)\phi_j, \phi_j\rangle\Big|,
$$
to use bound (\ref{remaind}) with $t+\log (m)$ instead of $t$
in order to control bilinear forms $\Bigl|\langle R_r(E)\phi_j, \phi_j\rangle\Big|$
and, finally, to use the union bound. 

We will use bound (\ref{R_r_bound}) to control the last three terms 
in the expression for the remainder $\Upsilon_r.$ To control $\bigl \langle  L_r(\tilde E) , R_r(E) \bigr \rangle,$
we use (\ref{R_r_bound}) conditionally on $\tilde X_1,\dots, \tilde X_n$ with $B= L_r(\tilde E)$ 
(that is of rank at most $2m_r$) to get 
that with probability at least $1-e^{-t}$
\begin{align*}
&
\nonumber
\Bigl|\langle R_r(E), L_r(\tilde E)\rangle\Bigr| \lesssim
\\
&
 m_r \|L_r(\tilde E)\|_{\infty}\frac{\|\Sigma\|_{\infty}^2}{\bar g_r^2}
\biggl(\sqrt{\frac{{\bf r}(\Sigma)}{n}}\bigvee \sqrt{\frac{t+\log(2m_r)}{n}}\bigvee \frac{t+\log(2m_r)}{n}\biggr)
\sqrt{\frac{t+\log (2m_r)}{n}}.
\end{align*}
This should be combined with an upper bound on $\|L_r(\tilde E)\|_{\infty}$ that follows from (\ref{sha_sha})
and also holds with probability at least $1-e^{-t}:$
$$
\|L_r(\tilde E)\|_{\infty}\lesssim \frac{\|\tilde E\|_{\infty}}{\bar g_r}\lesssim 
\frac{\|\Sigma\|_{\infty}}{\bar g_r}
\left(\sqrt{\frac{\mathbf r(\Sigma) }{n}} \bigvee \sqrt{\frac{t}{n}}\bigvee \frac{t}{n} \right)
$$
(where it was also used that $\|C_r\|_{\infty}\leq \frac{1}{\bar g_r}$).
As a consequence, the following holds with probability at least $1-2e^{-t}:$
\begin{align}
\label{R_r_A}
&
\nonumber
\Bigl|\langle R_r(E), L_r(\tilde E)\rangle\Bigr| \lesssim_{\gamma}
\\
&
\nonumber
m_r \frac{\|\Sigma\|_{\infty}^3}{\bar g_r^3}
\left(\sqrt{\frac{\mathbf r(\Sigma) }{n}} \bigvee \sqrt{\frac{t}{n}} \bigvee \frac{t}{n}\right)
\\
&
\biggl(\sqrt{\frac{{\bf r}(\Sigma)}{n}}\bigvee \sqrt{\frac{t+\log(2m_r)}{n}}\bigvee \frac{t+\log(2m_r)}{n}\biggr)
\sqrt{\frac{t+\log (2m_r)}{n}}.
\end{align}
Of course, a similar bound also holds for $\Bigl|\langle L_r(E), R_r(\tilde E)\rangle\Bigr|.$ As to 
$\Bigl|\langle R_r(E), R_r(\tilde E)\rangle\Bigr|,$ observe that, by (\ref{remainder_A}),
(\ref{sha_sha}) and Theorem \ref{th_operator}, we have that with probability at least $1-e^{-t},$
$$
\|R_r(\tilde E)\|_{\infty} \leq \|S_r(\tilde E)\|_{\infty}+ {\mathbb E}\|S_r(\tilde E)\|_{\infty}
\lesssim  \frac{\|\tilde E\|_{\infty}^2}{\bar g_r^2} + \frac{{\mathbb E}\|\tilde E\|_{\infty}^2}{\bar g_r^2}
$$
$$
\lesssim \frac{\|\Sigma\|_{\infty}^2}{\bar g_r^2}\left(\sqrt{\frac{\mathbf r(\Sigma) }{n}} \bigvee \sqrt{\frac{t}{n}} \bigvee \frac{t}{n}\right)^2.
$$
Therefore, using again bound (\ref{remaind}) conditionally on $\tilde X_1,\dots, \tilde X_n$ 
with $B=R_r(\tilde E)$ we get that with probability $1-2e^{-t}$
\begin{align}
\label{R_r_B}
&
\nonumber
\Bigl|\langle R_r(E), R_r(\tilde E)\rangle\Bigr| \lesssim_{\gamma}
\\
&
\nonumber
m_r \frac{\|\Sigma\|_{\infty}^4}{\bar g_r^4}
\left(\sqrt{\frac{\mathbf r(\Sigma) }{n}} \bigvee \sqrt{\frac{t}{n}} \right)^2
\\
&
\biggl(\sqrt{\frac{{\bf r}(\Sigma)}{n}}\bigvee \sqrt{\frac{t+\log(2m_r)}{n}} \bigvee \frac{t+\log(2m_r)}{n}\biggr)
\sqrt{\frac{t+\log (2m_r)}{n}}.
\end{align}

To bound $\|T_r\|_2^2,$ note that
$$
\|T_r\|_2^2=\langle T_r,T_r\rangle \leq \|T_r\|_1\|T_r\|_{\infty},
$$
and, by the definition of $T_r,$ 
$$
\|T_r\|_1\leq \|{\mathbb E}\hat P_r-P_r\|_1+  \|P_r({\mathbb E}\hat P_r-P_r)P_r\|_1
\leq 2m_r {\mathbb E}\|\hat P_r-P_r\|_{\infty}+ m_r\|{\mathbb E}\hat P_r-P_r\|_{\infty}
\leq 3m_r {\mathbb E}\|\hat P_r-P_r\|_{\infty}.
$$
Using (\ref{bd_1}) and Theorem \ref{th_operator}, we get 
$$
\|T_r\|_1\lesssim_{\gamma} m_r \frac{\|\Sigma\|_{\infty}}{\bar g_r}\sqrt{\frac{{\bf r}(\Sigma)}{n}}.
$$
Therefore, by bound (\ref{T_r}),
\begin{equation}
\label{T_r_norm}
\|T_r\|_2^2 \leq \|T_r\|_1\|T_r\|_{\infty}
\lesssim_{\gamma}
m_r^2\frac{\|\Sigma\|_{\infty}^3}{\bar g_r^3}
\frac{{\bf r}(\Sigma)}{n}
\sqrt{\frac{1}{n}}.
\end{equation}

We will now control 
\begin{equation}
\label{thats}
\bigl \langle \hat P_r - \mathbb E \hat P_r, \mathbb E \tilde P_r - P_r \bigr\rangle
= 
\bigl \langle \hat P_r - \mathbb E \hat P_r, P_r W_rP_r\bigr\rangle
+
\bigl \langle \hat P_r - \mathbb E \hat P_r, T_r\bigr\rangle,
\end{equation}
where $W_r=\mathbb E \tilde P_r - P_r.$
Recall that $\hat P_r - \mathbb E \hat P_r=L_r(E)+R_r(E).$ 
Since $L_r(E)=P_rEC_r+C_rEP_r$ and $C_rP_r=P_rC_r=0,$ it is easy to see that  
$$
\langle L_r(E), P_r W_rP_r\rangle= \langle P_rEC_r, P_r W_rP_r\rangle
+\langle C_rEP_r, P_r W_rP_r\rangle=0.
$$ 
Thus, 
$$
\bigl \langle \hat P_r - \mathbb E \hat P_r, P_r W_rP_r\bigr\rangle = \langle R_r(E),P_r W_rP_r\rangle.
$$
Note that $B=P_rW_rP_r$ is an operator of rank at most $m_r$ and, in view of 
(\ref{bd_1}) and Theorem \ref{th_operator}, 
\begin{align*}
&
\|P_r W_rP_r\|_{\infty}\leq \|\mathbb E \tilde P_r - P_r\|_{\infty}
\leq {\mathbb E}\|\tilde P_r - P_r\|_{\infty}
\\
&
\lesssim \frac{\|\Sigma\|_{\infty}}{\bar g_r}\sqrt{\frac{{\bf r}(\Sigma)}{n}}.
\end{align*}
Thus, bound (\ref{R_r_bound})
implies that with probability at least $1-e^{-t}:$
\begin{align}
\label{thats_A}
&
\nonumber
\Bigl|\langle \hat P_r - \mathbb E \hat P_r, P_rW_rP_r\rangle\Bigr|=
\Bigl|\langle R_r(E), P_rW_rP_r\rangle\Bigr| 
\\
&
\lesssim_{\gamma}
m_r \frac{\|\Sigma\|_{\infty}^3}{\bar g_r^3}
\sqrt{\frac{{\bf r}(\Sigma)}{n}}
\biggl(\sqrt{\frac{{\bf r}(\Sigma)}{n}}\bigvee \sqrt{\frac{t+\log(m_r)}{n}}\bigvee \frac{t+\log (m_r)}{n}\biggr)
\sqrt{\frac{t+\log (m_r)}{n}}.
\end{align}
On the other hand, 
$$
\bigl| \langle \hat P_r - \mathbb E \hat P_r, T_r\rangle\bigr|
\leq \|\hat P_r - \mathbb E \hat P_r\|_1 \|T_r\|_{\infty}
\leq \Bigl(\|\hat P_r - P_r\|_1+ {\mathbb E}\|\hat P_r-P_r\|_1\Bigr)
\|T_r\|_{\infty}
$$
$$
\leq 2m_r\Bigl(\|\hat P_r - P_r\|_{\infty}+ {\mathbb E}\|\hat P_r-P_r\|_{\infty}\Bigr)
\|T_r\|_{\infty}.
$$
Using bounds (\ref{T_r}), (\ref{bd_1}), (\ref{sha_sha}) and Theorem \ref{th_operator}, we get 
\begin{align}
\label{thats_B}
\bigl| \langle \hat P_r - \mathbb E \hat P_r, T_r\rangle\bigr|
\lesssim_{\gamma}
m_r^2\frac{\|\Sigma\|_{\infty}^3}{\bar g_r^3}
\left(\sqrt{\frac{\mathbf r(\Sigma)}{n}} \bigvee \sqrt{\frac{t}{n}} \bigvee \frac{t}{n}\right)
\sqrt{\frac{{\bf r}(\Sigma)}{n}}\frac{1}{\sqrt{n}}.
\end{align}
It follows from (\ref{thats}) and bounds (\ref{thats_A}), (\ref{thats_B}) that with probability at least $1-2e^{-t}$
\begin{align}
\label{thats_C}
&
\nonumber
\Bigl|\bigl \langle \hat P_r - \mathbb E \hat P_r, \mathbb E \tilde P_r - P_r \bigr\rangle\Bigr|
\\
&
\lesssim_{\gamma}
m_r^2 \frac{\|\Sigma\|_{\infty}^3}{\bar g_r^3}
\sqrt{\frac{{\bf r}(\Sigma)}{n}}
\biggl(\sqrt{\frac{{\bf r}(\Sigma)}{n}}\bigvee \sqrt{\frac{t+\log(m_r)}{n}}\bigvee 
\frac{t+\log(m_r)}{n}\biggr)
\sqrt{\frac{t+\log (m_r)}{n}}.
\end{align}
Of course, the term $\bigl \langle \tilde P_r - \mathbb E \tilde P_r, \mathbb E \hat P_r - P_r \bigr\rangle$
can be bounded similarly. 

It remains to control $\varrho_r$ and $\tilde \varrho_r$. 
Note that $\langle L_r(E), P_r\rangle=0,$ implying that 
$$
\bigl \langle \hat P_r - \mathbb E \hat P_r, P_r \bigr \rangle =
\bigl \langle L_r(E)+S_r(E)-{\mathbb E} S_r(E), P_r \bigr \rangle
=
\bigl \langle S_r(E)-{\mathbb E} S_r(E), P_r \bigr \rangle.
$$
Therefore,
$$
\varrho_{r}=
\bigl\langle S_r(E), P_r \bigr\rangle   + \frac{1}{2}\|L_r(E)\|_2^2  - \mathbb E\left( \bigl \langle  S_r(E), P_r \bigr\rangle + \frac{1}{2}\|L_r(E)\|_2^2\right).
$$

The following lemma provides a concentration inequality for the random variable $\bigl\langle S_r(E), P_r \bigr\rangle   + \frac{1}{2}\|L_r(E)\|_2^2$ around its expectation (thus, implying a bound on $\varrho_{r}$).

\begin{lemma}
\label{le:conc}
Suppose that condition (\ref{deltanleq}) holds for some $\gamma \in (\frac{23}{24},1)$. 
Then, there exists a constant $L>0$ such that for all $t\geq 1$
the following bound holds with probability at least $1-e^{-t}:$
\begin{align}
\label{conc_b}
&
\nonumber
\biggl| \bigl\langle S_r(E), P_r \bigr\rangle   + \frac{1}{2}\|L_r(E)\|_2^2  - \mathbb E\left( \bigl \langle  S_r(E), P_r \bigr\rangle + \frac{1}{2}\|L_r(E)\|_2^2\right)\biggr|
\\
&
\leq L m_r \frac{\|\Sigma\|_{\infty}^3}{\bar g_r^3}
\biggl(\frac{{\bf r}(\Sigma)}{n}\bigvee \frac{t}{n}\bigvee \biggl(\frac{t}{n}\biggr)^2\biggr)\left( \sqrt{\frac{t}{n}}\bigvee \frac{t}{n}\right).
\end{align}
\end{lemma}

Combining bounds (\ref{R_r_A}), (\ref{R_r_B}), (\ref{T_r_norm}), (\ref{thats_C}) and (\ref{conc_b}),
it is easy to derive the following bound on $\Upsilon_r$ that holds with probability at least $1-12e^{-t}:$
$$
|\Upsilon_r| \lesssim
m_r^2\frac{\|\Sigma\|_{\infty}^4}{\bar g_r^4} \biggl(\frac{{\bf r}(\Sigma)}{n}\bigvee 
\frac{t+\log(2m_r)}{n}\bigvee \biggl(\frac{t+\log(2m_r)}{n}\biggr)^3\biggr)\left(\sqrt{\frac{t+\log(2m_r)}{n}}\bigvee \frac{t+\log(2m_r)}{n}\right).
$$
The probability bound can be written as $1-e^{-t}$ by adjusting the constant in the inequality $\lesssim.$
For $m_r=1,$ this yields bound (\ref{bd_upsilon}) completing the proof of Lemma \ref{lemma2}. 
\qed

\end{proof}

We now prove Lemma \ref{le:conc}. To this end, we will use the following representations for operators $S_r(E).$
Given $L\subset \{1,\dots, k+1\},$ denote $m_L:={\rm card}(L)$ and 
$$
J_L:=\{\vec{j}:=(j_1,\dots,j_k,j_{k+1}): j_s=r, s\in L,  j_s\neq r, s\not\in L\}.
$$ 
Denote by $V_L$ the set of vectors $\nu=(\nu_l:l\in L^c)$ with nonnegative integer 
components such that $\sum_{l\in L^c}\nu_l=m_L-1.$ 
Finally, denote by ${\cal L}_k$ the set of all $L\subset \{1,\dots, k+1\}$ such 
that $L\neq \emptyset, L^c\neq \emptyset.$

\begin{lemma}\label{lem:pert-representations}
For all $r\geq 1,$
\begin{equation}
\label{perturb_ser}
S_r(E)= 
\sum_{k\geq 2}\sum_{L\in \mathcal L_k}
(-1)^{m_L-1}\sum_{\nu\in V_L} A_{\nu}(E),
\end{equation}
where 
$$A_{\nu}(E):=B_1 E\dots B_k E B_{k+1}$$ 
with $B_l=P_r, l\in L$ and 
$B_l=C_r^{\nu_l+1}, l\in L^c.$ 
\end{lemma}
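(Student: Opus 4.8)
\textbf{Proof proposal for Lemma \ref{lem:pert-representations}.}

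The plan is to start from the standard Riesz-projection contour-integral representation of the spectral projector and expand the resolvent of $\hat\Sigma=\Sigma+E$ in a Neumann series around the resolvent of $\Sigma$. Concretely, let $\gamma_r$ be a circle in the complex plane enclosing only the eigenvalue $\mu_r$ of $\Sigma$ (and, under condition (\ref{deltanleq}), also the corresponding cluster of eigenvalues of $\hat\Sigma$); then
$$
\hat P_r - P_r = \frac{1}{2\pi i}\oint_{\gamma_r}\Bigl[(\eta I-\hat\Sigma)^{-1}-(\eta I-\Sigma)^{-1}\Bigr]\,d\eta
= \frac{1}{2\pi i}\oint_{\gamma_r}\sum_{k\geq 1}\bigl(R(\eta)E\bigr)^k R(\eta)\,d\eta,
$$
where $R(\eta):=(\eta I-\Sigma)^{-1}=\sum_{s}\frac{1}{\eta-\mu_s}P_s$ (including the term $s=\infty$ with $\mu_\infty=0$ when $0\in\sigma(\Sigma)$, as in the Remark following Lemma \ref{lem-pert-spectral}). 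The $k=1$ term integrates to the linear part $L_r(E)$, so $S_r(E)$ collects the terms with $k\geq 2$; this matches the outer sum $\sum_{k\geq 2}$ in (\ref{perturb_ser}). What remains is a purely computational residue evaluation of each term $\frac{1}{2\pi i}\oint_{\gamma_r}R(\eta)E R(\eta)E\cdots E R(\eta)\,d\eta$ with $k+1$ resolvent factors.

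The key step is the residue computation. Substituting $R(\eta)=\sum_j \frac{1}{\eta-\mu_j}P_j$ into the product of $k+1$ resolvents expands it into a sum over tuples $\vec j=(j_1,\dots,j_{k+1})$ of the operators $P_{j_1}EP_{j_2}E\cdots EP_{j_{k+1}}$ times the scalar $\prod_{s=1}^{k+1}\frac{1}{\eta-\mu_{j_s}}$. The contour integral picks out the residue at $\eta=\mu_r$, which is nonzero precisely when at least one $j_s=r$; and it is a standard fact that if exactly the indices in a set $L\subset\{1,\dots,k+1\}$ equal $r$ (so $m_L=\mathrm{card}(L)\geq 1$) and the remaining indices $j_l=:s_l\neq r$ for $l\in L^c$, then
$$
\frac{1}{2\pi i}\oint_{\gamma_r}\prod_{s=1}^{k+1}\frac{d\eta}{\eta-\mu_{j_s}}
=(-1)^{m_L-1}\sum_{\nu\in V_L}\prod_{l\in L^c}\frac{1}{(\mu_r-\mu_{s_l})^{\nu_l+1}},
$$
where $V_L$ is exactly the index set of compositions defined in the statement — i.e. the $(m_L-1)$-th derivative expansion of the product $\prod_{l\in L^c}(\eta-\mu_{s_l})^{-1}$ evaluated at $\eta=\mu_r$, written combinatorially. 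Reassembling, the coefficient $\frac{1}{(\mu_r-\mu_{s_l})^{\nu_l+1}}$ attached to the factor $P_{s_l}$ at slot $l$ combines with $E$ to produce exactly $C_r^{\nu_l+1}E$ after summing over all $s_l\neq r$ (recall $C_r=\sum_{s\neq r}\frac{1}{\mu_r-\mu_s}P_s$, so $C_r^{\nu_l+1}=\sum_{s\neq r}\frac{1}{(\mu_r-\mu_s)^{\nu_l+1}}P_s$), while the factor $P_r$ at slots $l\in L$ stays put. This yields $A_\nu(E)=B_1E\cdots B_kEB_{k+1}$ with $B_l=P_r$ for $l\in L$ and $B_l=C_r^{\nu_l+1}$ for $l\in L^c$, and with the sign $(-1)^{m_L-1}$ in front — which is (\ref{perturb_ser}), once one restricts the sum over $L$ to $\mathcal L_k$ (both $L\neq\emptyset$, since some $j_s=r$ is needed for a nonzero residue, and $L^c\neq\emptyset$, since $L=\{1,\dots,k+1\}$ gives the product $P_rEP_rE\cdots EP_r$ whose scalar coefficient $\oint(\eta-\mu_r)^{-(k+1)}d\eta=0$ for $k\geq 1$, hence contributes nothing).

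The main obstacle is purely bookkeeping rather than analytic: one must verify the combinatorial identity for the residue, namely that the $(m_L-1)$-fold partial-fraction expansion of $\prod_{l\in L^c}(\eta-\mu_{s_l})^{-1}$ at $\eta=\mu_r$ produces precisely the sum over $\nu\in V_L$ with the stated powers $\nu_l+1$ and the global sign $(-1)^{m_L-1}$, and then that summing over all $s_l\neq r$ cleanly repackages into powers of $C_r$. One also needs to justify interchanging $\oint$ with the (absolutely convergent, by (\ref{deltanleq}) and $\|E\|_\infty<\bar g_r/2$) Neumann series, and to confirm the $s=\infty$ convention does no harm — which the Remark already grants. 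I expect the argument to be essentially a careful induction on $m_L$ (or a direct Leibniz-rule computation of the $(m_L-1)$-th derivative), with the rest following by linearity and the definition of $C_r$.
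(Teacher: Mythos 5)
Your proposal is correct and follows essentially the same route as the paper: the contour-integral (Riesz projector) representation of $S_r(E)$ via the Neumann expansion of the perturbed resolvent, followed by the residue evaluation at $\eta=\mu_r$ using the Cauchy differentiation formula and the generalized Leibniz rule, which yields exactly the sign $(-1)^{m_L-1}$ and the powers $\nu_l+1$ you state, and finally the repackaging of the sums over $s_l\neq r$ into powers of $C_r$. The residue identity you flag as the "main obstacle" is precisely what the paper verifies by the Leibniz-rule computation, so nothing essential is missing.
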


\begin{proof} 
It follows from the proof of Lemma 1 in \cite{Koltchinskii_Lounici_bilinear} that the following 
representation holds for $S_r(E):$
$$
S_r(E)=
- \sum_{k\geq 2}\frac{1}{2\pi i} \oint_{\gamma_r}(-1)^{k}[R_{\Sigma}(\eta)E]^kR_{\Sigma}(\eta)d\eta, 
$$
where $\gamma_r$ denotes the circle centered at $\mu_r$ of radius $\bar g_r/2$ with counterclockwise 
orientation and
$$
R_{\Sigma}(\eta)=(\Sigma-\eta I)^{-1}=\sum_{j\geq 1}\frac{1}{\mu_j-\eta} P_j
$$ 
denotes the resolvent of $\Sigma.$\footnote{In the case when $0$ is an eigenvalue of $\Sigma,$ the sum in the right hand side of the above formula extends to $j=\infty$ with $\mu_{\infty}=0.$ See also the 
remark after Lemma \ref{lem-pert-spectral}} 
Note also that the series in the above representation 
of $S_r(E)$ converges in the operator norm provided that $\|E\|_{\infty}<\frac{\bar g_r}{4}.$
It follows that
$$
S_r(E)=-\sum_{k\geq 2}\frac{1}{2\pi i} \oint_{\gamma_r}(-1)^{k}
\biggl[\sum_{j\geq 1}\frac{1}{\mu_j-\eta}P_jE\biggr]^k \sum_{j\geq 1}\frac{1}{\mu_j-\eta}P_jd\eta 
$$
$$
=\sum_{k\geq 2}
\sum_{j_1,\dots, j_k,j_{k+1}\geq 1}\frac{1}{2\pi i}
\oint_{\gamma_r}\frac{d\eta}{\prod_{l=1}^{k+1}(\eta-\mu_{j_l})}
P_{j_1}E\dots P_{j_k}EP_{j_{k+1}}.
$$
We have 
$$
\sum_{j_1,\dots, j_k,j_{k+1}\geq 1}\frac{1}{2\pi i}
\oint_{\gamma_r}\frac{d\eta}{\prod_{l=1}^{k+1}(\eta-\mu_{j_l})}
P_{j_1}E\dots P_{j_k}EP_{j_{k+1}}
$$
$$
=\sum_{L\subset \{1,\dots,k+1\}}\sum_{\vec{j}\in J_L}
\frac{1}{2\pi i}
\oint_{\gamma_r}\frac{d\eta}{(\eta-\mu_r)^{m_L}\prod_{l\in L^c}(\eta-\mu_{j_l})}
P_{j_1}E\dots P_{j_k}EP_{j_{k+1}}.
$$
Using Cauchy differentiation formula, we get 
$$
\frac{1}{2\pi i}
\oint_{\gamma_r}\frac{d\eta}{(\eta-\mu_r)^{m_L}\prod_{l\in L^c}(\eta-\mu_{j_l})}
=\frac{1}{(m_L-1)!}\biggl(\prod_{l\in L^c}(\eta-\mu_{j_l})^{-1}\biggr)^{(m_L-1)}_{|\eta=\mu_r}.
$$
In the cases when $L=\emptyset$ or $L^c=\emptyset$ the integral in the left hand side is equal to $0$. 
By generalized Leibniz rule,
$$
\biggl(\prod_{j\in L^c}(\eta-\mu_{j_l})^{-1}\biggr)^{(m_L-1)}_{|\eta=\mu_r}=
\sum_{\nu\in V_L} \frac{(m_L-1)!}{\prod_{l\in L^c}\nu_l!} \prod_{l\in L^c} (-1)^{\nu_l} \nu_l!(\mu_r-\mu_{j_l})^{-\nu_l-1}.
$$
Thus,
$$
\sum_{j_1,\dots, j_k,j_{k+1}\geq 1}\frac{1}{2\pi i}
\oint_{\gamma_r}\frac{d\eta}{\prod_{l=1}^{k+1}(\eta-\mu_{j_l})}
P_{j_1}E\dots P_{j_k}EP_{j_{k+1}}
$$
$$
\sum_{L\subset {\cal L}_k}\sum_{\vec{j}\in J_L}(-1)^{m_L-1}
\sum_{\nu\in V_L} \prod_{l\in L^c}(\mu_r-\mu_{j_l})^{-\nu_l-1}
P_{j_1}E\dots P_{j_k}EP_{j_{k+1}}.
$$
Given $\nu\in V_L,$ recall that  
$A_{\nu}(E)=B_1 E\dots B_k E B_{k+1},$ where $B_l=P_r, l\in L$ and 
$B_l=C_r^{\nu_l+1}, l\in L^c.$ 
It is easy to see that 
$$
\sum_{\vec{j}\in J_L} \prod_{l\in L^c}(\mu_r-\mu_{j_l})^{-\nu_l-1}
P_{j_1}E\dots P_{j_k}EP_{j_{k+1}}=A_{\nu}(E).
$$
Therefore,
$$
\sum_{j_1,\dots, j_k,j_{k+1}\geq 1}\frac{1}{2\pi i}
\oint_{\gamma_r}\frac{d\eta}{\prod_{l=1}^{k+1}(\eta-\mu_{j_l})}
P_{j_1}E\dots P_{j_k}EP_{j_{k+1}}
=
\sum_{L\in {\cal L}_k}
(-1)^{m_L-1}\sum_{\nu\in V_L} A_{\nu}(E)
$$
and 
(\ref{perturb_ser}) follows. 
\qed
\end{proof}

\begin{remark}
By a simple combinatorics, 
\begin{equation}
\label{card_L}
{\rm card}\biggl(\bigcup_{L\subset \{1,\dots,k+1\}} V_L\biggr)\leq \sum_{m=0}^{k+1} {k+1\choose m}^2
={2(k+1)\choose k+1}\leq 2^{2(k+1)}.
\end{equation}
\end{remark}

It is easy to check that 
$$
\sum_{L\in \mathcal L_2}(-1)^{m_L-1}\sum_{\nu\in V_L} A_{\nu}(E)
$$
$$
=P_rEC_rEC_r + C_rEP_rEC_r+C_rEC_rEP_r - P_rEP_rEC_r^2
-P_r EC_r^2 EP_r - C_r^2EP_rEP_r.
$$
Using the fact that $C_rP_r =P_rC_r=0,$ this easily implies that 
$$
\sum_{L\in \mathcal L_2}(-1)^{m_L-1}\sum_{\nu\in V_L} \langle  A_{\nu}(E) , P_r\rangle 
= -{\rm tr}(P_rEC_r^2EP_r) = -\|P_rEC_r\|_2^2=
-\frac{1}{2}\|L_r(E)\|_2^2.
$$
Thus, we get
$$
\langle S_r(E), P_r \rangle +\frac{1}{2}\|L_r(E)\|_2^2 = \sum_{k\geq 3}\sum_{L\in \mathcal L_k}(-1)^{m_L-1}\sum_{\nu\in V_L} 
\langle A_{\nu}(E), P_r\rangle.
$$


The next step is to study the concentration of the random variable
$\langle S_r(E), P_r \rangle +\frac{1}{2}\|L_r(E)\|_2^2$ around its expectation. More precisely,
we study the concentration of its ``truncated version"
\begin{align*}
\left(\langle S_r(E), P_r \rangle +\frac{1}{2}\|L_r(E)\|_2^2 \right) \varphi\Bigl(\frac{\|E\|_{\infty}}{\delta}\Bigr),
\end{align*}
where $\varphi$ is a Lipschitz function with constant $1$ on ${\mathbb R}_{+},$ $0\leq \varphi (s)\leq 1,$
$\varphi (s)=1, s\leq 1,$ $\varphi(s)=0, s>2.$
The value of $\delta>0$ will be chosen below in such a way that $\|E\|_{\infty}\leq \delta$
with a high probability.

The main ingredient of the proof is the classical Gaussian isoperimetric inequality that 
easily implies the following statement.

\begin{lemma}
\label{Gaussian_concentration}
Let $X_1,\dots, X_n$ be i.i.d. centered Gaussian random variables 
in ${\mathbb H}$ with covariance operator $\Sigma.$
Let $f:{\mathbb H}^n\mapsto {\mathbb R}$ be a function satisfying 
the following Lipschitz condition with some $L>0:$
$$
\Bigl|f(x_1,\dots, x_n)-f(x_1',\dots, x_n')\Bigr|\leq 
L\biggl(\sum_{j=1}^n \|x_j-x_j'\|^2\biggr)^{1/2},\ x_1,\dots, x_n,x_1',\dots,x_n'\in {\mathbb H}.
$$ 
Suppose that, for a real number $M$,
$$
\mathbb{P} \left\lbrace f(X_1,\ldots,X_n) \geq M  \right\rbrace \geq \frac{1}{4}\;\text{and}\; \mathbb{P} \left\lbrace f(X_1,\ldots,X_n) \leq M  \right\rbrace \geq \frac{1}{4}
$$
Then, there exists a numerical constant $D>0$ such that for all $t\geq 1$ 
$$
{\mathbb P}\Bigl\{|f(X_1,\dots, X_n)-M|\geq DL\|\Sigma\|_{\infty}^{1/2}\sqrt{t}\Bigr\}\leq e^{-t}.
$$
\end{lemma}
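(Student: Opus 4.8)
The plan is to reduce the statement to the classical Gaussian concentration inequality for Lipschitz functions applied to a standard Gaussian sequence, and then to relate the arbitrary level $M$ to a median of $f(X_1,\dots,X_n)$ using the two quantile hypotheses.

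First I would realize the sample as a linear image of an i.i.d.\ standard normal sequence. Since ${\mathbb H}$ is separable and $\Sigma$ is trace class, the Karhunen--Lo\`eve expansion gives $X_j\stackrel{d}{=}\sum_k \sqrt{\lambda_k}\,\xi_{jk}e_k$, where $\{e_k\}$ is an orthonormal eigenbasis of $\Sigma$, $\lambda_k$ the corresponding eigenvalues, and $(\xi_{jk})_{j,k}$ are i.i.d.\ standard normal. Writing ${\boldsymbol\xi}=(\xi_{jk})$ and $F({\boldsymbol\xi}):=f(X_1({\boldsymbol\xi}),\dots,X_n({\boldsymbol\xi}))$, one checks that for two configurations $\|x_j-x_j'\|^2=\sum_k\lambda_k(\xi_{jk}-\xi_{jk}')^2\leq \|\Sigma\|_\infty\|{\boldsymbol\xi}_j-{\boldsymbol\xi}_j'\|^2$, so the Lipschitz hypothesis on $f$ transfers to $F$ with constant $\tilde L:=L\|\Sigma\|_\infty^{1/2}$. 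In infinite dimensions this is made rigorous by truncating the expansion at level $N$, applying the finite-dimensional argument with the Lipschitz constant $\tilde L$ uniform in $N$, and letting $N\to\infty$.

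Next I would invoke the Gaussian isoperimetric inequality in its standard median form: if $m$ is a median of $F$, then for all $u\geq 0$,
$$
{\mathbb P}\{F\geq m+u\}\leq \tfrac12 e^{-u^2/(2\tilde L^2)},\qquad {\mathbb P}\{F\leq m-u\}\leq \tfrac12 e^{-u^2/(2\tilde L^2)},
$$
hence ${\mathbb P}\{|F-m|\geq u\}\leq e^{-u^2/(2\tilde L^2)}$. I would then use the hypotheses to bound $|M-m|$: if $M>m+\sqrt{2\log 2}\,\tilde L$, the first inequality forces ${\mathbb P}\{F\geq M\}<\tfrac14$, contradicting ${\mathbb P}\{f(X_1,\dots,X_n)\geq M\}\geq\tfrac14$; symmetrically $M\geq m-\sqrt{2\log 2}\,\tilde L$. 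Therefore $|M-m|\leq c_0\tilde L$ with $c_0$ an absolute constant (slightly larger than $\sqrt{2\log 2}$ if one wants strict inequalities).

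Finally, by the triangle inequality, for $u\geq c_0\tilde L$,
$$
{\mathbb P}\{|F-M|\geq u\}\leq {\mathbb P}\{|F-m|\geq u-c_0\tilde L\}\leq e^{-(u-c_0\tilde L)^2/(2\tilde L^2)}.
$$
Taking $u=D\tilde L\sqrt{t}$ and using $t\geq 1$, a sufficiently large numerical constant $D$ makes $(D\sqrt t-c_0)^2/2\geq t$ for every $t\geq 1$, which yields ${\mathbb P}\{|f(X_1,\dots,X_n)-M|\geq DL\|\Sigma\|_\infty^{1/2}\sqrt{t}\}\leq e^{-t}$ after recalling $F\stackrel{d}{=}f(X_1,\dots,X_n)$. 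The only step needing any care is the infinite-dimensional reduction to a standard Gaussian together with the identification of the Lipschitz constant of $F$; the rest is the classical median-based Gaussian concentration argument plus elementary manipulation of the quantile conditions, so I do not expect a genuine obstacle.
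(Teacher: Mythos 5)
Your proof is correct and is exactly the argument the paper has in mind: the paper does not even write out a proof, simply attributing the lemma to the classical Gaussian isoperimetric inequality, and your write-up supplies the standard details (Karhunen--Lo\`eve reduction giving the Lipschitz constant $L\|\Sigma\|_{\infty}^{1/2}$ with respect to the underlying standard Gaussian, median concentration, and the two quantile hypotheses to replace the median by $M$ at the cost of an absolute constant). No gaps.
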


We will use Lemma \ref{Gaussian_concentration} that will be applied to the function 
\begin{align*}
f(X_1,\dots, X_n)&:=
\left(\langle S_r(E), P_r \rangle +\frac{1}{2}\|L_r(E)\|_2^2 \right) \varphi\Bigl(\frac{\|E\|_{\infty}}{\delta}\Bigr)\\
&= \sum_{k\geq 3}\sum_{L\in {\cal L}_k}
(-1)^{m_L-1}\sum_{\nu\in V_L} f_{\nu,L}(X_1,\dots, X_n),
\end{align*}
where 
$$
f_{\nu,L}(X_1,\dots, X_n):=\langle A_{\nu}(E),P_r\rangle \varphi\Bigl(\frac{\|E\|_{\infty}}{\delta}\Bigr),
$$
$$
E=\hat \Sigma-\Sigma, \ \ 
\hat \Sigma=n^{-1}\sum_{j=1}^n X_j\otimes X_j.
$$
With a little abuse of notation, assume for now that $X_1,\dots, X_n$
are nonrandom vectors in ${\mathbb H}.$
We now have to check the Lipschitz condition for the function $f.$

\begin{lemma}
\label{Lipschitz_constant}
Let $\delta>0$ and suppose that $\|C_r\|_{\infty}\delta\leq 1/24.$
Then, there exists a numerical constant $D>0$ such that, for all $X_1,\dots, X_n,
X_1',\dots, X_n'\in {\mathbb H},$
\begin{equation}
\label{lip_lip_CC}
|f(X_1,\dots, X_n)-f(X_1',\dots, X_n')| 
\leq 
D m_r \|C_r\|_{\infty}^3 \delta^2
\frac{\|\Sigma\|_{\infty}^{1/2}+\sqrt{\delta}}{\sqrt{n}}
\biggl(\sum_{j=1}^n \|X_j-X_j'\|^2\biggr)^{1/2}.
\end{equation}
\end{lemma}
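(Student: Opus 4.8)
The plan is to bound the Lipschitz constant of $f$ through its gradient, the essential point being that differentiating $f$ in $X_i$ produces a vector of the form $n^{-1}QX_i$ with a single self-adjoint operator $Q=Q(X_1,\dots,X_n)$ (the same for every $i$) of \emph{rank} $\lesssim m_r$; extracting the Hilbert--Schmidt norm of $Q$ rather than its operator norm is exactly what supplies the factor $n^{-1/2}$.

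First I would reduce to an a.e.\ gradient bound. Since $\varphi$ and $\varphi'$ vanish outside $\{\|E\|_\infty\le 2\delta\}$ and $\delta\|C_r\|_\infty\le 1/24$ is small enough for the perturbation expansion of $S_r$ to converge there, on that set one has the representation (established in the text)
$$g:=\bigl\langle S_r(E),P_r\bigr\rangle+\tfrac12\|L_r(E)\|_2^2=\sum_{k\ge 3}\sum_{L\in\mathcal L_k}(-1)^{m_L-1}\sum_{\nu\in V_L}{\rm tr}\bigl(A_\nu(E)P_r\bigr),$$
and extending $f$ by $0$ where $\|E\|_\infty>2\delta$ makes $f$ locally Lipschitz on ${\mathbb H}^n$ with $\nabla f\equiv 0$ on $\{\|E\|_\infty>2\delta\}$. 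As a locally Lipschitz function whose gradient is a.e.\ bounded by $\Lambda$ is globally $\Lambda$-Lipschitz, it suffices to show $\sum_{i=1}^n\|\nabla_{X_i}f\|^2\le\Lambda^2$ a.e.\ on $\{\|E\|_\infty\le 2\delta\}$, with $\Lambda$ the right-hand side of (\ref{lip_lip_CC}); note that on this set $\|\hat\Sigma\|_\infty\le\|\Sigma\|_\infty+2\delta$.

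Next I would compute $\nabla_{X_i}f=\varphi\,\nabla_{X_i}g+g\,\nabla_{X_i}\bigl(\varphi(\|E\|_\infty/\delta)\bigr)$. Differentiating the series for $g$ term by term (legitimate by its uniform convergence on the support), and using $\partial_{X_i}\bigl(n^{-1}X_i\otimes X_i\bigr)[h]=n^{-1}(X_i\otimes h+h\otimes X_i)$ together with cyclicity of the trace, one gets for each term
$$\nabla_{X_i}{\rm tr}\bigl(A_\nu(E)P_r\bigr)=\frac1n\sum_{s=1}^k\bigl(N_sM_s+(N_sM_s)^*\bigr)X_i,\quad M_s:=B_1E\cdots B_s,\ N_s:=B_{s+1}E\cdots B_{k+1}P_r.$$
As an operator word, $N_sM_s=B_{s+1}E\cdots B_{k+1}\,P_r\,B_1E\cdots B_s$ contains the factor $P_r$, hence has rank $\le m_r$; it contains exactly $k-1$ factors $E$, and the $C_r$-powers among its $B_l$'s have total exponent $\sum_{l\in L^c}(\nu_l+1)=(k+1-m_L)+(m_L-1)=k$, so $\|N_sM_s\|_\infty\le(2\delta)^{k-1}\|C_r\|_\infty^k$ on the support, whence $\|N_sM_s+(N_sM_s)^*\|_2\lesssim\sqrt{m_r}\,(2\delta)^{k-1}\|C_r\|_\infty^k$. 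Summing over $s$ (at most $k$ terms), over $(L,\nu)$ (at most $2^{2(k+1)}$ pairs, by (\ref{card_L})) and over $k\ge 3$ gives a geometric series in $8\delta\|C_r\|_\infty\le 1/3$, so the resulting operator has Hilbert--Schmidt norm dominated by its $k=3$ term, i.e.\ $\lesssim\sqrt{m_r}\,\|C_r\|_\infty^3\delta^2$. For the other term, $g\,\nabla_{X_i}\bigl(\varphi(\|E\|_\infty/\delta)\bigr)=\delta^{-1}g\,\varphi'(\|E\|_\infty/\delta)\,\nabla_{X_i}\|E\|_\infty=\pm\,2n^{-1}\delta^{-1}g\,\varphi'(\|E\|_\infty/\delta)\,(w\otimes w)X_i$, where $w$ is a unit top eigenvector of $\pm E$; this is $n^{-1}$ times a rank-one self-adjoint operator of Hilbert--Schmidt norm $\le 2\delta^{-1}|g|$, and the same counting (using ${\rm rank}(A_\nu(E)P_r)\le m_r$) gives $|g|\le m_r\sum_{k\ge 3}2^{2(k+1)}(2\delta)^k\|C_r\|_\infty^k\lesssim m_r\|C_r\|_\infty^3\delta^3$. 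Collecting the two contributions, $\nabla_{X_i}f=n^{-1}QX_i$ with $Q$ self-adjoint, independent of $i$, and $\|Q\|_2\lesssim m_r\|C_r\|_\infty^3\delta^2$.

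Finally, since $Q$ is self-adjoint and independent of $i$,
$$\sum_{i=1}^n\|\nabla_{X_i}f\|^2=\frac1{n^2}\sum_{i=1}^n\|QX_i\|^2=\frac1{n^2}{\rm tr}\Bigl(Q^2\sum_{i=1}^nX_i\otimes X_i\Bigr)=\frac1n{\rm tr}(Q^2\hat\Sigma)\le\frac1n\|Q\|_2^2\,\|\hat\Sigma\|_\infty,$$
by ${\rm tr}(AB)\le\|B\|_\infty{\rm tr}(A)$ for positive $A=Q^2,\ B=\hat\Sigma$. Since $\|\hat\Sigma\|_\infty\le\|\Sigma\|_\infty+2\delta$ on the support, this is $\lesssim n^{-1}\bigl(m_r\|C_r\|_\infty^3\delta^2\bigr)^2(\|\Sigma\|_\infty+2\delta)$, and taking square roots (with $\sqrt{\|\Sigma\|_\infty+2\delta}\le\|\Sigma\|_\infty^{1/2}+\sqrt{2\delta}$) yields (\ref{lip_lip_CC}). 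I expect the main obstacle to be the multi-index bookkeeping in the third step --- checking that every $N_sM_s$ really carries the factor $P_r$ (so that its rank is $\le m_r$) and that its $E$-degree and $C_r$-exponent equal $k-1$ and $k$, and then summing the resulting series with the right geometric ratio --- and, more routinely, the upgrade from an a.e.\ gradient estimate to a genuine Lipschitz bound even though $E\mapsto\|E\|_\infty$ is only a.e.\ differentiable. The conceptual crux, however, is to use $\|Q\|_2$ rather than $\|Q\|_\infty$: the naive estimate $\sum_i\|\nabla_{X_i}f\|^2\le n^{-2}\|Q\|_\infty^2\sum_i\|X_i\|^2=n^{-1}\|Q\|_\infty^2\,{\rm tr}(\hat\Sigma)$ fails because ${\rm tr}(\hat\Sigma)$ is not controlled on the support, and it is precisely the rank-$m_r$ structure of $Q$ that repairs this and produces the $n^{-1/2}$.
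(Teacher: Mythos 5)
Your argument is correct, but it takes a genuinely different route from the paper's. The paper works with finite differences: for each $f_{\nu,L}$ it telescopes the word $B_1E\cdots B_kEB_{k+1}$ in $E-E'$ one factor at a time, runs a case analysis on which of $\|E\|_{\infty},\|E'\|_{\infty}$ exceeds $2\delta$ to handle the cutoff, and then imports the separately proven bound (\ref{E__E'}) on $\|E-E'\|_{\infty}$ in terms of $\bigl(\sum_j\|X_j-X_j'\|^2\bigr)^{1/2}$ --- that bound is the sole source of the factor $n^{-1/2}\bigl(\|\Sigma\|_{\infty}^{1/2}+\sqrt{2\delta}\bigr)$, and its quadratic second branch forces the extra $\min(\cdot,\delta)$ manoeuvre in (\ref{lip_lip'''}). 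You instead differentiate: $\nabla_{X_i}f=n^{-1}QX_i$ with a single self-adjoint $Q$ of rank $O(m_r)$ common to all $i$, and you recover the $n^{-1/2}\|\hat\Sigma\|_{\infty}^{1/2}$ factor from $\sum_i\|QX_i\|^2=n\,{\rm tr}(Q^2\hat\Sigma)\le n\|Q\|_2^2\|\hat\Sigma\|_{\infty}$ together with $\|\hat\Sigma\|_{\infty}\le\|\Sigma\|_{\infty}+2\delta$ on the support. In effect you re-derive the content of (\ref{E__E'}) infinitesimally; this eliminates both the case analysis and the quadratic branch, and it makes explicit that the rank-$m_r$ structure (bounding $\|Q\|_2$ by $\sqrt{{\rm rank}}\cdot\|Q\|_{\infty}$) is what produces the $n^{-1/2}$ --- a point the paper leaves buried inside the proof of (\ref{E__E'}). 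Your bookkeeping is right: each $N_sM_s$ contains a factor $P_r$ (hence rank $\le m_r$), carries $k-1$ copies of $E$ and total $C_r$-exponent $\sum_{l\in L^c}(\nu_l+1)=k$, the $(L,\nu)$-count is (\ref{card_L}), and the ratio $8\delta\|C_r\|_{\infty}\le 1/3$ makes all series geometric with the $k=3$ term dominant. The one step that needs genuine care is the one you flag as routine: in ${\mathbb H}^n$ there is no "a.e." in the Lebesgue sense and $\|E\|_{\infty}$ is nondifferentiable where the top eigenvalue of $\pm E$ is degenerate. The standard repair is one-dimensional: restrict $f$ to the segment $t\mapsto\bigl(X_j+t(X_j'-X_j)\bigr)_j$, note that this restriction is Lipschitz hence absolutely continuous, and that at Lebesgue-a.e.\ $t$ where it is differentiable the derivative is given by your formula with $w$ a unit top eigenvector of $\pm E(t)$ (attained since $E(t)$ is compact; Danskin-type identity); then integrate over $[0,1]$. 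With that fix spelled out, the proof is complete and yields the stated constant structure.
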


\begin{proof}
Consider first each function $f_{\nu,L}$ separately.
Let $L\in {\cal L}_k$ for some $k\geq 3.$ 
Note that 
$$
f_{\nu,L}(X_1,\dots, X_n)=\langle B_1 E\dots B_k E B_{k+1},P_r\rangle \varphi\Bigl(\frac{\|E\|_{\infty}}{\delta}\Bigr),
$$ 
where $B_l=P_r, l\in L$ and $B_l=C_r^{\nu_l+1}, l\in L^c.$  
Therefore, we get 
\begin{align}
\label{fnuL}
&
|f_{\nu,L}(X_1,\dots, X_n)| \leq 
\|B_1\|_{\infty}\dots\|B_{k+1}\|_{\infty}\|E\|_{\infty}^{k} \|P_r\|_1 I(\|E\|_{\infty}\leq 2\delta)
\\
\nonumber
&
\leq \|B_1\|_{\infty}\dots\|B_{k+1}\|_{\infty} \|P_r\|_1(2\delta)^k.
\end{align}
For $X_1',\dots, X_n'\in {\mathbb H},$ denote 
$$
\hat \Sigma':=n^{-1}\sum_{j=1}^n X_j'\otimes X_j',\ \ 
E':=\hat \Sigma'-\Sigma.
$$
Then, we get
\begin{align}
&
\nonumber
|f_{\nu,L}(X_1,\dots, X_n)-f_{\nu,L}(X_1',\dots, X_n')|
\\
\nonumber
&
=\biggl|\langle B_1(E-E')B_2\dots EB_kEB_{k+1},P_r\rangle \varphi \biggl(\frac{\|E\|_{\infty}}{\delta}\biggr) 
\\
\nonumber
&
+\langle B_1E'B_2 (E-E')B_3\dots EB_kEB_{k+1},P_r\rangle \varphi \biggl(\frac{\|E\|_{\infty}}{\delta}\biggr)+ \dots
\\
\nonumber
&
+\langle B_1E'B_2\dots E'B_k(E-E')B_{k+1},P_r\rangle \varphi \biggl(\frac{\|E\|_{\infty}}{\delta}\biggr)
\\
\nonumber
&
+\langle B_1E'B_2\dots E'B_kE'B_{k+1},P_r\rangle 
\biggl(\varphi \biggl(\frac{\|E\|_{\infty}}{\delta}\biggr)-\varphi \biggl(\frac{\|E'\|_{\infty}}{\delta}\biggr)\biggr)\biggr|
\\
\nonumber
&
\leq k\|B_1\|_{\infty}\dots \|B_{k+1}\|_{\infty}\|P_r\|_1(\|E\|_{\infty}\vee \|E'\|_{\infty})^{k-1}
\|E-E'\|_{\infty}
\\
\nonumber
&
+
\|B_1\|_{\infty}\dots \|B_{k+1}\|_{\infty}\|P_r\|_1\|E'\|_{\infty}^{k}\frac{1}{\delta}\|E-E'\|_{\infty},
\end{align}
where we used the assumption that the Lipschitz constant of $\varphi$
is $1.$ By symmetry, $\|E'\|_{\infty}$ in the right hand side can be replaced by 
$\|E\|_{\infty}$ implying that
\begin{align}
\label{odin_odin}
&
|f_{\nu,L}(X_1,\dots, X_n)-f_{\nu,L}(X_1',\dots, X_n')|
\\
\nonumber
&
\leq k\|B_1\|_{\infty}\dots \|B_{k+1}\|_{\infty}\|P_r\|_1(\|E\|_{\infty}\vee \|E'\|_{\infty})^{k-1}
\|E-E'\|_{\infty}
\\
\nonumber
&
+
\|B_1\|_{\infty}\dots \|B_{k+1}\|_{\infty}\|P_r\|_1(\|E\|_{\infty}\wedge \|E'\|_{\infty})^{k}\frac{1}{\delta}\|E-E'\|_{\infty}.
\end{align}

If both $\|E\|_{\infty}\leq 2\delta$ and 
$\|E'\|_{\infty}\leq 2\delta,$ this implies the bound 
\begin{align}
\label{lip_lip}
&
|f_{\nu,L}(X_1,\dots, X_n)-f_{\nu,L}(X_1',\dots, X_n')|\leq 
\\
\nonumber
&
\|B_1\|_{\infty}\dots \|B_{k+1}\|_{\infty}\|P_r\|_1 (k+2)(2\delta)^{k-1} \|E-E'\|_{\infty}.
\end{align}
If $\|E\|_{\infty}\leq 2\delta,$ but $\|E'\|_{\infty}>2\delta,$ then 
$f_{\nu,L}(X_1',\dots,X_n')=0$ and, by (\ref{fnuL}), 
$$
|f_{\nu,L}(X_1,\dots, X_n)-f_{\nu,L}(X_1',\dots, X_n')|=|f_{\nu,L}(X_1,\dots, X_n)|
$$
$$
\leq \|B_1\|_{\infty}\dots \|B_{k+1}\|_{\infty} \|P_r\|_1 (2\delta)^{k}.
$$
If, in addition, $\|E-E'\|_{\infty}>\delta,$ then bound (\ref{lip_lip})
still holds. On the other hand, if $\|E-E'\|_{\infty}\leq \delta,$ then
$\|E'\|_{\infty}\leq 3\delta$ and we get a slightly worse bound than (\ref{lip_lip}):
\begin{align}
\label{lip_lip'}
&
|f_{\nu,L}(X_1,\dots, X_n)-f_{\nu,L}(X_1',\dots, X_n')|\leq 
\\
\nonumber
&
\|B_1\|_{\infty}\dots \|B_{k+1}\|_{\infty} \|P_r\|_1 (k+2)(3\delta)^{k-1} \|E-E'\|_{\infty}.
\end{align}
The case when $\|E\|_{\infty}> 2\delta$ and $\|E'\|_{\infty}\leq 2\delta$
can be handled similarly and the case when both $\|E\|_{\infty}> 2\delta$ and $\|E'\|_{\infty}> 2\delta$ is trivial since function $f_{\nu,L}$ becomes $0.$ In each of these cases, bound (\ref{lip_lip'})
holds.

The following bound (see Lemma 5 in \cite{Koltchinskii_Lounici_bilinear}) provides a control of $\|E - E'\|_{\infty}:$
\begin{equation}
\label{E__E'}
\|E-E'\|_{\infty}\leq 
\frac{4\|\Sigma\|_{\infty}^{1/2}+4\sqrt{2\delta}}{\sqrt{n}}
\biggl(\sum_{j=1}^n \|X_j-X_j'\|^2\biggr)^{1/2}
\bigvee
\frac{4}{n}
\sum_{j=1}^n \|X_j-X_j'\|^2.
\end{equation}
Substituting the last bound into (\ref{lip_lip'}), we get 
\begin{align}
\label{lip_lip''}
&
|f_{\nu,L}(X_1,\dots, X_n)-f_{\nu,L}(X_1',\dots, X_n')|\leq 
\\
\nonumber
&
\biggl(4\|B_1\|_{\infty}\dots \|B_{k+1}\|_{\infty}\|P_r\|_1 (k+2)(3\delta)^{k-1} 
\frac{\|\Sigma\|_{\infty}^{1/2}+\sqrt{2\delta}}{\sqrt{n}}
\biggl(\sum_{j=1}^n \|X_j-X_j'\|^2\biggr)^{1/2}\biggr)
\bigvee
\\
\nonumber
&
\biggl(4\|B_1\|_{\infty}\dots \|B_{k+1}\|_{\infty} \|P_r\|_1 (k+2)(3\delta)^{k-1} \frac{1}{n}
\sum_{j=1}^n \|X_j-X_j'\|^2\biggr).
\end{align}
In view of (\ref{fnuL}), the left hand side is also bounded 
from above by 
$$
2\|B_1\|_{\infty}\dots \|B_{k+1}\|_{\infty}\|P_r\|_1 (2\delta)^{k},
$$
which allows one to get from (\ref{lip_lip''}) that 
\begin{align}
\label{lip_lip'''}
&
|f_{\nu,L}(X_1,\dots, X_n)-f_{\nu,L}(X_1',\dots, X_n')|\leq 
\\
\nonumber
&
\biggl(4\|B_1\|_{\infty}\dots \|B_{k+1}\|_{\infty}\|P_r\|_1 (k+2)(3\delta)^{k-1} 
\frac{\|\Sigma\|_{\infty}^{1/2}+\sqrt{2\delta}}{\sqrt{n}}
\biggl(\sum_{j=1}^n \|X_j-X_j'\|^2\biggr)^{1/2}\biggr)
\bigvee
\\
\nonumber
&
\biggl(4\|B_1\|_{\infty}\dots \|B_{k+1}\|_{\infty} \|P_r\|_1 (3\delta)^{k-1} \biggl((k+2)\frac{1}{n}
\sum_{j=1}^n \|X_j-X_j'\|^2\bigwedge \delta\biggr)\biggr).
\end{align}
In the case when 
$$
\biggl(\sum_{j=1}^n \|X_j-X_j'\|^2\biggr)^{1/2} \leq 
\sqrt{\frac{\delta n}{k+2}},
$$
we have 
$$
4\|B_1\|_{\infty}\dots \|B_{k+1}\|_{\infty} \|P_r\|_1 (3\delta)^{k-1} \biggl((k+2)\frac{1}{n}
\sum_{j=1}^n \|X_j-X_j'\|^2\bigwedge \delta\biggr)
$$
$$
\leq 4\|B_1\|_{\infty}\dots \|B_{k+1}\|_{\infty}\|P_r\|_1 (3\delta)^{k-1/2}
\sqrt{k+2}\frac{1}{\sqrt{n}}\biggl(\sum_{j=1}^n \|X_j-X_j'\|^2\biggr)^{1/2}.
$$
It is equally easy to check that the same bound holds in the opposite case.
As a consequence, (\ref{lip_lip'''}) implies that 
\begin{align}
\label{lip_lip_AA}
&
|f_{\nu,L}(X_1,\dots, X_n)-f_{\nu,L}(X_1',\dots, X_n')|\leq 
\\
\nonumber
&
4\|B_1\|_{\infty}\dots \|B_{k+1}\|_{\infty} \|P_r\|_1 (k+2)(3\delta)^{k-1} 
\frac{\|\Sigma\|_{\infty}^{1/2}+\sqrt{2\delta}}{\sqrt{n}}
\biggl(\sum_{j=1}^n \|X_j-X_j'\|^2\biggr)^{1/2}.
\end{align}

Note that 
\begin{equation}
\label{B_C}
\|B_1\|_{\infty}\dots \|B_{k+1}\|_{\infty}=\prod_{l\in L^c}\|C_r^{\nu_l+1}\|_{\infty}
\leq \|C_r\|_{\infty}^{\sum_{l\in L^c}(\nu_l+1)}
=\|C_r\|_{\infty}^{k},
\end{equation}
where we used the facts that 
$$
\sum_{l\in L^c}\nu_l=m_L-1,\ \ {\rm card}(L^c)=k+1-m_L.
$$
Thus, we get from (\ref{lip_lip_AA}) 
\begin{align}
\nonumber
&
|f_{\nu,L}(X_1,\dots, X_n)-f_{\nu,L}(X_1',\dots, X_n')|\leq 
\\
\nonumber
&
4\|C_r\|_{\infty}^k \|P_r\|_1 (k+2)(3\delta)^{k-1} 
\frac{\|\Sigma\|_{\infty}^{1/2}+\sqrt{2\delta}}{\sqrt{n}}
\biggl(\sum_{j=1}^n \|X_j-X_j'\|^2\biggr)^{1/2},
\end{align}
which, taking also into account (\ref{card_L}), yields 
\begin{align}
\label{lip_lip_BB}
&
|f(X_1,\dots, X_n)-f(X_1',\dots, X_n')| 
\\
\nonumber
&
\leq 
4\sum_{k\geq 3}\sum_{L\in {\cal L}_k}\sum_{\nu\in V_L}
\|C_r\|_{\infty}^k \|P_r\|_1(k+2)(3\delta)^{k-1} 
\frac{\|\Sigma\|_{\infty}^{1/2}+\sqrt{2\delta}}{\sqrt{n}}
\biggl(\sum_{j=1}^n \|X_j-X_j'\|^2\biggr)^{1/2}
\\
\nonumber
&
\leq 
4\sum_{k\geq 3}2^{2(k+1)}\|C_r\|_{\infty}^k \|P_r\|_1 (k+2)(3\delta)^{k-1} 
\frac{\|\Sigma\|_{\infty}^{1/2}+\sqrt{2\delta}}{\sqrt{n}}
\biggl(\sum_{j=1}^n \|X_j-X_j'\|^2\biggr)^{1/2}
\\
\nonumber
&
\leq 
4\sum_{k\geq 3}(k+2)2^{2(k+1)}3^{k-1} \biggl(\frac{1}{24}\biggr)^{k-3} 
\|C_r\|^3 \|P_r\|_1 \delta^2
\frac{\|\Sigma\|_{\infty}^{1/2}+\sqrt{2\delta}}{\sqrt{n}}
\biggl(\sum_{j=1}^n \|X_j-X_j'\|^2\biggr)^{1/2}
\\
\nonumber
&
\leq 
D \|C_r\|^3 \|P_r\|_1 \delta^2
\frac{\|\Sigma\|_{\infty}^{1/2}+\sqrt{\delta}}{\sqrt{n}}
\biggl(\sum_{j=1}^n \|X_j-X_j'\|^2\biggr)^{1/2},
\end{align}
where $D$ is a numerical constant and 
we used the condition $\|C_r\|_{\infty}\delta\leq 1/24.$
\qed
\end{proof}

We return to the proof of Lemma \ref{le:conc}. 

\begin{proof}
Note that, under condition (\ref{deltanleq}),
the lower bound of Theorem \ref{th_operator} implies that ${\bf r}(\Sigma)\lesssim n.$ 
Let $t\geq 1$ and define
$$
\delta_n(t):=\E\|\hat\Sigma - \Sigma\|_{\infty } + C \|\Sigma\|_{\infty} \left[  \sqrt{\frac{t}{n}} \bigvee \frac{t}{n}  \right].
$$
If constant $C$ in the above definition is sufficiently large and ${\bf r}(\Sigma)\lesssim n,$ then it follows from Theorem \ref{spectrum_sharper}
that 
$\|E\|_{\infty}=\|\hat \Sigma-\Sigma\|_{\infty}\leq \delta_n(t)$
with probability at least $1-e^{-t}.$
Note also that, under condition (\ref{deltanleq}), 
$$
\delta_n(t) \lesssim \|\Sigma\|_{\infty} \left[ \sqrt{\frac{\mathbf{r}(\Sigma)}{n}} \bigvee \sqrt{\frac{t}{n}} \bigvee \frac{t}{n} \right] 
$$ 
(since ${\bf r}(\Sigma)\lesssim n$).

Assume that $ \delta_n(t)\leq  \frac{\bar g_r}{24}.$ Since $\bar g_r\leq \|\Sigma\|_{\infty},$
we have    
$$
C \left[ \sqrt{\frac{t}{n}}\bigvee  \frac{t}{n} \right] \leq \frac{\bar g_r}{24\|\Sigma\|_{\infty}} \leq 1,
$$
which implies that $t\lesssim n.$ 
Thus, in view of the upper bound of Theorem \ref{th_operator}, 
$$
\delta_n(t) \lesssim \|\Sigma\|_{\infty} \left[ \sqrt{\frac{\mathbf{r}(\Sigma)}{n}} \bigvee \sqrt{\frac{t}{n}}\right].
$$

For a random variable $\xi,$ denote by ${\rm Med}(\xi)$ its median. 
Let 
$$M := 
\mathrm{Med}\left(\langle S_r(E), P_r \rangle + \frac{1}{2}\|L_r(E)\|_2^2\right).
$$ 
In what follows, we set $\delta := \delta_n(t)$ in the definition of function $f(X_1,\dots, X_n).$
Suppose that $t\geq \log(4)$ (by adjusting the values of the constants the resulting bound can be easily extended to $t\geq 1$
as it is claimed in Lemma \ref{le:conc}). 
Then, we have ${\mathbb P}\{\|\hat \Sigma-\Sigma\|_{\infty}\geq \delta_n(t)\}\leq \frac{1}{4},$ and
\begin{align*}
\mathbb{P}\left\{ f(X_1,\ldots,X_n) \geq M \right) &\geq \mathbb{P}\left\{f(X_1,\ldots,X_n) \geq M, \, \|E\|_{\infty}<\delta \right\}\\
&\geq \mathbb{P}\left\{ \langle S_r(E), P_r \rangle + \frac{1}{2}\|L_r(E)\|_2^2 \geq M \right\} - \mathbb{P}\left\{\|E\|_{\infty}\geq \delta\right\} \geq \frac{1}{4}.
\end{align*}
Quite similarly, $\mathbb{P}\left\{ f(X_1,\ldots,X_n) \leq M \right\} \geq \frac{1}{4}$. It follows from Lemma \ref{Gaussian_concentration} that with probability at least $1-e^{-t}$ 
\begin{align*}
\left|  f(X_1,\ldots, X_n) - M  \right| &\leq D m_r\frac{\delta^2}{\bar g_r^3} \|\Sigma\|_{\infty}^{1/2}\left(\|\Sigma\|_{\infty}^{1/2} + \sqrt{\delta}\right)\sqrt{\frac{t}{n}}\\
&\leq D'm_r \frac{\|\Sigma\|_{\infty}^{3}}{\bar g_r^3}\left( \frac{\mathbf{r}(\Sigma)}{n} \bigvee \frac{t}{n} \right)\sqrt{\frac{t}{n}},
\end{align*}
for some numerical constant $D'>0.$ Since on the event $\{\|E\|_{\infty}\leq \delta\}$ 
$$\langle S_r(E), P_r \rangle + \frac{1}{2}\|L_r(E)\|_2^2 =f(X_1,\dots, X_n),$$
we easily obtain that with probability at least $1-2e^{-t}$ 
\begin{align}
\label{interconc}
\left|  \langle S_r(E), P_r \rangle + \frac{1}{2}\|L_r(E)\|_2^2 - M  \right| \leq D'm_r \frac{\|\Sigma\|_{\infty}^{3}}{\bar g_r^3}\left( \frac{\mathbf{r}(\Sigma)}{n} \bigvee \frac{t}{n} \right)\sqrt{\frac{t}{n}}.
\end{align}

It remains to prove a similar bound in the case when  $\delta_n(t) > \frac{\bar g_r}{24}$. By definition of $\delta_n(t)$ and in view of assumption (\ref{deltanleq}), we get
\begin{align}
\label{interdelta}
C\|\Sigma\|_{\infty}\left( \sqrt{\frac{t}{n}}\bigvee \frac{t}{n}\right) > \frac{\bar g_r}{24} - \mathbb{E}\|\hat\Sigma - \Sigma\|_{\infty}\geq \frac{\bar g_r}{24} - \frac{\bar g_r}{48} =\frac{\bar g_r}{48}.
\end{align}

In view of (\ref{linear_perturb}), (\ref{remainder_A}), the fact that $\|P_r\|_1 = m_r$ 
and the trace duality inequality, we obtain
\begin{align*}
\left|\langle S_r(E), P_r \rangle + \frac{1}{2}\|L_r(E)\|_2^2\right| &\leq  \|S_r(E)\|_{\infty} \|P_r\|_1 + \|C_r E P_r\|_{2}^2\\
&\leq 14 \frac{\|E\|_{\infty}^2}{\bar g_r^2}\|P_r\|_1 + \|C_r\|_{\infty}^2 \|E\|_{\infty}^2  \|P_r\|_{1}\\
&\leq 15 m_r \frac{\|E\|_{\infty}^2}{\bar g_r^2}.
\end{align*}
Since $\mathbb{P}\{\|E\|_{\infty}\leq \delta\}\geq 1-e^{-t}$, we get that for all $t\geq 1$ with probability at least $1-e^{-t}$ that
\begin{align*}\label{interconc2}
\left|\langle S_r(E), P_r \rangle + \frac{1}{2}\|L_r(E)\|_2^2\right| &\leq D m_r \frac{\|\Sigma\|_{\infty}^2}{\bar g_r^2} \left( \frac{\mathbf{r}(\Sigma)}{n} \bigvee \frac{t}{n} \bigvee \left( \frac{t}{n}\right)^2 \right)\notag
\end{align*}
for some numerical constant $D>0.$ Using this bound with $t=\log 4,$ we easily get that 
$$
|M|\leq {\rm Med} \biggl(\biggl|\langle S_r(E), P_r \rangle + \frac{1}{2}\|L_r(E)\|_2^2\biggr|\biggr)
\leq 
D m_r \frac{\|\Sigma\|_{\infty}^2}{\bar g_r^2} \left( \frac{\mathbf{r}(\Sigma)}{n} \bigvee \frac{\log 4}{n} \bigvee \left( \frac{\log 4}{n}\right)^2 \right).
$$
Combining the last two displays, we get that for some constant $D>0$ and  for all $t\geq 1$ with probability at least $1-e^{-t}$ 
\begin{equation}
\label{interconc''}
\left|\langle S_r(E), P_r \rangle + \frac{1}{2}\|L_r(E)\|_2^2 - M \right| \leq  
D m_r \frac{\|\Sigma\|_{\infty}^2}{\bar g_r^2} 
\left( \frac{\mathbf{r}(\Sigma)}{n} \bigvee \frac{t}{n} \bigvee \left( \frac{t}{n}\right)^2 \right).
\end{equation}
If $\delta_n(t)> \frac{\bar g_r}{24},$ then (\ref{interdelta}) holds and it follows from bound (\ref{interconc''})
that with some constant $D>0$
\begin{equation}
\label{interconc_A}
\left|\langle S_r(E), P_r \rangle + \frac{1}{2}\|L_r(E)\|_2^2 - M \right| \leq  
D m_r \frac{\|\Sigma\|_{\infty}^3}{\bar g_r^3} 
\left( \frac{\mathbf{r}(\Sigma)}{n} \bigvee \frac{t}{n} \bigvee \left( \frac{t}{n}\right)^2 \right)
\left(\sqrt{\frac{t}{n}}\bigvee \frac{t}{n}\right).
\end{equation}
Of course, in the case when $\delta_n(t)\leq \frac{\bar g_r}{24},$ bound (\ref{interconc_A})
also holds (it follows from bound (\ref{interconc})). 
By integrating tail probabilities of bound (\ref{interconc_A}) that holds for all $t\geq 1$ we easily get 
$$
\left|\E\left[\langle S_r(E), P_r \rangle + \frac{1}{2}\|L_r(E)\|_2^2\right] - M\right| \leq  
\E \left|\left[\langle S_r(E), P_r \rangle + \frac{1}{2}\|L_r(E)\|_2^2\right] - M\right| \leq  
$$
$$
\leq D m_r \frac{\|\Sigma\|_{\infty}^3}{\bar g_r^3} \left( \frac{\mathbf{r}(\Sigma)}{n} \bigvee \frac{1}{n} \bigvee \left( \frac{1}{n}\right)^2 \right)\sqrt{\frac{1}{n}}
$$
for some $D>0.$ Thus, we can replace the median $M$ in bound (\ref{interconc_A}) by the expectation 
which yields the bound of Lemma \ref{le:conc}.
\qed
\end{proof}

Consider now three samples $(X_1,\dots, X_n),$ $(\tilde X_1,\dots, \tilde X_n)$ and $(\bar X_1,\dots, \bar X_n)$
of i.i.d. copies of $X$ with $\hat \Sigma, \tilde \Sigma$ and $\bar \Sigma$ being the sample covariances 
based on the corresponding samples of size $n.$ Let $E:=\hat \Sigma-\Sigma, \tilde E:= \tilde \Sigma-\Sigma$ 
and $\bar E:=\bar \Sigma-\Sigma.$ In view of the representation of Lemma \ref{lemma2}, to study the asymptotic 
behavior of $(1+\hat b_r)^2-(1+b_r)^2$ and other related statistics we will have to deal with random vectors 
\begin{align}\label{Xi_r}
\Xi_{r} := \left(\begin{array}{c}
\sqrt{2}\bigl \langle  L_r(E) , L_r(\tilde E)  \bigr \rangle \vspace{.1cm}
 \\
 \sqrt{2}\bigl \langle  L_r(\tilde E) , L_r(\bar E)  \bigr \rangle \vspace{.1cm}
 \\
\|L_r(E)\|_2^2 - \mathbb E \|L_r(E)\|_2^2 \vspace{.1cm}
\\
\|L_r(\tilde E)\|_2^2 - \mathbb E \|L_r(\tilde E)\|_2^2\vspace{.1cm}
\\
\|L_r(\bar E)\|_2^2 - \mathbb E \|L_r(\bar E)\|_2^2
\end{array}
\right)
=\left(\begin{array}{c}
 2\sqrt{2}\bigl \langle P_r E C_r ,  P_r \tilde E C_r  \bigr \rangle \vspace{.1cm}
 \\
 2\sqrt{2}\bigl \langle  P_r\tilde E C_r , P_r \bar E C_r  \bigr \rangle\vspace{.1cm}
 \\
2(\|P_r E C_r\|_2^2 - \mathbb E \|P_r E C_r\|_2^2) \vspace{.1cm}
\\
2(\|P_r \tilde E C_r\|_2^2 - \mathbb E \|P_r \tilde E C_r\|_2^2) \vspace{.1cm}
\\
2(\|P_r \bar E C_r\|_2^2 - \mathbb E \|P_r \bar E C_r\|_2^2)
\end{array}
\right).
\end{align}

Let $\{\eta_{j,k}, \tilde \eta_{j,k},  \bar \eta_{j,k},\, k\in \Delta_r,\,j\in \Delta_s,\,  s\neq r\}$ be i.i.d. standard normal random variables. Define the random vector
\begin{align}\label{joint-technical-1}
\Theta_r :=  \left(\begin{array}{c}
\sqrt{2}\sum_{k \in \Delta_r} \sum_{s\neq r} \frac{\mu_s}{(\mu_s - \mu_r)^2} \sum_{j \in \Delta_s} \eta_{j,k}\tilde \eta_{j,k}  \vspace{.1cm}
 \\
 \sqrt{2}\sum_{k \in \Delta_r} \sum_{s\neq r} \frac{\mu_s}{(\mu_s - \mu_r)^2} \sum_{j \in \Delta_s} \tilde \eta_{j,k}\bar \eta_{j,k}  \vspace{.1cm}
 \\
\sum_{k \in \Delta_r} \sum_{s\neq r} \frac{\mu_s}{(\mu_s - \mu_r)^2} \sum_{j \in \Delta_s} (\eta_{j,k}^2-1)\vspace{.1cm}
\\
\sum_{k \in \Delta_r} \sum_{s\neq r} \frac{\mu_s}{(\mu_s - \mu_r)^2} \sum_{j \in \Delta_s} (\tilde \eta_{j,k}^2-1)
\\
\sum_{k \in \Delta_r} \sum_{s\neq r} \frac{\mu_s}{(\mu_s - \mu_r)^2} \sum_{j \in \Delta_s} (\bar \eta_{j,k}^2-1)
\end{array}
\right).
\end{align}

\begin{lemma}\label{lemma:joint-technical-2}
The following representation holds:
\begin{align}\label{joint-technical-2}
n\Xi_{r} = 2\mu_r \tilde \Theta_r+ \xi,
\end{align}
where $\tilde \Theta_r$ is a random vector in $\R^5$ whose distribution coincides with the distribution of $\Theta_r$
and the components $\xi_j$ of the remainder $\xi \in {\mathbb R}^5$ satisfy the following bound:
$$
\max_{1\leq j\le 5}{\mathbb E}|\xi_j|\lesssim \frac{\|\Sigma\|_{\infty}^2}{\bar g_r^2}\biggl(\frac{m_r^{5/2}}{\sqrt{n}} \vee \frac{m_r^3}{n}\biggr) 
{\bf r}(\Sigma).
$$
\end{lemma}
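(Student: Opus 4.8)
The plan is to pass to the eigenbasis of $\Sigma$, where every quantity becomes an explicit polynomial in i.i.d.\ standard Gaussians, and then to couple the non-Gaussian quadratic and bilinear statistics appearing in $\Xi_r$ with genuine Gaussians, the cost of the coupling being governed by the deviation of the empirical Gram matrices of the $\mu_r$-coordinates from the identity. Concretely, fix an orthonormal basis $(\phi_k)$ of eigenvectors of $\Sigma$ with $\Sigma\phi_k=\sigma_k\phi_k$, $\sigma_k=\mu_s$ for $k\in\Delta_s$, and write $X_l=\sum_k\sqrt{\sigma_k}\,g^{(l)}_k\phi_k$ with i.i.d.\ $N(0,1)$ coefficients $g^{(l)}_k$; do the same for the two auxiliary samples with independent families $\tilde g^{(l)}_k,\bar g^{(l)}_k$. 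Since $P_r\Sigma C_r=0$ one has $P_rEC_r=P_r\hat\Sigma C_r=n^{-1}\sum_l(P_rX_l)\otimes(C_rX_l)$, and substituting the expansions gives
\[
P_rEC_r=\sqrt{\mu_r}\sum_{s\neq r}\frac{\sqrt{\mu_s}}{\mu_r-\mu_s}\sum_{k\in\Delta_r}\sum_{j\in\Delta_s}\frac{W_{j,k}}{\sqrt n}\,\phi_k\otimes\phi_j,\qquad W_{j,k}:=n^{-1/2}\sum_{l=1}^n g^{(l)}_kg^{(l)}_j ,
\]
and likewise for $\tilde E,\bar E$ (with $\tilde W_{j,k},\bar W_{j,k}$). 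Using $\langle\phi_k\otimes\phi_j,\phi_{k'}\otimes\phi_{j'}\rangle=\delta_{kk'}\delta_{jj'}$, each component of $n\Xi_r$ becomes an explicit expression in the $W$'s with weights $\mu_r\mu_s/(\mu_r-\mu_s)^2$, i.e.\ $\mu_r$ times the coefficients $\mu_s/(\mu_s-\mu_r)^2$ appearing in $\Theta_r$; hence $n\Xi_r$ and $2\mu_r\Theta_r$ agree after the formal substitution of the $W$'s by standard normals.

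For the coupling itself, observe that, conditionally on $\mathbf G:=(g^{(l)}_k)_{l\le n,\,k\in\Delta_r}$, the vectors $W_j:=(W_{j,k})_{k\in\Delta_r}\in\R^{m_r}$ (for $j\in\Delta_s$, $s\neq r$) are independent centered Gaussians with common covariance the empirical Gram matrix $\hat G:=n^{-1}\sum_l(g^{(l)}_kg^{(l)}_{k'})_{k,k'\in\Delta_r}$, which is itself the sample covariance of $n$ i.i.d.\ $N(0,I_{m_r})$ vectors. On an enlarged probability space set $\eta_j:=\hat G^{-1/2}W_j$ (using the pseudo-inverse together with an independent Gaussian filling ${\rm ker}\,\hat G$ when $n<m_r$); then the $\eta_j$ are i.i.d.\ $N(0,I_{m_r})$, independent of $\mathbf G$, and $W_j=\hat G^{1/2}\eta_j$. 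Performing the same construction on the two auxiliary samples yields $\tilde\eta_j,\bar\eta_j$ built from $\tilde{\hat G},\bar{\hat G}$, and the three families $\eta,\tilde\eta,\bar\eta$ are jointly independent families of i.i.d.\ standard normals, i.e.\ exactly the ingredients of $\Theta_r$. Thus $\tilde\Theta_r:=\Theta_r(\eta,\tilde\eta,\bar\eta)$ has the law of $\Theta_r$, and $\xi:=n\Xi_r-2\mu_r\tilde\Theta_r$ is an explicit remainder whose diagonal components equal $2\mu_r\sum_{s\neq r}\frac{\mu_s}{(\mu_r-\mu_s)^2}\sum_{j\in\Delta_s}\eta_j\trans(\hat G-I)\eta_j$ and whose off-diagonal components equal $2\sqrt2\,\mu_r\sum_{s\neq r}\frac{\mu_s}{(\mu_r-\mu_s)^2}\sum_{j\in\Delta_s}\eta_j\trans(\hat G^{1/2}\tilde{\hat G}^{1/2}-I)\tilde\eta_j$.

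To bound $\mathbb E|\xi_j|$, condition on the Gram matrices and use the elementary identities $\mathbb E[(\zeta\trans A\zeta-{\rm tr}\,A)^2]=2\|A\|_2^2$ and $\mathbb E[(\zeta\trans M\tilde\zeta)^2]=\|M\|_2^2$ for independent $\zeta,\tilde\zeta\sim N(0,I_{m_r})$ independent of the symmetric $A$ and the matrix $M$; then the contribution of a block $\Delta_s$ is, after taking expectations, at most $m_s\,\mathbb E|{\rm tr}(\hat G-I)|+\sqrt{m_s}\,\mathbb E\|\hat G-I\|_2$ (diagonal components) or $\sqrt{m_s}\,\mathbb E\|\hat G^{1/2}\tilde{\hat G}^{1/2}-I\|_2$ (off-diagonal components). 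A direct second-moment computation gives $\mathbb E|{\rm tr}(\hat G-I)|\le\sqrt{2m_r/n}$ and $\mathbb E\|\hat G-I\|_2^2=(m_r^2+m_r)/n$; also $\|\hat G^{1/2}-I\|_2\le\|\hat G-I\|_2$ (since $|\sqrt\lambda-1|\le|\lambda-1|$ for $\lambda\ge0$) and $\mathbb E\|\hat G^{1/2}\|_\infty\le(1+\mathbb E\|\hat G-I\|_\infty)^{1/2}$, with $\mathbb E\|\hat G-I\|_\infty$ estimated by Theorem~\ref{th_operator} applied to the toy covariance $I_{m_r}$. Splitting $\hat G^{1/2}\tilde{\hat G}^{1/2}-I=\hat G^{1/2}(\tilde{\hat G}^{1/2}-I)+(\hat G^{1/2}-I)$ and using independence of $\hat G$ and $\tilde{\hat G}$ then gives $\mathbb E\|\hat G^{1/2}\tilde{\hat G}^{1/2}-I\|_2\lesssim m_r/\sqrt n$. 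Summing over $j\in\Delta_s$ and $s\neq r$ and using $1/(\mu_r-\mu_s)^2\le1/\bar g_r^2$, $\sum_s\mu_sm_s={\rm tr}(\Sigma)=\|\Sigma\|_\infty\mathbf{r}(\Sigma)$ and $\mu_r\le\|\Sigma\|_\infty$ yields the claimed bound on $\max_j\mathbb E|\xi_j|$.

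The step I expect to be the main obstacle is the coupling: one must verify carefully that the conditional law of $(W_j)_j$ given $\mathbf G$ really is a product of $N(0,\hat G)$-laws, that whitening by $\hat G^{-1/2}$ (suitably extended when $\hat G$ is singular) produces variables that are genuinely standard normal \emph{and} independent of $\mathbf G$, and that the three samples contribute jointly independent such families — these points together are exactly what makes $\tilde\Theta_r\overset{d}{=}\Theta_r$. Everything else is routine second-moment bookkeeping, and the final summation only uses $\bar g_r\le\|\Sigma\|_\infty$ and $\sum_s\mu_s m_s={\rm tr}(\Sigma)$.
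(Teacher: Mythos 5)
Your proposal is correct and follows essentially the same route as the paper: both arguments condition on the $\mu_r$-eigenspace components of the sample, couple the resulting conditionally Gaussian statistics to genuinely standard Gaussians through the square root of the empirical second-moment matrix of those components (your whitening $W_j=\hat G^{1/2}\eta_j$ is the coordinate form of the paper's construction $V=\sum_{k\in\Delta_r}\sqrt{\gamma_k}\,\phi_k\otimes C_rX^{(k)}$ built from the spectral decomposition of $\Gamma_r=n^{-1}\sum_j P_rX_j\otimes P_rX_j$), and control the remainder by second-moment bounds on the fluctuation of that $m_r\times m_r$ sample covariance about the identity, exactly as in the paper's bounds on $\eta$ and $\zeta$. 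The only (immaterial) difference is that you realize $\tilde\Theta_r$ on an enlargement of the original probability space by whitening in place, whereas the paper introduces fresh copies $X^{(k)}$ and passes through a distributional identity for $(U,\tilde U,\bar U)$; your remainder estimates land within the stated bound.
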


\begin{proof} Set
$$
U = \frac{1}{\sqrt{n}}\sum_{j=1}^n P_r X_j \otimes C_r X_j,\quad \tilde U = \frac{1}{\sqrt{n}}\sum_{j=1}^n P_r \tilde X_j \otimes C_r \tilde X_j,\quad \bar U = \frac{1}{\sqrt{n}}\sum_{j=1}^n P_r \bar X_j \otimes C_r \bar X_j
$$
and note that 
$$
n\Xi_r = 
\left(\begin{array}{c}
\vspace{0.1cm}  2\sqrt{2}\bigl\langle U, \tilde U \bigr\rangle\\
\vspace{0.1cm}  2\sqrt{2}\bigl\langle \tilde U, \bar U \bigr\rangle\\
\vspace{0.1cm}  2(\| U\|_2^2 - \mathbb E \| U\|_2^2 )\\
 2( \| \tilde U\|_2^2 - \mathbb E \| \tilde U\|_2^2 )\\ 
 2( \| \bar U\|_2^2 - \mathbb E \| \bar U\|_2^2 )
\end{array}\right).
$$
Let
$$
\Gamma_r = n^{-1}\sum_{j=1}^n P_r X_j \otimes P_r X_j, \quad \tilde \Gamma_r = n^{-1}\sum_{j=1}^n P_r \tilde X_j \otimes P_r \tilde X_j,\quad \bar \Gamma_r = n^{-1}\sum_{j=1}^n P_r \bar X_j \otimes P_r \bar X_j
$$
be the sample covariance operators based, respectively, 
on the ``projected" samples $P_r X_j, j=1,\dots ,n,$ $P_r \tilde X_j, j=1,\dots, n$ and $P_r \bar X_j, j=1,\dots, n$
of i.i.d. centered Gaussian random variables with covariance operator $P_r\Sigma P_r=\mu_rP_r.$
$\Gamma_r, \tilde \Gamma_r, \bar \Gamma_r$ can be viewed as symmetric positive semi-definite operators acting in the eigenspace of eigenvalue $\mu_r$ and they admit the following spectral decompositions:
$$
\Gamma_r = \sum_{k \in \Delta_r} \gamma_r \phi_k \otimes \phi_k,\quad \tilde \Gamma_r = \sum_{k\in \Delta_r} \tilde \gamma_r \tilde \phi_k \otimes \tilde \phi_k,\quad \bar \Gamma_r = \sum_{k\in \Delta_r} \bar \gamma_r \bar \phi_k \otimes \bar\phi_k,
$$
where $\gamma_k\geq 0$ are the eigenvalues of $\Gamma_r$ with associated eigenvectors $\phi_k$, $\tilde \gamma_r\geq 0$ are the eigenvalues of $\tilde \Gamma_r$ with associated eigenvectors $\tilde \phi_k$ and $\bar \gamma_r\geq 0$ are the eigenvalues of $\bar \Gamma_r$ with associated eigenvectors $\bar \phi_k$. 
Note also that $\{\phi_k,\, k \in \Delta_r\}$, $\{\tilde \phi_k,\, k \in \Delta_r\}$ and $\{\bar\phi_k,\, k \in \Delta_r\}$ are three possibly different orthonormal bases of the eigenspace of $\mu_r$.

Let $X^{(k)}$, $\tilde X^{(k)}$, $\bar X^{(k)}$, $k\in \Delta_r$ be independent copies of $X$ (also independent of $X_j$, $\tilde X_j$, $\bar X_j$, $j=1,\ldots,n$). Denote
$$
V = \sum_{k\in \Delta_r} \sqrt{\gamma_k} \phi_k \otimes C_r X^{(k)} ,\quad \tilde V = \sum_{k\in \Delta_r} \sqrt{\tilde \gamma_k}  \tilde \phi_k \otimes C_r \tilde X^{(k)},\quad \bar V = \sum_{k\in \Delta_r} \sqrt{\bar \gamma_k}  \bar\phi_k \otimes C_r \bar X^{(k)}
$$
Given $\{P_r X_1,\ldots, P_r X_n, P_r \tilde X_1,\ldots, P_r\tilde X_n, P_r \bar X_1,\ldots, P_r\bar X_n\}$, the conditional distributions of $(U,\tilde U, \bar U)$ and $(V,\tilde V, \bar V)$ are the same. To see this note that, conditionally on $\{P_r X_1,\ldots, P_r X_n, P_r \tilde X_1,\ldots, P_r\tilde X_n, P_r \bar X_1,\ldots, P_r\bar X_n\},$ $U, \tilde U, \bar U$ are independent centered Gaussian 
random operators and so are $V,\tilde V,\bar V.$\footnote{Recall that $P_r X_j$ and $C_r X_j$ are independent since they are jointly Gaussian and uncorrelated (the last property follows from 
the fact that $P_r C_r=C_r P_r=0$). Thus, conditionally on $\{P_r X_j\},$ $U$ is a mean zero Gaussian
random operator.}
Thus, it is enough to check that conditionally on the same random variables 
the covariance operators of $U$ and $V$ coincide (of course, the same would apply 
to the couples $\tilde U$ and $\tilde V,$ $\bar U$ and $\bar V$). To this end, 
let $T$ denote a linear mapping from ${\mathbb H}\otimes {\mathbb H}\otimes {\mathbb H}\otimes {\mathbb H}$ into itself
such that 
$$
T(u_1\otimes u_2\otimes u_3\otimes u_4)= (u_1\otimes u_3\otimes u_2\otimes u_4)
$$
(note that $T$ is uniquely defined). By an easy computation,
$$
{\mathbb E}(U\otimes U| P_rX_j, j=1,\dots, n)= T(\Gamma_r\otimes (C_r\Sigma C_r))
={\mathbb E}(V\otimes V| P_rX_j, j=1,\dots, n),
$$
which implies the claim for $U$ and $V$ (see also the proof of Lemma 5 in \cite{Koltchinskii_Lounici_HS} for more details 
on this argument). 

Consequently, the distribution of $n \,\Xi_r$ coincides with the distribution of 
\begin{align*}
\Lambda_r:=\left(\begin{array}{c}
\vspace{0.1cm}  2\sqrt{2}\bigl\langle V, \tilde V \bigr\rangle\\
\vspace{0.1cm}  2\sqrt{2}\bigl\langle \tilde V, \bar V \bigr\rangle\\
\vspace{0.1cm}  2(\| V\|_2^2 - \mathbb E \| V\|_2^2 )\\
 2( \| \tilde V\|_2^2 - \mathbb E \| \tilde V\|_2^2 )\\ 
 2( \| \bar V\|_2^2 - \mathbb E \| \bar V\|_2^2 )
\end{array}\right).
\end{align*}

Note that 
$$
\langle V,\tilde V\rangle = \sum_{k,l\in \Delta_r} \sqrt{\gamma_k}\sqrt{\tilde \gamma_l}
\langle \phi_k \otimes C_r X^{(k)}, \tilde \phi_l \otimes C_r \tilde X^{(l)}\rangle
=\mu_r\sum_{k,l\in \Delta_r} \langle \phi_k, \tilde \phi_l \rangle \langle C_r X^{(k)}, C_r \tilde X^{(l)}\rangle
+\eta,
$$
where 
$$
\eta:=\sum_{k,l\in \Delta_r} (\sqrt{\gamma_k}\sqrt{\tilde \gamma_l}-\mu_r)\langle \phi_k, \tilde \phi_l \rangle \langle C_r X^{(k)}, C_r \tilde X^{(l)}\rangle
$$
For the remainder $\eta,$ the following bound holds:
$$
|\eta|\leq \biggl(\sum_{k,l\in \Delta_r}(\sqrt{\gamma_k}\sqrt{\tilde \gamma_l}-\mu_r)^2\biggr)^{1/2}
\biggl(\sum_{k\in \Delta_r}\|C_r X^{(k)}\|^2 \sum_{l\in \Delta_r}\|C_r \tilde X^{(l)}\|^2\biggr)^{1/2},
$$
which, using the independence of $\gamma_k, \tilde \gamma_l, C_r X^{(k)}, C_r \tilde X^{(l)}$ easily implies 
that 
$$
{\mathbb E}|\eta| 
\leq \biggl({\mathbb E}\sum_{k,l\in \Delta_r}(\sqrt{\gamma_k}\sqrt{\tilde \gamma_l}-\mu_r)^2\biggr)^{1/2}
\biggl({\mathbb E}\sum_{k\in \Delta_r}\|C_r X^{(k)}\|^2 {\mathbb E}\sum_{l\in \Delta_r}\|C_r \tilde X^{(l)}\|^2\biggr)^{1/2}
$$
$$
\leq m_r \biggl({\mathbb E}\sum_{k,l\in \Delta_r}(\sqrt{\gamma_k}\sqrt{\tilde \gamma_l}-\mu_r)^2\biggr)^{1/2}
{\mathbb E}\|C_r X\|^2.
$$
Observe also that 
$$
\Bigl|\sqrt{\gamma_k}\sqrt{\tilde \gamma_l}-\mu_r\Bigr|
\leq \frac{\gamma_k \tilde \gamma_l-\mu_r^2}{\mu_r}
\leq |\gamma_k-\mu_r|+ |\tilde \gamma_l-\mu_r|+ \frac{|\gamma_k-\mu_r||\tilde \gamma_l-\mu_r|}{\mu_r},
$$
which implies 
$$
\sum_{k,l\in \Delta_r}(\sqrt{\gamma_k}\sqrt{\tilde \gamma_l}-\mu_r)^2
\leq 
3 m_r \sum_{k\in \Delta_r}(\gamma_k-\mu_r)^2
+
3 m_r \sum_{l\in \Delta_r}(\tilde \gamma_l-\mu_r)^2
+
\frac{3}{\mu_r^2} \sum_{k\in \Delta_r}(\gamma_k-\mu_r)^2
\sum_{l\in \Delta_r}(\tilde \gamma_l-\mu_r)^2
$$
$$
\leq 3 m_r \|\Gamma_r-\mu_r P_r\|_2^2 +3 m_r  \|\tilde \Gamma_r-\mu_r P_r\|_2^2
+ \frac{3}{\mu_r^2} \|\Gamma_r-\mu_r P_r\|_2^2\|\tilde \Gamma_r-\mu_r P_r\|_2^2.
$$
Hence, we get (using independence of $\Gamma_r, \tilde \Gamma_r$)
$$
{\mathbb E}\sum_{k,l\in \Delta_r}(\sqrt{\gamma_k}\sqrt{\tilde \gamma_l}-\mu_r)^2
\leq 
3 m_r {\mathbb E}\|\Gamma_r-\mu_r P_r\|_2^2 +3 m_r  {\mathbb E}\|\tilde \Gamma_r-\mu_r P_r\|_2^2
+ \frac{3}{\mu_r^2} {\mathbb E}\|\Gamma_r-\mu_r P_r\|_2^2{\mathbb E}\|\tilde \Gamma_r-\mu_r P_r\|_2^2.
$$
Since $\Gamma_r, \tilde \Gamma_r$ are sample covariances based on $n$ i.i.d. centered 
Gaussian observations with the true covariance $\mu_r P_r,$
we easily get 
$$
{\mathbb E}\|\Gamma_r-\mu_r P_r\|_2^2 = {\mathbb E}\|\tilde \Gamma_r-\mu_r P_r\|_2^2
\leq \frac{{\mathbb E}\|P_r X\|^4}{n}
\lesssim \frac{\Bigl({\rm tr}(\mu_r P_r)\Bigr)^2}{n}= \frac{\mu_r^2 m_r^2}{n}.
$$
Therefore, 
$$
{\mathbb E}\sum_{k,l\in \Delta_r}(\sqrt{\gamma_k}\sqrt{\tilde \gamma_l}-\mu_r)^2
\lesssim  \frac{\mu_r^2 m_r^3}{n} + \frac{\mu_r^2 m_r^4}{n^2}. 
$$
This yields the following bound on ${\mathbb E}|\eta|:$
\begin{align}
\label{bd_et}
&
\nonumber
{\mathbb E}|\eta|\lesssim \biggl(\frac{\mu_r m_r^{5/2}}{\sqrt{n}} + \frac{\mu_r m_r^3}{n}\biggr) 
{\mathbb E}\|C_r X\|^2 = \biggl(\frac{\mu_r m_r^{5/2}}{\sqrt{n}} + \frac{\mu_r m_r^3}{n}\biggr)
{\rm tr}(C_r \Sigma C_r)
\\
&
\lesssim \frac{\|\Sigma\|_{\infty}^2}{\bar g_r^2}\biggl(\frac{m_r^{5/2}}{\sqrt{n}} \vee \frac{m_r^3}{n}\biggr) 
{\bf r}(\Sigma). 
\end{align}

Similarly, we have
\begin{align*}
\|V\|_2^2 &= \sum_{k\in \Delta_r} \gamma_k \|\phi_k \otimes C_r X^{(k)}\|_2^2=\sum_{k\in \Delta_r} \gamma_k \|C_r X^{(k)}\|^2,
\end{align*}
which implies 
$$
\|V\|_2^2 - {\mathbb E}\|V\|_2^2 = \mu_r\sum_{k\in \Delta_r} \Bigl[\|C_r X^{(k)}\|^2-{\mathbb E}\|C_r X^{(k)}\|^2\Bigr]+\zeta,
$$
where 
$$
\zeta := \sum_{k\in \Delta_r} \Bigl[(\gamma_k-\mu_r)\|C_r X^{(k)}\|^2-{\mathbb E}(\gamma_k-\mu_r)\|C_r X^{(k)}\|^2\Bigr].
$$
The following bound is immediate
$$
{\mathbb E}|\zeta|\leq 2 {\mathbb E}\max_{k\in \Delta_r}|\gamma_k-\mu_r| \sum_{k\in \Delta_r}{\mathbb E}\|C_r X^{(k)}\|^2
\leq 2m_r {\mathbb E}\|\Gamma_r-\mu_r P_r\|_{\infty}{\mathbb E}\|C_r X\|^2,
$$
where we used the independence of random variables $\gamma_k, k\in \Delta_r$ and $C_r X^{(k)}, k\in \Delta_r.$
Applying the bound of Theorem \ref{th_operator} to the sample covariance $\Gamma_r,$ we easily get 
$$
{\mathbb E}\|\Gamma_r-\mu_r P_r\|_{\infty}
\lesssim \mu_r \biggl(\sqrt{\frac{m_r}{n}}\vee \frac{m_r}{n}\biggr).
$$
Therefore, we can conclude that 
\begin{equation}
\label{bd_z}
{\mathbb E}|\zeta|
\lesssim m_r \mu_r \biggl(\sqrt{\frac{m_r}{n}}\vee \frac{m_r}{n}\biggr){\rm tr}(C_r\Sigma C_r)
\lesssim \frac{\|\Sigma\|_{\infty}^2}{\bar g_r^2}\biggl(\frac{m_r^{3/2}}{\sqrt{n}}\vee \frac{m_r^2}{n}\biggr){\bf r}(\Sigma).
\end{equation}

As a consequence of (\ref{bd_et}), (\ref{bd_z}) and similar bounds for other components of vector $\Lambda_r,$ we get that  
\begin{align}
\label{vec-vec}
\left(\begin{array}{c}
\vspace{0.1cm} 2 \sqrt{2}\bigl\langle V, \tilde V \bigr\rangle\\
\vspace{0.1cm} 2 \sqrt{2}\bigl\langle \tilde V, \bar V \bigr\rangle\\
\vspace{0.1cm} 2( \| V\|_2^2 - \mathbb E \| V\|_2^2) \\
 2( \| \tilde V\|_2^2 - \mathbb E \|\tilde  V\|_2^2) \\
 2( \| \bar V\|_2^2 - \mathbb E \|\bar  V\|_2^2) 
\end{array}\right)&= 2\mu_r  \tilde \Theta_r + \xi,
\end{align}
where
$$
\tilde \Theta_r= \left(\begin{array}{c}
\vspace{0.1cm} \sqrt{2}  \Bigl \langle   \sum_{k\in \Delta_r} \phi_k \otimes C_r X^{(k)} , \sum_{k\in \Delta_r}\tilde \phi_{k} \otimes C_r \tilde X^{(k)} \Bigr \rangle\\
\vspace{0.1cm} \sqrt{2}  \Bigl \langle   \sum_{k\in \Delta_r}\tilde \phi_{k} \otimes C_r \tilde X^{(k)} , \sum_{k\in \Delta_r}\bar \phi_{k} \otimes C_r \bar X^{(k)} \Bigr \rangle\\
\vspace{0.1cm} \sum_{k\in \Delta_r} \|C_r X^{(k)} \|^2 -  \sum_{k\in \Delta_r} \mathbb E\|C_r X^{(k)} \|^2 \\
\sum_{k\in \Delta_r}\|C_r \tilde X^{(k)}\|^2 - \sum_{k\in \Delta_r}\mathbb E \|C_r \tilde X^{(k)}\|^2 \\
\sum_{k\in \Delta_r}\|C_r \bar X^{(k)}\|^2 - \sum_{k\in \Delta_r}\mathbb E \|C_r \bar X^{(k)}\|_2^2 
\end{array}\right)
$$
and $\xi\in {\mathbb R}^5$ is a random vector with the components satisfying the following 
bound:
$$
\max_{1\leq j\le 5}{\mathbb E}|\xi_j|\lesssim \frac{\|\Sigma\|_{\infty}^2}{\bar g_r^2}\biggl(\frac{m_r^{5/2}}{\sqrt{n}} \vee \frac{m_r^3}{n}\biggr) 
{\bf r}(\Sigma).
$$
%
It remains to show that the distribution of $\tilde \Theta_r$ coincides with the distribution of $\Theta_r.$
To this end, note that the following representation holds: 
$$
C_r X^{(k)}= \sum_{s\neq r} \frac{1}{\mu_r-\mu_s} P_s X^{(k)}
= \sum_{s\neq r}\frac{\mu_s^{1/2}}{\mu_r-\mu_s} \sum_{j\in \Delta_s}\eta_{j,k} \theta_j,\ k\in \Delta_r, 
$$
where, for all $s\geq 1,$ $\theta_j, j\in \Delta_s$ is an orthonormal basis of the eigenspace of $\Sigma$
corresponding to the eigenvalue $\mu_s$ and $\{\eta_{j,k}\}$ are i.i.d. standard normal random 
variables.  Similarly, we have 
$$
C_r \tilde X^{(k)}
= \sum_{s\neq r}\frac{\mu_s^{1/2}}{\mu_r-\mu_s} \sum_{j\in \Delta_s}\tilde \eta_{j,k} \theta_j,\ k\in \Delta_r, 
$$
and 
$$
C_r \bar X^{(k)}
= \sum_{s\neq r}\frac{\mu_s^{1/2}}{\mu_r-\mu_s} \sum_{j\in \Delta_s}\bar \eta_{j,k} \theta_j,\ k\in \Delta_r, 
$$
where $\{\tilde \eta_{j,k}\}, \{\bar \eta_{j,k}\}$ are i.i.d. standard normal random variables (also independent 
of $\{\eta_{j,k}\}$). Moreover, in addition $\{\eta_{j,k}\}, \{\tilde \eta_{j,k}\}, \{\bar \eta_{j,k}\}$ are independent 
of the samples $X_1,\dots, X_n, \tilde X_1,\dots, \tilde X_n, \bar X_1,\dots, \bar X_n.$ 
Denote 
$$
\eta_j := \sum_{k\in \Delta_r}\eta_{j,k}\theta_k, \tilde \eta_j:= \sum_{k\in \Delta_r}\tilde \eta_{j,k}\theta_k,
\bar \eta_j := \sum_{k\in \Delta_r} \bar \eta_{j,k}\theta_k.
$$
We will also need 
$$
\eta_j' := \sum_{k\in \Delta_r}\eta_{j,k}\phi_k, \tilde \eta_j':= \sum_{k\in \Delta_r}\tilde \eta_{j,k}\tilde \phi_k,
\bar \eta_j' := \sum_{k\in \Delta_r} \bar \eta_{j,k}\bar \phi_k.
$$
Note that conditionally on $\phi_k, \tilde \phi_k, \bar \phi_k,$ the distributions of random vectors 
$\eta_j', \tilde \eta_j', \bar \eta_j', j\in \Delta_s, s\neq r$ is the same as the distribution of random vectors 
$\eta_j, \tilde \eta_j, \bar \eta_j, j\in \Delta_s, s\neq r$
(that are independent ``standard normal" random vectors in the eigenspace of the eigenvalue $\mu_r$).  
In addition to this,
$$
\|\eta_j\|^2=\|\eta_j'\|^2, \|\tilde \eta_j\|^2=\|\tilde \eta_j'\|^2, \|\bar \eta_j\|^2=\|\bar \eta_j'\|^2.
$$
By a straightforward computation, the vector $\tilde \Theta_r$ can be written as follows:
$$
\tilde \Theta_r = \left(\begin{array}{c}
\sqrt{2}\sum_{s\neq r} \frac{\mu_s}{(\mu_r - \mu_s)^2} \sum_{j \in \Delta_s} \langle \eta_j', \tilde \eta_j'\rangle\\
\sqrt{2}\sum_{s\neq r} \frac{\mu_s}{(\mu_r - \mu_s)^2} \sum_{j \in \Delta_s} \langle \tilde \eta_j', \bar \eta_j'\rangle \\
\sum_{s\neq r} \frac{\mu_s}{(\mu_r - \mu_s)^2} \sum_{j \in \Delta_s} [\|\eta_j\|^2 - {\mathbb E}\|\eta_j\|^2]\\
\sum_{s\neq r} \frac{\mu_s}{(\mu_r - \mu_s)^2} \sum_{j \in \Delta_s} [\|\tilde \eta_j\|^2 - {\mathbb E}\|\tilde \eta_j\|^2]\\
\sum_{s\neq r} \frac{\mu_s}{(\mu_r - \mu_s)^2} \sum_{j \in \Delta_s} [\|\bar \eta_j\|^2 - {\mathbb E}\|\bar \eta_j\|^2]
\end{array}\right),
$$
and it has the same distribution as
$$ 
\left(\begin{array}{c}
\sqrt{2}\sum_{s\neq r} \frac{\mu_s}{(\mu_r - \mu_s)^2} \sum_{j \in \Delta_s} \langle \eta_j, \tilde \eta_j\rangle\\
\sqrt{2}\sum_{s\neq r} \frac{\mu_s}{(\mu_r - \mu_s)^2} \sum_{j \in \Delta_s} \langle \tilde \eta_j, \bar \eta_j\rangle \\
\sum_{s\neq r} \frac{\mu_s}{(\mu_r - \mu_s)^2} \sum_{j \in \Delta_s} [\|\eta_j\|^2 - {\mathbb E}\|\eta_j\|^2]\\
\sum_{s\neq r} \frac{\mu_s}{(\mu_r - \mu_s)^2} \sum_{j \in \Delta_s} [\|\tilde \eta_j\|^2 - {\mathbb E}\|\tilde \eta_j\|^2]\\
\sum_{s\neq r} \frac{\mu_s}{(\mu_r - \mu_s)^2} \sum_{j \in \Delta_s} [\|\bar \eta_j\|^2 - {\mathbb E}\|\bar \eta_j\|^2]
\end{array}\right)=\Theta_r.
$$
This completes the proof of the lemma.
\qed
\end{proof}

\section{Proofs: limit theorems}

In this section, we turn to the proofs of theorems \ref{th_odin}, \ref{th_dva} and \ref{th_tri}. Recall the asymptotic framework 
of Section 3 in which $X_1^{(n)}, \dots, X_n^{(n)},$ $\tilde X_1^{(n)}, \dots, \tilde X_n^{(n)}$ and $\bar X_1^{(n)}, \dots, \bar X_n^{(n)}$
are three samples of size $n$ each consisting of i.i.d. copies of a centered Gaussian random vector $X^{(n)}$ with covariance 
$\Sigma^{(n)}.$ Similarly to the non-asymptotic framework, we consider the spectral decomposition $\Sigma^{(n)}=\sum_{r\geq 1}\mu_r^{(n)}P_r^{(n)}$ and we are interested in the estimation of the spectral projector $P^{(n)}=P_{r_n}^{(n)}$ of $\Sigma^{(n)}$
corresponding to its eigenvalue $\mu^{(n)}=\mu_{r_n}^{(n)}$ of multiplicity $m^{(n)}=m_{r_n}^{(n)}.$ 
We define three sample covariance operators (based on the three samples of size $n$): 
$$
\hat \Sigma^{(n)}:=  \frac{1}{n}\sum_{i=1}^n X_i^{(n)} \otimes X_i^{(n)},\quad  \tilde \Sigma^{(n)}:=  \frac{1}{n}\sum_{i=1}^n \tilde X_i^{(n)} \otimes\tilde X_i^{(n)},\quad \bar \Sigma^{(n)}:=  \frac{1}{n}\sum_{i=1}^n \bar X_i^{(n)} \otimes \bar X_i^{(n)}
$$
and set 
$$
E^{(n)}:=\hat \Sigma^{(n)} - \Sigma^{(n)},\quad \tilde E^{(n)} := \tilde \Sigma^{(n)} - \Sigma^{(n)},\quad \bar E^{(n)} := \bar \Sigma^{(n)} - \Sigma^{(n)}.
$$ 
Recall that $C^{(n)}=C_{r_n}^{(n)}= \sum_{s\neq r_n}\frac{1}{\mu_{r_n}^{(n)}-\mu_s^{(n)}}P_s^{(n)}$ and 
$$
B_n=B_{r_n}(\Sigma^{(n)})= 2\sqrt{2}\|C^{(n)}\Sigma^{(n)}C^{(n)}\|_2\|P^{(n)}\Sigma^{(n)}P^{(n)}\|_2.
$$
For a bounded linear operator 
$W:{\mathbb H}\mapsto {\mathbb H},$ 
we will denote, 
$$
L^{(n)}(W)=L_{r_n}^{(n)}(W):=P^{(n)}WC^{(n)}+ C^{(n)}WP^{(n)}.
$$
Recall that, in theorems \ref{th_odin}, \ref{th_dva} and \ref{th_tri}, it is supposed that Assumption \ref{ass_sigma_n} is satisfied
and, moreover, that $\mu^{(n)}$ is the eigenvalue of multiplicity $m^{(n)}=1.$ 
In this case, $\Delta_{r_n}^{(n)}=\{k_n\}$ for some $k_n\geq 1.$

Define the following sequences of random vectors with values in $\R^5$:
\begin{align*}
\Xi^{(n)} := \left(\begin{array}{c}
\sqrt{2}\bigl \langle  L^{(n)}(E^{(n)}) , L^{(n)}(\tilde E^{(n)})  \bigr \rangle \vspace{.1cm}
 \\
 \sqrt{2}\bigl \langle  L^{(n)}(\tilde E^{(n)}) , L^{(n)}(\bar E^{(n)})  \bigr \rangle \vspace{.1cm}
 \\
\|L^{(n)}(E^{(n)})\|_2^2 - \mathbb E \|L^{(n)}(E^{(n)})\|_2^2 
\vspace{.1cm}
\\
\|L^{(n)}(\tilde E^{(n)})\|_2^2 - \mathbb E \|L^{(n)}(\tilde E^{(n)})\|_2^2 
\vspace{.1cm}
\\
\|L^{(n)}(\bar E^{(n)})\|_2^2 - \mathbb E \|L^{(n)}(\bar E^{(n)})\|_2^2
\end{array}
\right)
\end{align*}
and 
\begin{align*}
\Theta^{(n)} :=  \left(\begin{array}{c}
\sqrt{2}\sum_{s\neq r_{n}} \frac{\mu_{s}^{(n)}}{(\mu_{r_{n}}^{(n)} - \mu_{s}^{(n)})^2} \sum_{j \in \Delta_{s}} \eta_{j,k_n}^{(n)}\tilde \eta_{j,k_n}^{(n)}  \vspace{.1cm}
 \\
 \sqrt{2}\sum_{s\neq r_{n}} \frac{\mu_{s}^{(n)}}{(\mu_{r_{n}}^{(n)} - \mu_{s}^{(n)})^2} \sum_{j \in \Delta_{s}} \tilde \eta_{j,k_n}^{(n)}\bar \eta_{j,k_n}^{(n)}  \vspace{.1cm}
 \\
\sum_{s\neq r_{n}} \frac{\mu_{s}^{(n)}}{(\mu_{r_{n}}^{(n)} - \mu_{s}^{(n)})^2} \sum_{j \in \Delta_{s}}[(\eta_{j,k_n}^{(n)})^2-1]\vspace{.1cm}
\\
\sum_{s\neq r_{n}} \frac{\mu_{s}^{(n)}}{(\mu_{r_{n}}^{(n)} - \mu_{s}^{(n)})^2} \sum_{j \in \Delta_{s}}[(\tilde \eta_{j,k_n}^{(n)})^2-1]\\
\sum_{s\neq r_{n}} \frac{\mu_{s}^{(n)}}{(\mu_{r_{n}}^{(n)} - \mu_{s}^{(n)})^2} \sum_{j \in \Delta_{s}}[(\bar 
\eta_{j,k_n}^{(n)})^2-1]
\end{array}
\right),
\end{align*}
where $\eta_{j,k}, \tilde \eta_{j,k}, \bar \eta_{j,k}, j,k\geq 1$ are i.i.d. standard normal random variables. 
Denote 
$$
\bar B_n :=\biggl(2\sum_{s\neq r_{n}} \frac{m_s^{(n)}(\mu_{s}^{(n)})^2}{(\mu_{r_{n}}^{(n)} - \mu_{s}^{(n)})^4}\biggr)^{1/2}.  
$$
It is immediate to see that $B_n= 2\mu^{(n)}\bar B_n$ and, in view of Lemma \ref{lemma:joint-technical-2}, 
$$
n\Xi^{(n)} = 2\mu^{(n)}\tilde \Theta^{(n)}+ \xi^{(n)},
$$
where $\tilde \Theta^{(n)}$ has the same distribution as $\Theta^{(n)}$
and the remainder $\xi^{(n)}\in {\mathbb R}^5$ satisfies the bound
$$
\max_{1\leq j\leq 5}{\mathbb E}|\xi_j^{(n)}|\lesssim 
\biggl(\frac{\|\Sigma^{(n)}\|_{\infty}}{\bar g^{(n)}}\biggr)^2\frac{{\bf r}(\Sigma^{(n)})}{\sqrt{n}}.
$$
(where we also used the assumption that $m^{(n)}=1$). Under Assumption \ref{ass_sigma_n}, 
this implies that 
$$
\frac{\xi^{(n)}}{B_n} =o_{{\mathbb P}}(1)\ {\rm as}\ n\to\infty,
$$
and we get 
\begin{equation}
\label{Xi-asympt}
\frac{n \Xi^{(n)}}{B_n} = \frac{\tilde \Theta^{(n)}}{\bar B_n} + o_{{\mathbb P}}(1).
\end{equation}

We need a simple lemma that will allow us to prove 
that the sequence of random variables $\frac{\tilde \Theta^{(n)}}{\bar B_n}$ is asymptotically 
standard normal implying the same limit distribution for $\frac{n \Xi^{(n)}}{B_n}.$ 

Let $\{\eta, \eta_k^{(n)},\tilde \eta_k^{(n)},\bar \eta_k^{(n)}, \,k\geq 1\}$ be i.i.d. standard normal random variables
and let $\lambda_k^{(n)}>0, k\geq 1, n\geq 1$ be positive real numbers with $\sum_{k\geq 1}\lambda_k^{(n)}<\infty, n\geq 1.$
Define
$$
\vartheta_n :=  \left(\begin{array}{c}
\sqrt{2}\sum_{k \geq 1} \lambda_k^{(n)} \eta_{k}^{(n)}\tilde \eta_{k}^{(n)}  \vspace{.1cm}
 \\
 \sqrt{2}\sum_{k \geq 1} \lambda_k^{(n)} \tilde \eta_{k}^{(n)}\bar \eta_{k}^{(n)}  \vspace{.1cm}
 \\
\sum_{k \geq 1} \lambda_k^{(n)}[(\eta_{k}^{(n)})^2-1]\vspace{.1cm}
\\
\sum_{k \geq 1} \lambda_k^{(n)}[(\tilde \eta_{k}^{(n)})^2-1]\\
\sum_{k \geq 1} \lambda_k^{(n)}[(\bar \eta_{k}^{(n)})^2-1]
\end{array}
\right)
$$
and let
$$
\bar B_{n} = \biggl(2\sum_{k\geq 1} ( \lambda_k^{(n)})^2\biggr)^{1/2},\ n\geq 1.
$$

\begin{lemma}\label{joint-clt-theta}
If 
$$
\frac{\bar B_{n}}{\sup_{k\geq 1} \lambda_k^{(n)}} \rightarrow \infty,\quad n\rightarrow \infty,
$$
then the sequence of random vectors
$$
\frac{1}{\bar B_{n}} \vartheta_{n},\quad n\geq 1
$$
converges in distribution to a standard normal random vector $Z_5$ in ${\mathbb R}^5.$
\end{lemma}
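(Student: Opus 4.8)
The plan is to prove the convergence by the Cramér--Wold device, thereby reducing it to a one-dimensional Lindeberg central limit theorem for weighted sums of i.i.d.\ random variables. First I would fix $a=(a_1,\dots,a_5)\in\R^5$ and write
$$
\langle a,\vartheta_n\rangle=\sum_{k\geq 1}\lambda_k^{(n)}W_k^{(n)},
$$
where
\begin{align*}
W_k^{(n)}:={}&\sqrt2\,a_1\eta_k^{(n)}\tilde\eta_k^{(n)}+\sqrt2\,a_2\tilde\eta_k^{(n)}\bar\eta_k^{(n)}\\
&+a_3\bigl((\eta_k^{(n)})^2-1\bigr)+a_4\bigl((\tilde\eta_k^{(n)})^2-1\bigr)+a_5\bigl((\bar\eta_k^{(n)})^2-1\bigr).
\end{align*}
The variables $(W_k^{(n)})_{k\geq 1}$ are i.i.d., have mean zero, possess moments of all orders (being polynomials in independent standard Gaussians), and their common law does not depend on $n$. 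Since $\sum_k(\lambda_k^{(n)})^2\leq\bigl(\sup_k\lambda_k^{(n)}\bigr)\sum_k\lambda_k^{(n)}<\infty$, the series converges in $L^2$ and almost surely by Kolmogorov's theorem.

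The next step is to compute $\mathrm{Var}(W_1^{(n)})$. Using the independence of $\eta_k^{(n)},\tilde\eta_k^{(n)},\bar\eta_k^{(n)}$ and the vanishing of odd Gaussian moments, one checks that all ten cross-covariances among the five summands defining $W_1^{(n)}$ vanish; in particular the only potentially dangerous pairing, between the first two coordinates which share the variable $\tilde\eta_k^{(n)}$, is killed by $\E\bigl[\eta_k^{(n)}(\tilde\eta_k^{(n)})^2\bar\eta_k^{(n)}\bigr]=0$. Hence $\mathrm{Var}(W_1^{(n)})=2(a_1^2+\cdots+a_5^2)=2|a|^2$, so that $\mathrm{Var}(\langle a,\vartheta_n\rangle)=2|a|^2\sum_k(\lambda_k^{(n)})^2=|a|^2\bar B_n^2$, which is exactly $\bar B_n^2\,\mathrm{Var}(\langle a,Z_5\rangle)$.

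Then I would apply the Lindeberg--Feller theorem to the triangular array $X_{n,k}:=\lambda_k^{(n)}W_k^{(n)}/\bar B_n$, $k\geq 1$, whose row sums have variance $|a|^2$ for every $n$ by the previous step. For $\eps>0$, since $|X_{n,k}|\leq |W_k^{(n)}|\,(\sup_j\lambda_j^{(n)})/\bar B_n$, the event $\{|X_{n,k}|>\eps\}$ is contained in $\{|W_k^{(n)}|>\eps c_n\}$ with $c_n:=\bar B_n/\sup_j\lambda_j^{(n)}\to\infty$ by hypothesis; using that the $W_k^{(n)}$ are identically distributed,
\begin{align*}
\sum_{k\geq 1}\E\bigl[X_{n,k}^2\,\mathbf{1}\{|X_{n,k}|>\eps\}\bigr]
&\leq\frac{\sum_k(\lambda_k^{(n)})^2}{\bar B_n^2}\,\E\bigl[(W_1^{(n)})^2\,\mathbf{1}\{|W_1^{(n)}|>\eps c_n\}\bigr]\\
&=\tfrac12\,\E\bigl[(W_1^{(n)})^2\,\mathbf{1}\{|W_1^{(n)}|>\eps c_n\}\bigr]\longrightarrow 0
\end{align*}
by dominated convergence, since $\E(W_1^{(n)})^2<\infty$ and $c_n\to\infty$. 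Thus $\langle a,\vartheta_n\rangle/\bar B_n\overset{d}{\longrightarrow}N(0,|a|^2)=\langle a,Z_5\rangle$ for every $a\in\R^5$, and the Cramér--Wold theorem gives $\vartheta_n/\bar B_n\overset{d}{\longrightarrow}Z_5$.

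The only genuinely delicate point is the covariance bookkeeping in the variance computation — in particular verifying that the variable $\tilde\eta_k^{(n)}$ appearing in both the first and second coordinates does not create a non-vanishing correlation — while everything else is the standard Lindeberg central limit theorem for weighted sums of i.i.d.\ random variables with finite variance, the hypothesis $\bar B_n/\sup_k\lambda_k^{(n)}\to\infty$ being precisely the uniform-smallness condition on the weights.
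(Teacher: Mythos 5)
Your proof is correct and follows essentially the same route as the paper: Cramér--Wold reduction, the identical decomposition $\langle a,\vartheta_n\rangle=\sum_k\lambda_k^{(n)}W_k^{(n)}$ into i.i.d.\ mean-zero summands of variance $2|a|^2$, and the Lindeberg--Feller theorem with the hypothesis $\bar B_n/\sup_k\lambda_k^{(n)}\to\infty$ supplying uniform smallness of the weights. The only difference is cosmetic: you verify the Lindeberg condition by dominated convergence, exploiting that the law of $W_1^{(n)}$ does not depend on $n$, whereas the paper uses Cauchy--Schwarz together with the sub-exponential tails of $\vartheta_n(a,k)$; both are standard and your variance bookkeeping (in particular the vanishing of the cross-covariance between the two terms sharing $\tilde\eta_k^{(n)}$) is a detail the paper leaves implicit.
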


\begin{proof}
The proof of this result is an easy application of Lindeberg version of the CLT.
We will establish the convergence in distribution of $\langle \vartheta_n, a \rangle$ to a normal random variable $N(0,|a|^2)$ for an
arbitrary $a\in \R^5.$
For a vector $a=(a_1,\dots, a_5)\in \R^5,$ set 
$$
\vartheta_n(a,k):=
a_1 \sqrt{2} \eta_{k}^{(n)}\tilde \eta_{k}^{(n)} +  a_2 \sqrt{2}\tilde \eta_{k}^{(n)}\bar \eta_{k}^{(n)}  + a_3 [(\eta_{k}^{(n)})^2-1] + a_4 [(\tilde \eta_{k}^{(n)})^2-1] + a_5 [(\bar \eta_{k}^{(n)})^2-1],\ k\geq 1.
$$
Without loss of generality, assume that $|a|=1.$
Note that r.v. $\vartheta_n(a,k), k\geq 1$ are i.i.d., ${\mathbb E}\vartheta_n(a,k)=0$ 
and ${\rm Var}(\vartheta_n(a,k))=2.$
Therefore, for 
$$
\zeta_n(a):= \frac{1}{\bar B_{n}}\langle \vartheta_n, a \rangle=\frac{\sum_{k\geq 1}\lambda_k^{(n)} \vartheta_n(a,k) }{\bar B_n},
$$ 
it holds that  
${\mathbb E}\zeta_n(a)=0$ and ${\rm Var}(\zeta_n(a))=1.$ 
In textbook versions of the central limit theorem, the 
result is usually stated for sums of finite triangular arrays 
of independent random variables. In our case, the sums are infinite.
However, it is easy to reduce the problem to the finite case by truncating
the series to $p_n$ terms, where $p_n$ is such that $\sum_{k>p_n}\lambda_{k}^{(n)}=o(\bar B_n).$ 
Such a reduction is rather simple and will be skipped. 
By the assumption of the lemma,  
$$
\frac{\sup_{k\geq 1}(\lambda_{k}^{(n)})^2 
{\mathbb E}\Bigl[\vartheta_n^2(a,k)\Bigr]}{\bar B_n^2}=
\frac{2\sup_{k\geq 1} (\lambda_{k}^{(n)})^2}{\bar B_n^2}
\to 0.
$$
It remains to check that the Lindeberg 
condition holds. To this end, note that 
$$
|\vartheta_n(a,k)| \leq 
\max\Bigl(\sqrt{2} |\eta_{k}^{(n)}||\tilde \eta_{k}^{(n)}|, \sqrt{2}|\tilde \eta_{k}^{(n)}||\bar \eta_{k}^{(n)}|, |(\eta_{k}^{(n)})^2-1|, |\tilde \eta_{k}^{(n)})^2-1|, |(\bar \eta_{k}^{(n)})^2-1|\Bigr)
$$
and observe that the random variables involved in the maximum in the right hand side 
are sub-exponential. This easily implies the following bound on the tails of $\vartheta_n(a,k)$
$$
{\mathbb P}\{|\vartheta_n(a,k)| \geq t\}\leq 5 e^{-ct}, t\geq 0
$$ 
that holds with some numerical constant $c>0$ and for all $a\in {\mathbb R}^5, |a|=1$ and all 
$k\geq 1.$ This bound also implies that ${\mathbb E}|\vartheta_n(a,k)|^4\leq C,$ $a\in {\mathbb R}^5, 
\|a\|=1$
for some numerical constant $C>0.$
Therefore, for all $\tau>0,$ we have
\begin{align*}
&\frac{\sum_{k\geq 1} (\lambda_k^{(n)})^2 
{\mathbb E}\left[ \vartheta_n^2(a,k) I\biggl(\lambda_k^{(n)} |\vartheta_n(a,k)|\geq \tau 
\bar B_n\biggr)\right]}{\bar B_n^2}\\
&\hspace{0.5cm} \leq \frac{1}{\bar B_n^2}\sum_{k\geq 1} \left( \lambda_k^{(n)} \right)^2 \mathbb{E}^{1/2}|\vartheta_n(a,k)|^4 \mathbb{P}^{1/2}\left( \lambda_k^{(n)} |\vartheta_n(a,k)|\geq \tau 
\bar B_n  \right)\\
&\hspace{0.5cm} 
\lesssim \frac{\sum_{k\geq 1}\Bigl(\lambda_k^{(n)}\Bigr)^2}{\bar B_n^2}
\exp\biggl\{-\frac{c\tau \bar B_n}{2\sup_{k\geq 1} \lambda_k^{(n)}}\biggr\}
\lesssim \exp\biggl\{-\frac{c\tau \bar B_n}{2\sup_{k\geq 1} \lambda_k^{(n)}}\biggr\},
\end{align*}
which tends to $0$ as $n\to\infty$ (under the condition that $\frac{\bar B_n}{\sup_{k\geq 1}\lambda_k^{(n)}}\to \infty$).
\qed 
\end{proof}

Lemma \ref{joint-clt-theta} will be applied to the sequence of random vectors $\Theta^{(n)}.$
Under Assumption \ref{ass_sigma_n}, the condition of the lemma holds since 
$$
\frac{1}{\bar B_n}\sup_{s\neq r_n} \frac{\mu_s^{(n)}}{(\mu_{r_n}^{(n)}-\mu_s^{(n)})^2}
=\frac{2}{B_n} \sup_{s\neq r_n} \frac{\mu^{(n)}\mu_s^{(n)}}{(\mu_{r_n}^{(n)}-\mu_s^{(n)})^2}
\leq \frac{2}{B_n}\biggl(\frac{\|\Sigma^{(n)}\|_{\infty}}{\bar g^{(n)}}\biggr)^2 \to 0\ {\rm as}\ n\to\infty.
$$
Thus, Lemma \ref{joint-clt-theta} implies that $\frac{\Theta^{(n)}}{\bar B_n}\stackrel{d}{\longrightarrow} Z_5$ and, in view of (\ref{Xi-asympt}),
we also have that 
\begin{equation}
\label{ass-normal}
\frac{n\Xi^{(n)}}{B_n}\stackrel{d}{\longrightarrow} Z_5\ {\rm as}\ n\to\infty. 
\end{equation}

Under Assumption \ref{ass_sigma_n}, Lemma \ref{HS-conc-bd} easily implies that 
\begin{align}\label{interm-final1}
\frac{n}{B_n}\left(\|\hat P^{(n)} - P^{(n)}\|_2^2 - \mathbb E\|\hat P^{(n)} - P^{(n)}\|_2^2\right) = 
\frac{n}{B_n}\langle \Xi^{(n)},u\rangle + o_{\mathbb P}(1),\; u = (0,0,1,0,0).
\end{align}
Under the same assumption, Lemma \ref{lemma2} implies that  
\begin{align}\label{interm-final2}
\frac{n}{B_n}\left((1+\hat b^{(n)})^2  - (1+ b^{(n)})^2 \right) =  \frac{n}{B_n}\langle \Xi^{(n)},v\rangle  + o_{\mathbb P}(1),\; 
v = \left(\frac{1}{\sqrt{2}},0,-\frac{1}{2},-\frac{1}{2},0\right)
\end{align}
and
\begin{align}
\label{interm-final2'}
\frac{n}{B_n}\left( (1+\tilde b^{(n)})^2  - (1+ b^{(n)})^2 \right)= \frac{n}{B_n}\langle \Xi^{(n)},w\rangle  + o_{\mathbb P}(1),\; w= \left(0,\frac{1}{\sqrt{2}},0,-\frac{1}{2},-\frac{1}{2}\right).
\end{align}
It follows from the last two relationships that 
\begin{align}
\label{interm-final2_2'}
\frac{n}{B_n}\left((1+\hat b^{(n)})^2  - (1+ \tilde b^{(n)})^2 \right) =  
\frac{n}{B_n}\langle \Xi^{(n)},v-w\rangle  + o_{\mathbb P}(1). 
\end{align}

{\bf Proof of Theorem \ref{th_odin}}.
Note that
\begin{align}
\label{bias-obs}
\frac{n}{B_{n}}(\hat b^{(n)} -b^{(n)} ) = \frac{n}{B_{n}} \frac{(1+ \hat b^{(n)})^2  -(1+ b^{(n)})^2}{2+ \hat b^{(n)} +  b^{(n)}}.
\end{align}
Under Assumption  \ref{ass_sigma_n}, Proposition \ref{prop:tildetheta} implies that $|\hat b^{(n)} - b^{(n)}| = O_{\mathbb P} \left(\frac{\sqrt{\mathbf{r}(\Sigma^{(n)})}}{n}\right)$. Recall also that 
$|b^{(n)}| \lesssim \frac{\|\Sigma^{(n)}\|_{\infty}^2}{\bar g_r^2} \frac{\mathbf{r}(\Sigma^{(n)})}{n}$
(see bound (\ref{b_r_bd})).
Thus, under Assumption \ref{ass_sigma_n}, we get that $b^{(n)} = o(1)$ and $\hat b^{(n)} = o_{\mathbb{P}}(1).$ 
These facts along with 
representations (\ref{bias-obs}), (\ref{interm-final2}) and also with (\ref{ass-normal}) imply that 
$\frac{2n}{B_n}(\hat b^{(n)}-b^{(n)})$ converges in distribution 
to the same limit as $\frac{n}{B_n}\left((1+\hat b^{(n)})^2  - (1+ b^{(n)})^2 \right),$
which is the distribution of the random variable $\langle Z_5,w\rangle.$ Since $|w|=1,$ $\langle Z_5,w\rangle$
is a standard normal random variable, which completes the proof of Theorem \ref{th_odin}.  

{\bf Proof of Theorem \ref{th_tri}}. 
Recall that
\begin{align*}
\E \| \hat P^{(n)} - P^{(n)} \|_2^2 = -2 b^{(n)}
\end{align*}
(see (\ref{risk-bias})). 
The following representation holds:
\begin{align}
\label{data-driven-interm1}
&\frac{\|\hat P^{(n)} - P^{(n)} \|_2^2 + 2\hat b^{(n)} }{|(1+\hat b^{(n)})^2  - (1+ \tilde b^{(n)})^2|}\notag\\
&\hspace{1cm}= \frac{\|\hat P^{(n)} - P^{(n)} \|_2^2 + 2 b^{(n)} }{|(1+\hat b^{(n)})^2  - (1+ \tilde b^{(n)})^2|}  + \frac{2(\hat b^{(n)} - b^{(n)}) }{|(1+\hat b^{(n)})^2  - (1+ \tilde b^{(n)})^2|} \notag\\
 &\hspace{1cm}= \frac{\frac{n}{B_n}\left(\|\hat P^{(n)} - P^{(n)} \|_2^2  - \E \| \hat P^{(n)} - P^{(n)} \|_2^2\right) }{\left|\frac{n}{B_n}\left((1+\hat b^{(n)})^2  - (1+ \tilde b^{(n)})^2\right)\right|}  + \frac{\frac{2n}{B_n}\left(\hat b^{(n)} - b^{(n)}\right) }{\left|\frac{n}{B_n}\left((1+\hat b^{(n)})^2  - (1+ \tilde b^{(n)})^2\right)\right|}.
\end{align}
In view of (\ref{ass-normal}), (\ref{interm-final1}), (\ref{interm-final2_2'}) and the combination of 
(\ref{bias-obs}) with (\ref{interm-final2}),
we easily conclude that the sequence of random variables 
$$
\frac{\|\hat P^{(n)} - P^{(n)} \|_2^2 + 2\hat b^{(n)} }{|(1+\hat b^{(n)})^2  - (1+ \tilde b^{(n)})^2|}
$$ 
converges in distribution to
$
\frac{\langle Z_5,u+v\rangle}{|\langle Z_5,v-w\rangle|}.
$
Using Proposition \ref{Cauchy}, it is easy to show that $\frac{\langle Z_5,u+v\rangle}{|\langle Z_5,v-w\rangle|} \stackrel{d}{=} Y_{\frac{5}{6},\frac{\sqrt{47}}{6}}.$
This completes the proof of Theorem \ref{th_tri}.

{\bf Proof of Theorem \ref{th_dva}} is quite similar. 

\qed

\bibliographystyle{plain}
\bibliography{biblio_A}

\end{document}